\theoremstyle{definition}
\newtheorem{question}[theorem]{Question}
\newcommand{\comment}[1]{} 
\newcommand{\mt}[1]{{\text{\rm #1}}} 
\newcommand{\rh}{\hookrightarrow}
\newcommand{\boot}{\mathrm{boot}}
\newcommand{\con}{\mathrm{con}}
\newcommand{\pre}{\mathrm{pre}}
\newcommand{\std}{\mathrm{std}}
\newcommand{\Estd}{\mathrm{Estd}}
\newcommand{\tor}{\mathrm{torp}}
\newcommand{\att}{\mathrm{att}}
\newcommand{\spin}{\mathrm{spin}}
\newcommand{\MSpin}{\mathsf{MSpin}}
\newcommand{\Id}{\mathrm{Id}}
\newcommand{\fb}{\mbox{-}\mathrm{fb}}
\newcommand{\psc}{\mathrm{psc}}
\newcommand{\Z}{\mathbb{Z}} 
\newcommand{\CP}{\mathbb{CP}} 
\newcommand{\HP}{\mathbb{HP}}
\renewcommand{\max}{\operatorname*{\mt{max}}}
\renewcommand{\min}{\operatorname*{\mt{min}}}
\renewcommand{\lim}{\operatorname*{\mt{lim}}}
\renewcommand{\dim}{\operatorname{\mt{dim}}}
\newcommand{\Riem}{{\mathcal R}}
\newcommand{\p}{\partial}
\newcommand{\bord}{\rightsquigarrow}
\numberwithin{equation}{section}
\newtheorem{Theorem}{Theorem}[section]
\newtheorem{Corollary}[Theorem]{Corollary}
\newtheorem{Conjecture}[Theorem]{Conjecture}
\newtheorem{Lemma}[Theorem]{Lemma}
\newtheorem{Proposition}[Theorem]{Proposition}
 { \theoremstyle{definition}
\newtheorem{Definition}[Theorem]{Definition}
\newtheorem{Remark}[Theorem]{Remark} }
\begin{document}

\newcommand{\arXivNumber}{2005.03073}

\renewcommand{\thefootnote}{}

\renewcommand{\PaperNumber}{034}

\FirstPageHeading

\ShortArticleName{Homotopy Invariance of the Space of Metrics with Positive Scalar Curvature}

\ArticleName{Homotopy Invariance of the Space of Metrics\\ with Positive Scalar Curvature on Manifolds\\ with Singularities\footnote{This paper is a~contribution to the Special Issue on Scalar and Ricci Curvature in honor of Misha Gromov on his 75th Birthday. The full collection is available at \href{https://www.emis.de/journals/SIGMA/Gromov.html}{https://www.emis.de/journals/SIGMA/Gromov.html}}}

\Author{Boris BOTVINNIK~$^{\rm a}$ and Mark G.~WALSH~$^{\rm b}$}

\AuthorNameForHeading{B.~Botvinnik and M.~Walsh}

\Address{$^{\rm a)}$~Department of Mathematics, University of Oregon, Eugene, OR, 97405, USA}
\EmailD{\href{mailto:botvinn@uoregon.edu}{botvinn@uoregon.edu}}
\URLaddressD{\url{http://darkwing.uoregon.edu/~botvinn/}}

\Address{$^{\rm b)}$~Department of Mathematics and Statistics, Maynooth University, Maynooth, Ireland}
\EmailD{\href{mailto:Mark.Walsh@mu.ie}{Mark.Walsh@mu.ie}}

\ArticleDates{Received June 16, 2020, in final form March 24, 2021; Published online April 02, 2021}

\Abstract{In this paper we study manifolds, $X_{\Sigma}$, with fibred singularities, more specifically, a relevant space ${\mathcal R}^{\rm psc}(X_{\Sigma})$ of Riemannian metrics with positive scalar curvature. Our main goal is to prove that the space ${\mathcal R}^{\rm psc}(X_{\Sigma})$ is homotopy invariant under certain surgeries on~$X_{\Sigma}$.}

\Keywords{positive scalar curvature metrics; manifolds with singularities; surgery}

\Classification{53C27; 57R65; 58J05; 58J50}


\renewcommand{\thefootnote}{\arabic{footnote}}
\setcounter{footnote}{0}

\section{Introduction}\label{Intro}
\subsection{Existence of a psc-metric}
A classical result in this subject concerns the existence of metrics
of positive scalar curvature (psc-metrics) on a simply-connected
smooth closed manifold $M$. There are two cases here: either~$M$ is a
spin manifold or it is not. Recall that in the case when $M$ is spin,
there is an~index $\alpha(M)\in KO^{-n}$ of the Dirac operator valued
in real $K$-theory. Here is the result:
\begin{Theorem}[Gromov--Lawson~\cite{GL}, Stolz~\cite{St}]\label{thm:GL-Stolz}
Let $M$ be a smooth closed simply connected manifold of dimension $n\geq 5$.
\begin{enumerate}\itemsep=0pt
\item[$(i)$] If $M$ is spin, then $M$ admits a psc-metric if and only
 if the index $\alpha(M)\in KO^{-n}$ of the Dirac operator on $M$
 vanishes.
\item[$(ii)$] If $M$ is not spin, then $M$ always admits a psc-metric.
\end{enumerate}
\end{Theorem}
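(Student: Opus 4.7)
I would split the statement into an obstruction half (the forward direction of (i)) and a construction half (the converse of (i), together with (ii)). For the obstruction I invoke the Lichnerowicz--Hitchin argument: if $M^{n}$ is spin and carries a psc-metric $g$, then the Weitzenb\"ock identity $D^{2}=\nabla^{*}\nabla+\frac{1}{4}\scal_{g}$ for the Dirac operator forces $\ker D=0$, so Hitchin's $KO$-valued refinement of the index yields $\alpha(M)=0$.

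The construction half rests on the Gromov--Lawson surgery theorem: a psc-metric on $M$ can be extended across any surgery of codimension $\geq 3$. In the simply connected regime with $n\geq 5$, classical handle theory allows one to trade $1$- and $(n-1)$-handles, so any spin (resp.\ oriented) bordism $W$ from $M$ to $M'$ is built from handles of indices in $\{3,\dots,n-2\}$, with all attaching surgeries and their duals of codimension $\geq 3$. Consequently, whether $M$ admits a psc-metric depends only on its spin (resp.\ oriented) bordism class, and it suffices to exhibit one psc-representative of each relevant class.

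For (ii), I would use generators of $\Omega_{n}^{SO}$ taken to be products and projective bundles involving the $\CP^{2k}$'s, each carrying a Fubini--Study psc-metric; torsion and low-dimensional non-spin classes are then covered by explicit constructions, for example connect-summing with non-spin generators whose psc-metrics are known. For the converse of (i), the task is to show that every class in $\ker\bigl(\alpha\colon\Omega_{n}^{\mathrm{spin}}\to KO^{-n}\bigr)$ has a psc-representative. This is Stolz's theorem: $\ker\alpha$ is generated by total spaces of $\HP^{2}$-bundles with structure group $\mathrm{PSp}(3)$, and any such bundle admits a psc-metric by canonical variation of a submersion metric, shrinking the $\HP^{2}$-fibre (whose symmetric metric has $\scal>0$) until its curvature dominates the base and connection curvatures.

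The principal obstacle is precisely Stolz's bordism-theoretic step: it requires computing $\ker\alpha$ via deep $KO$-theoretic splittings (of Anderson--Brown--Peterson type) and pinning down a convenient module-generating set. Once these generators are known to carry psc-metrics, the Gromov--Lawson surgery theorem mechanically propagates psc to every class in $\ker\alpha$, closing both (i) and (ii).
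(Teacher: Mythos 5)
The paper does not prove this theorem at all; it is quoted as classical background attributed to Gromov--Lawson~\cite{GL} and Stolz~\cite{St}, so the only meaningful comparison is with those references. Your outline --- the Lichnerowicz/Hitchin vanishing argument for the obstruction, the Gromov--Lawson codimension-$\geq 3$ surgery theorem plus handle trading to reduce existence to a bordism-class question, psc-representatives of the generators of oriented bordism in the non-spin case, and Stolz's identification of $\ker\alpha$ via $\HP^{2}$-bundle transfers with the fibre-shrinking submersion construction --- is precisely the argument of the cited sources, and you correctly isolate the genuinely hard step (the homotopy-theoretic computation of $\ker\alpha$) as the one that cannot be made elementary.
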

We denote by $\Riem^{\psc}(M)$ the space of psc-metrics on $M$. Recall
that one of the major tools used to prove Theorem~\ref{thm:GL-Stolz}
is the surgery technique due to Gromov and Lawson (proved
independently by Schoen and Yau). In particular, Gromov--Lawson
observed that a psc-metric survives surgeries of codimension at least
three (such surgeries are called \emph{admissible}). It turns out that
the homotopy type of the space $\Riem^{\psc}(M)$ is invariant under
such surgeries; see~\cite{Ch,EF,W1}.

\subsection{Existence of a psc-metric on a manifold with Baas--Sullivan
 singularities}
We start with the simplest case, where the geometrical picture is
transparent. Let $(L, g_L)$ be a~closed Riemannian manifold, in which
the metric $g_L$ is assumed to have zero scalar curvature. Let $Y$ be
a closed smooth manifold, such that the product $Y\times L$ is a
boundary of a smooth manifold $X$, i.e., $\p X= Y\times
L$. Here is a natural geometrical question:
\vspace{1mm}

\begin{question}
Does there exist a psc-metric $g_Y$ on $Y$, such
 that the product metric $g_Y+ g_L$ on~$\p X = Y\times L$ can be
 extended (being a product near $\p X$) to a psc-metric
 $g_X$ on $X$?
 \end{question}

It is convenient to denote $\beta X := Y$ (which is sometimes called
the {\em Bockstein manifold of~$X$}). Thus, for now, the boundary of~$X$ forms the total space of a trivial bundle,
\begin{gather*}
\p X=\beta X\times L\rightarrow \beta X.
\end{gather*}
In this case, a {\em manifold with
Baas--Sullivan singularities of the type $L$ $($or just
$L$-singularities$)$}, and denoted $X_{\Sigma}$, is obtained as
\begin{gather*}
 X_{\Sigma}:= X\cup_{\p X} -\beta X \times C(L),
\end{gather*}
where $C(L)$ is a cone over $L$ (and the minus sign represents a
change of orientation in the case when manifolds are oriented).

Assume for a moment that there is a psc-metric $g_X$ on $X$ such that
$g_X$ is a product metric of~the form $g_{\p X}+{\rm d}t^{2}$, near $\p X$,
where the boundary metric $g_{\p X}:=g_X|_{\p X}$ satisfies $g_{\p
 X}=g_{\beta X}+ g_L$. Then we would like to extend the metric
$g_{X}$ to a psc-metric (with singularities) on $X_{\Sigma}$. (For
brevity we will use the term metric here with the understanding that
on $X_{\Sigma}$ (or~on~$C(L)$), metrics are necessarily singular.) To
do this, we begin by extending the scalar-flat metric, $g_{L}$, to a
scalar-flat metric on the cone $C(L)$, which we denote
$g_{C(L)}$. There is an obvious way to do this and details are
provided in the appendix.

To obtain the desired metric on $X_{\Sigma}$, it will be necessary to
attach this conical metric, ${g_{C(L)}}$, to~a~cylindrical metric
${\rm d}t^{2}+g_{L}$. While these metrics do not attach smoothly, it is
possible to~bend one end of the cylinder inwards so that it matches
the conical metric. This process creates some positive but no negative
scalar curvature. With this in mind, we construct an~``attaching"
metric of non-negative scalar curvature on the cylinder $X\times
[0,1]$ which takes the form ${\rm d}t^{2}+g_{L}$ at one end and smoothly
attaches to ${g_{C(L)}}$ at the other. This metric is denoted
$g_{\att(L)}$ and~details of~its construction are given in the
appendix.

\begin{figure}[!htbp]
\begin{center}
\vspace{5mm}
\begin{picture}(0,0)
\includegraphics{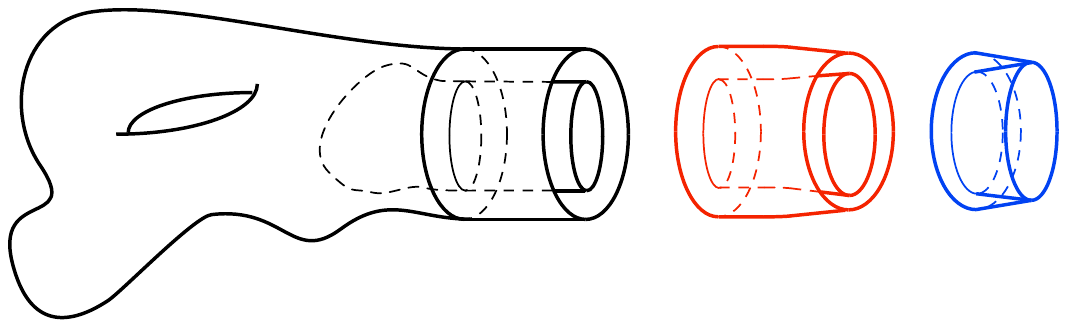}%
\end{picture}
\setlength{\unitlength}{3947sp}
\begingroup\makeatletter\ifx\SetFigFont\undefined%
\gdef\SetFigFont#1#2#3#4#5{%
 \reset@font\fontsize{#1}{#2pt}%
 \fontfamily{#3}\fontseries{#4}\fontshape{#5}%
 \selectfont}%
\fi\endgroup%
\begin{picture}(5079,1559)(1902,-7227)
\put(3800,-7000){\makebox(0,0)[lb]{\smash{{\SetFigFont{10}{8}{\rmdefault}{\mddefault}{\updefault}{\color[rgb]{0,0,0}$g_{Y}+g_{L}+{\rm d}t^{2}$}%
}}}}
\put(5100,-7000){\makebox(0,0)[lb]{\smash{{\SetFigFont{10}{8}{\rmdefault}{\mddefault}{\updefault}{\color[rgb]{1,0,0}$g_{Y}+g_{\att(L)_{\epsilon}}$}%
}}}}
\put(6300,-7000){\makebox(0,0)[lb]{\smash{{\SetFigFont{10}{8}{\rmdefault}{\mddefault}{\updefault}{\color[rgb]{0,0,1}$g_{Y}+g_{C(L)}$}%
}}}}
\put(2200,-6000){\makebox(0,0)[lb]{\smash{{\SetFigFont{10}{8}{\rmdefault}{\mddefault}{\updefault}{\color[rgb]{0,0,0}$(X, g_{X})$}%
}}}}
\end{picture}%
\end{center}\vspace*{-2mm}
\caption{Gluing together the components, $g_X$, $g_{Y}+g_{\att(L)}$ and
 $g_{Y}+g_{C(L)}$ of the metric $g_{X_\Sigma}$.}
\label{conemetric}
\end{figure}
\begin{Remark}\label{rem:well-adp}
 We emphasize that, according to our construction, the metrics
 $g_{\att(L)}$ and ${g_{C(L)}}$ are both invariant under the action
 of the isometry group $\mathrm{Iso}(g_L)$ of the metric $g_L$. In
 our work, the role of $L$ will be played by either the circle $S^1$
 (with $U(1)$ as the isometry group), some other appropriate Lie
 group or a homogeneous space, such as $S^n$ or $\HP^n$, equipped
 with the natural metric $g_L$ of non-negative constant scalar
 curvature.
\end{Remark}
Returning now to the manifold $X_\Sigma$, we
define a metric $g_{X_\Sigma}$ by
\begin{gather*}
g_{X_{\Sigma}}:= g_X\cup (g_{Y}+g_{\att(L)})\cup (g_Y+g_{C(L)}),
\end{gather*}
obtained by gluing in the obvious way; see Figure~\ref{conemetric}.
The following fact is an easy consequence of Proposition~\ref{attachprop}.
\begin{Proposition}
The construction of $g_{X_{\Sigma}}$ above yields a smooth $($singular$)$
metric of non-negative scalar curvature on~$X_{\Sigma}$.
\end{Proposition}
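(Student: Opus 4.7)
The plan is to verify the two claims separately: smoothness (as a singular metric in the appropriate sense) and non-negativity of scalar curvature. Both will reduce, after bookkeeping, to the product formula for scalar curvature together with the properties of $g_{\att(L)}$ and $g_{C(L)}$ established in the appendix (in particular Proposition~\ref{attachprop}).

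First I would address the gluing. The manifold $X_{\Sigma}$ is decomposed into three regions: the original manifold $X$, the cylindrical collar $\beta X \times [0,1] \times L$ carrying $g_{Y}+g_{\att(L)}$, and the cone piece $\beta X \times C(L)$ carrying $g_{Y}+g_{C(L)}$. By hypothesis $g_{X}$ has the product form $g_{\beta X}+g_L+\dif t^{2}$ near $\partial X$, so it glues smoothly to $g_{Y}+g_{\att(L)}$ across the collar end where $g_{\att(L)}=g_{L}+\dif t^{2}$. On the other end of the collar, $g_{\att(L)}$ is designed to match $g_{C(L)}$ smoothly, by the construction described in the appendix. Thus $g_{X_{\Sigma}}$ is a well-defined smooth Riemannian metric away from the cone apex $\beta X \times \{\text{apex}\}$, which is precisely the singular locus of $X_{\Sigma}$.

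Next I would compute scalar curvature on each region. On $X$ itself, $\scal(g_X)>0$ by assumption. On the collar and cone pieces the metric is a Riemannian product, so the product formula gives
\begin{gather*}
\scal\bigl(g_{Y}+g_{\att(L)}\bigr) \;=\; \scal(g_{Y})+\scal(g_{\att(L)}), \qquad
\scal\bigl(g_{Y}+g_{C(L)}\bigr) \;=\; \scal(g_{Y})+\scal(g_{C(L)}).
\end{gather*}
Since $g_Y$ is a psc-metric (so $\scal(g_Y)>0$), it suffices to know that $\scal(g_{\att(L)})\geq 0$ and $\scal(g_{C(L)})\geq 0$; both are guaranteed by Proposition~\ref{attachprop} (the attaching metric is designed to create positive but no negative curvature, and the model scalar-flat cone is scalar-flat away from the apex since $g_L$ is scalar-flat). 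Hence $\scal(g_{X_{\Sigma}})\geq 0$ on each piece, and strict positivity in fact holds on the collar and cone regions.

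The main (and essentially only) obstacle is the compatibility at the two gluing interfaces. This is not a routine matching problem because the conical and cylindrical models are not naturally isometric in a neighborhood of the junction; the role of $g_{\att(L)}$ is precisely to interpolate between them while preserving $\scal\geq 0$. All of this technical content is packaged into Proposition~\ref{attachprop}, which the present proposition is then a direct corollary of once the product formula above is applied.
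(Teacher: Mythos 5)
Your proposal is correct and follows the same route as the paper, which simply observes that the statement is an easy consequence of Proposition~\ref{attachprop}: your argument is exactly the intended elaboration, namely that the three pieces match smoothly at the interfaces by the boundary form of $g_X$ and the design of the transition function $a$, and that the product formula $\scal(g_Y+g_{\att(L)})=\scal(g_Y)+\scal(g_{\att(L)})$ together with $\scal(g_{\att(L)})\geq 0$ and $\scal(g_{C(L)})=0$ gives non-negativity. The only cosmetic remark is that positivity of $\scal(g_Y)$ need not be a separate hypothesis, since it already follows from $g_X$ being psc on the collar where it equals $g_Y+g_L+\dif t^2$ with $g_L$ scalar-flat.
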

The above construction motivates the notion of a well-adapted metric
on $X_{\Sigma}$. Before stating~it, we fix a scalar-flat metric $g_L$
on $L$ and equip $X$ with a boundary collar $c\colon\p X\times
[-1,1]\rightarrow X$, where $c(\p X\times \{-1\})=\p X$.

We say that a metric $g$ on $X_{\Sigma}$ is a
\emph{well-adapted Riemannian metric on $X_{\Sigma}$} if
\begin{enumerate}\itemsep=0pt
\item[$(i)$] the restriction $g|_{X}$ satisfies the following conditions
 on the collar $c$:
\begin{gather*}
c^{*}g|_{\p X\times [0,1]}=g_{\beta X}+g_{L}+{\rm d}t^{2} \qquad\text{and}\qquad
c^{*}g|_{\p X\times [-1,0]}=g_{\beta X}+g_{\att(L)},
\end{gather*}
\item[$(ii)$] the restriction $g|_{\beta X \times C(L)}$ splits as a
 product-metric $g|_{\beta X \times C(L)} = g_{\beta X}+ g_{C(L)}$.
\end{enumerate}
We denote by $\Riem(X_{\Sigma})$ the space of all well-adapted
Riemannian metrics on $X_{\Sigma}$, and by $\Riem^{\psc}(X_{\Sigma})$
its subspace of psc-metrics. Thus, Question 1 above is equivalently
asking whether the space $\Riem^{\psc}(X_{\Sigma})$ is non-empty.
This existence question was addressed and even affirmatively resolved
for some particular examples of the singularity types $L$ (provided
that all manifolds involved are spin and both $X$ and $\beta X$ are
simply-connected; see~\cite{B1}).

There is a particularly interesting example here.
Let us consider spin manifolds, and choose $L=S^1$ with a non-trivial
spin structure, so that $L$ represents the generator \mbox{$\eta\in
\Omega^{\spin}_1=\Z_2$}. We~denote by $\Omega^{\spin,\eta}_*(-)$ the
bordism theory of spin manifolds with $\eta$-singularities, and by~$\MSpin^{\eta}$ the corresponding representing spectrum. We~refer to~\cite{B1}
for details on cobordism theory $\Omega^{\spin,\eta}_*(-)$.

It turns out, there exists a Dirac operator on spin-manifolds with
$\eta$-singularities. Furthermore, there is a natural transformation
$\alpha^{\eta}\colon \Omega^{\spin,\eta}_*\to KO_*^{\eta}$ which evaluates
the index of that Dirac operator, where the ``$K$-theory with
$\eta$-singularities'' $KO_*^{\eta}(-)$ coincides with the usual complex
$K$-theory. Here
is the result from~\cite{B1}:
\begin{Theorem}[Botvinnik~\cite{B1}]\label{Th1}
	Let $X$ be a simply connected spin manifold with nonempty
 $\eta$-singularity of dimension $n\geq 7$. Then $X$ admits a
 metric of positive scalar curvature if and only if
 $\alpha^{\eta}([X])=0$ in the group $KO_n^{\eta} \cong KU_n$.
\end{Theorem}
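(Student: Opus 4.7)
The plan is to follow the Gromov--Lawson--Stolz paradigm adapted to the singular setting. For the necessity direction, I would use the Lichnerowicz--Schrödinger identity for the Dirac operator on spin manifolds with $\eta$-singularities, whose existence is asserted just before the statement. If $X$ carries a well-adapted psc-metric $g \in \Riem^{\psc}(X_\Sigma)$, then the singular Dirac operator (with the cylindrical-plus-conical geometry dictated by the well-adapted structure along $\beta X \times C(L)$) is strictly positive on spinors, hence invertible, hence has vanishing index. This forces $\alpha^\eta([X]) = 0$ in $KO_n^\eta \cong KU_n$.

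For the sufficiency direction, the backbone is a surgery principle for $\eta$-manifolds: any admissible surgery preserves non-emptiness of $\Riem^{\psc}(X_\Sigma)$. Here ``admissible'' means codimension at least three, performed either on the interior stratum $X$ away from $\beta X$, or on the Bockstein manifold $\beta X$ itself with the $L$-factor carried along. The former is a direct application of the original Gromov--Lawson construction in a region where the well-adapted form is irrelevant. The latter is the delicate part: one performs the Gromov--Lawson handle attachment in an $\mathrm{Iso}(g_L)$-equivariant manner on $\beta X$, so that the result continues to split as a product with $g_L$ along the collar and with $g_{\att(L)}$, $g_{C(L)}$ across the singular cone. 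The fact that $g_L$, $g_{\att(L)}$ and $g_{C(L)}$ are all $\mathrm{Iso}(g_L)$-invariant, combined with the product structure in the definition of a well-adapted metric, is precisely what makes this equivariant extension possible.

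Given the surgery theorem, the argument reduces to bordism theory for $\Omega^{\spin,\eta}_*$. Following Stolz's strategy, one identifies the kernel of $\alpha^\eta\colon \Omega^{\spin,\eta}_n \to KO_n^\eta$ with a set of bordism classes represented by total spaces of $\HP^2$-bundles (or $\eta$-singularity analogues built fibrewise from such bundles), each of which admits a psc-metric via the standard fibrewise Gromov--Lawson argument using the positive scalar curvature of the fibre. Simple connectivity of $X$ and $\beta X$, together with the dimension hypothesis $n \geq 7$ (so that $\dim \beta X \geq 6$ and Haefliger's embedding theorem applies to trade handles in codimension at least three), allow one to realize any $\eta$-bordism to such a generator by a sequence of admissible surgeries. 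Combining this with the surgery theorem propagates a psc-metric from the generators back to $X_\Sigma$.

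The main obstacle is the equivariant surgery step along $\beta X$: one must check that the classical Gromov--Lawson deformation can be performed inside the $\mathrm{Iso}(g_L)$-equivariant category and that the resulting metric still attaches smoothly to both the cylindrical piece $g_Y + g_L + \diff t^2$ and the conical piece $g_Y + g_{C(L)}$ through the attaching region $g_{\att(L)}$. Once this is established --- making essential use of Proposition~\ref{attachprop} and the invariance noted in Remark~\ref{rem:well-adp} --- the bordism-theoretic reduction is largely formal.
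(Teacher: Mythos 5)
First, a point of context: the paper does not prove this statement --- Theorem~\ref{Th1} is quoted from Botvinnik's earlier work \cite{B1} --- so your proposal must be measured against the argument there. Your overall architecture (a Lichnerowicz-type obstruction for the singular Dirac operator, a surgery theorem for $\eta$-manifolds, and a bordism-theoretic reduction to generators in the style of Stolz) is indeed the architecture of \cite{B1}.

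The genuine gap is in the bordism-theoretic step, which is where the actual content of the theorem lies. You assert that, ``following Stolz's strategy,'' one identifies $\ker\big(\alpha^{\eta}\colon\Omega^{\spin,\eta}_n\to KO^{\eta}_n\big)$ with classes represented by total spaces of $\HP^2$-bundles or ``$\eta$-singularity analogues'' thereof. But Stolz's theorem concerns $\ker\big(\alpha\colon\Omega^{\spin}_*\to KO_*\big)$, and transporting it to the singular bordism theory is not formal: it is precisely what \cite{B1} proves. The required input is the pair of compatible Bockstein exact triangles
\begin{gather*}
\Omega^{\spin}_{n-1}\xrightarrow{\ \cdot\eta\ }\Omega^{\spin}_{n}\longrightarrow\Omega^{\spin,\eta}_{n}\xrightarrow{\ \beta\ }\Omega^{\spin}_{n-2}\xrightarrow{\ \cdot\eta\ }\Omega^{\spin}_{n-1},
\qquad
KO_{n}\longrightarrow KU_{n}\longrightarrow KO_{n-2}\xrightarrow{\ \cdot\eta\ }KO_{n-1},
\end{gather*}
intertwined by $\alpha$ and $\alpha^{\eta}$ (this is also where the identification $KO^{\eta}_*\cong KU_*$ comes from). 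Given $[X]\in\ker\alpha^{\eta}$ one must first deduce $\alpha([\beta X])=0$, apply Stolz on the Bockstein level, and then solve a lifting problem to produce an $\eta$-bordism from $X_{\Sigma}$ to a representative visibly carrying an adapted psc-metric; this diagram chase, together with the verification that the resulting generators of $\ker\alpha^{\eta}$ really do admit adapted psc-metrics near the singular stratum, is the heart of the proof and is absent from your sketch. As written, ``realize any $\eta$-bordism to such a generator by admissible surgeries'' restates the theorem rather than proving it.

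Three smaller points. The dimension count is off: $\dim\beta X=n-2\geq 5$ (not $\geq 6$) when $n\geq 7$, and $5$ is exactly the threshold needed for the simply-connected surgery argument on the Bockstein. The emphasis on $\mathrm{Iso}(g_L)$-equivariance is misplaced here: for Baas--Sullivan $\eta$-singularities the boundary is the trivial bundle $\beta X\times S^1$, so a Gromov--Lawson deformation on $\beta X$ is simply crossed with $\big(S^1,g_L\big)$ and attaches to $g_{\att(L)}$ and $g_{C(L)}$ via Proposition~\ref{attachprop}; equivariance only becomes an issue for the fibred singularities treated in the present paper. Finally, in the necessity direction, ``strictly positive on spinors'' conceals the real issue, namely the index theory of the Dirac operator at the cone over $S^1$: what makes that analysis work is the invertibility of the Dirac operator on the circle with its nonbounding spin structure, i.e., the hypothesis $[L]=\eta\neq 0$, which your argument never uses although the statement depends on it.
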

\subsection{Existence of a psc-metric on a manifold with fibred
 singularities} There are more general objects, namely, ``manifolds
with fibred singularities'' (or pseudo-mani\-folds with singularities of
depth one). Here again, we start with a manifold $X$ with boundary $\p
X\neq \varnothing$, which is the total space of a fibre bundle $\p X \to
\beta X$ with the fibre $L$. Usually the manifold $L$ is referred to
as the \emph{link}. To get \emph{geometrically interesting} objects,
we fix the following data on the manifold $L$:
\begin{enumerate}\itemsep=0pt
\item[$(a)$] a metric $g_L$ on $L$ of non-negative constant scalar
 curvature,
\item[$(b)$] a subgroup $G$ of the isometry group $\mathrm{Isom}(g_L)$
 of the metric $g_L$.
\end{enumerate}
In particular, if $G$ is a nontrivial subroup, the
 metric $g_L$ is assumed to be homogeneous. Henceforth, we assume
 that the bundle $\p X \to \beta X$ is a $G$-bundle; thus
the bundle $\p X \to \beta X$ is induced by a structure map $f\colon \beta
X \to BG$.

Let $C(L)$ be a cone over $L$ with a cone metric, $g_{C(L)}$, which
restricts to $g_{L}$ on the base and is scalar-flat. As~we mentioned
in Remark~\ref{rem:well-adp}, the attaching metric $g_{\att(L)}$ and
the cone metric~${g_{C(L)}}$ are both invariant under the action of
the isometry group $\mathrm{Isom}(g_L)$. In particular, the isometry
action of $G$ on $L$ extends automatically to $C(L)$ and this gives
rise to a fibre bundle $N(\beta X) \to \beta X$, obtained by
``inserting'' the cone $C(L)$ as a fibre with the same structure group~$G$. Thus, we obtain a {\em manifold with fibred singularities},
$X_{\Sigma}$, given as $X_{\Sigma}:=X\cup_{\p X} -N(\beta X)$. Such a
manifold is referred to as an {\em $(L,G)$-manifold.}

Now we consider spin $(L,G)$-manifolds $X_{\Sigma}:=X\cup_{\p X}
-N(\beta X)$; this means that we first fix a~spin structure on $L$,
and $X$ and $\beta X$ are given spin structures which are respected by
the bundle map $\p X\to \beta X$. We~will assume that the isometry
group $G$ preserves the fixed spin structure on $L$. For~short, we
call such a bundle map $\p X\to \beta X$ a \emph{geometric
 $L$-bundle}.

There is a natural concept of spin cobordism
 between $(L,G)$-manifolds. Namely, we say that $W_{\Sigma} \colon X_{\Sigma}\bord
 X_{\Sigma,1} $ is a \emph{spin $(L,G,\fb)$-cobordism between}
\begin{gather*}
 X_{\Sigma}=X\cup_{\p X} -N(\beta X) \qquad \mbox{and} \qquad X_{\Sigma,1}=X_1\cup_{\p
 X_1} -N(\beta X_1),
\end{gather*}
if $W_{\Sigma}:=W \cup_{\p^s W} -N(\beta
 W)$, where the boundary $\p W$ is decomposed as
\begin{gather*}
\p W = X \cup \p^s W \cup X_1,
\end{gather*}
and $\p^s W\colon \p X \bord \p X_1$ is a spin cobordism over a geometric
$L$-bundle map $p\colon \p^s W \to \beta W$ which restricts to given
geometric $L$-bundles $\p X \to \beta X$ and $\p X_1 \to \beta
X_1$. This gives a corresponding cobordism theory $\Omega_n^{\spin,
 (L,G)\fb}(\mbox{-})$; see~\cite{BPR,BPR2, BR} for more details.

It turns out there are several interesting cases when the Dirac
operator is well defined on~such spin $(L,G)$-manifolds, and the index
of such operator gives a homomorphism
\begin{gather*}
\alpha^{(L,G)\fb}\colon\ \Omega_n^{\spin, (L,G)\fb} \to KO_n^{(L,G)\fb},
\end{gather*}
where $KO_n^{(L,G)\fb}$ is an appropriate $K$-theory; see
\cite{BPR,BPR2,BR} and the examples below in this section.

We have to modify the above definition of well-adapted Riemannian
metric on $X_{\Sigma}$ (we give a~detailed definition in Section~\ref{prim}). Roughly, in this setting, a \emph{well-adapted
 Riemannian metric}~$g$ on $X_{\Sigma}=X\cup_{\p X} -N(\beta X)$ is a
regular Riemannian metric restricted to $X$ (satisfying certain
product conditions near the boundary), and the restriction metric
$g|_{N(\beta X)}$ is determined by~a~requi\-re\-ment that the projection
$N(\beta X) \to \beta X$ is a Riemannian submersion (which has a~structure group $G \subset \mathrm{Isom}(g_L)$) and with the cone
metric $g_{C(L)}$ on the fibre. We~denote by~$\Riem(X_{\Sigma})$ the
space of well-adapted Riemannian metrics on $X_{\Sigma}$, and by
$\Riem^{\psc}(X_{\Sigma})$ its subspace of~psc-metrics; see Section~\ref{welladaptsec} for a precise definition of this subspace. Below
we describe some interesting cases.

\medskip\noindent
{\bf 1.3.1.}
We assume that all $(L,G)$-manifolds are spin,
$L=S^1$ representing $\eta\in \Omega_1^{\spin}$, and $G=S^1$. We~obtain a corresponding bordism group $\Omega^{\spin,\eta\fb}_*$ of
such manifolds. Then there exists an~appropriate Dirac operator on
$X_{\Sigma}$, and index map $\alpha^{\eta\fb}\colon
\Omega^{\spin,\eta\fb}_*\to KO^{\eta\fb}_*$ evaluating the index of
that Dirac operator. Here is the existence result for psc-metrics in
that setting:

\begin{Theorem}[Botvinnik--Rosenberg~\cite{BR}]
Let $X_{\Sigma}= X\cup N(\beta X)$ be a simply connected spin manifold
with fibred $\eta$-singularity $($i.e., $X$ and $\beta X$ are
simply-connected and spin$)$ of dimension $n\geq 7$. Then
$\Riem^{\psc}(X_{\Sigma})\neq\varnothing $ if and only if
$\alpha^{\eta\fb}([X_{\Sigma}])=0$ in the group $KO^{\eta\fb}_n$.
\end{Theorem}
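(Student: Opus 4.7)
The plan is to mirror the Gromov--Lawson--Stolz template in the fibred singular setting, with the fibred-surgery theorem of the present paper playing the role of the classical Gromov--Lawson surgery lemma. The ``only if'' direction is an index obstruction; the ``if'' direction combines surgery invariance, cobordism reduction, and bordism-group generator analysis.

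\textbf{Necessity.} If $g\in\Riem^{\psc}(X_\Sigma)$, the spin $(S^1,S^1)\fb$-Dirac operator $D^{\eta\fb}$ on $X_\Sigma$ is assembled from the usual Dirac operator on the regular part $X$ and the induced operator on the singular piece $N(\beta X)$. A Lichnerowicz-type formula on each piece, together with the $G$-invariance of $g_{\att(L)}$ and $g_{C(L)}$ from Remark~\ref{rem:well-adp}, shows that $D^{\eta\fb}$ is self-adjoint with strictly positive square on well-adapted sections, so its kernel is trivial and $\alpha^{\eta\fb}([X_\Sigma])=0$. The subtle point is matching at the singular locus $\beta X$: one must check that the cone fibre $C(L)$ contributes no $L^2$-harmonic spinor, which follows because $g_L$ is scalar-flat with invertible Dirac operator on $L=S^1_\eta$ and the scalar-flat cone preserves the kernel structure through a separation-of-variables argument.

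\textbf{Sufficiency.} Assume $\alpha^{\eta\fb}([X_\Sigma])=0$. I would proceed in three steps. First, establish that $\Riem^{\psc}(X_\Sigma)$ is preserved under admissible spin $(S^1,S^1,\fb)$-surgeries, i.e.\ codimension $\geq 3$ surgeries on $X$ away from the singular locus, together with codimension $\geq 3$ surgeries on $\beta X$ lifted compatibly to $N(\beta X)$ via the $S^1$-bundle structure; this is precisely the content of the main surgery theorem of the present paper. Second, promote this to cobordism invariance: using $n\geq 7$ and the simple-connectedness of both $X$ and $\beta X$, any spin $(S^1,S^1,\fb)$-cobordism can be rearranged (by parametrised handle trading, as in Stolz) into a sequence of admissible handle attachments, so that $\Riem^{\psc}(-)\neq\varnothing$ depends only on the class $[X_\Sigma]\in\Omega_n^{\spin,\eta\fb}$. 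Third, exhibit explicit psc-representatives for generators of $\ker\alpha^{\eta\fb}\subset\Omega_n^{\spin,\eta\fb}$: classes represented by Riemannian $(S^1,S^1,\fb)$-submersions over a closed spin psc-base, with totally geodesic fibre $C(S^1)$ endowed with the scalar-flat cone metric $g_{C(L)}$, carry a well-adapted psc-metric by O'Neill's formula. Combining the second and third steps, any $X_\Sigma$ with vanishing $\alpha^{\eta\fb}$-invariant is spin-$(S^1,S^1,\fb)$-cobordant to such a psc-representative, and hence admits a well-adapted psc-metric itself.

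\textbf{Main obstacle.} The crux is the first step, the fibred surgery theorem. The Gromov--Lawson bend-and-glue must be performed $G$-equivariantly along the $S^1$-bundle, must interpolate smoothly through the attaching metric $g_{\att(L)}$ of Figure~\ref{conemetric} without destroying well-adaptedness near the cone tip, and must respect the codimension $\geq 3$ bound simultaneously on $X$ and, via the fibred lift, on the base $\beta X$. Once this is in place, the handle-trading of step two and the generator identification of step three are fairly standard extensions of the Stolz and Botvinnik arguments cited in the excerpt.
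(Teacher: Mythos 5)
First, a point of orientation: the paper does not prove this statement at all --- it is quoted as a known result from Botvinnik--Rosenberg~\cite{BR}, so there is no in-paper proof to compare against. Your overall template (Lichnerowicz obstruction for necessity; surgery invariance, bordism reduction, and generator analysis for sufficiency) is indeed the Gromov--Lawson--Stolz scheme that \cite{BR} follows, and step one is legitimately available: the surgery/homotopy-equivalence results of the present paper (or their existence-level precursors in \cite{BR}) do show that non-emptiness of $\Riem^{\psc}(X_\Sigma)$ is preserved under admissible surgeries on the interior of $X$ and on the structure map of $\beta X$.

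The genuine gap is in your step three, which you dismiss as ``fairly standard.'' Identifying psc-representable generators of $\ker\alpha^{\eta\fb}\subset\Omega^{\spin,\eta\fb}_n$ is the technical heart of the theorem, not a routine extension. The group $\Omega^{\spin,\eta\fb}_*$ sits in an exact triangle with $\Omega^{\spin}_*$ and $\Omega^{\spin}_*\big(BS^1\big)=\Omega^{\spin}_*(\CP^\infty)$, and a class in $\ker\alpha^{\eta\fb}$ is by no means of the single shape you describe (a $C(S^1)$-bundle over a closed psc base): such classes lie only in the image of the transfer $T$, whereas a general $X_\Sigma$ carries a two-stage obstruction --- first one must kill $\alpha(\beta X)$ to put a psc submersion metric on $\p X\to\beta X$, then one must solve a relative extension problem over $X$ --- and one must prove that the single invariant $\alpha^{\eta\fb}([X_\Sigma])\in KO^{\eta\fb}_n\cong KU_n$ captures exactly this composite obstruction. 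That requires a computation in the style of Stolz's $\HP^2$-transfer argument adapted to the singular bordism spectrum, which is where all the work in \cite{BR} lives. A smaller but real imprecision occurs in your necessity argument: on the conical and attaching regions the scalar curvature is only non-negative (the cone metric is scalar-flat), so the Lichnerowicz formula does not give a strictly positive square there; vanishing of the kernel near the singular stratum must instead be extracted from the invertibility of the Dirac operator on $\big(S^1,\eta\big)$ via the submersion/adiabatic structure, which you gesture at but do not actually supply.
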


\medskip\noindent
{\bf 1.3.2.} Again,
we assume that all $(L,G)$-manifolds are spin: here $L$ is equipped
with a metric $g_L$ with constant scalar curvature $s_L=\ell(\ell-1)$,
$\dim L=\ell$, and $G$ is a subgroup of the isometry group of the
metric $g_L$. We~assume $L=\p \bar L$, where $\bar{L}$ is a smooth
compact manifold with boundary, and the $G$-action on $L$ extends to a
$G$-action on $\bar L$. In this setting, an $(L,G)$-manifold
$X_{\Sigma}$ could be described as a triple $(X,\beta X, f)$, where
$X$ is a manifold with boundary $\p X$, which is a total space of a
fibre bundle $\p X\to \beta X$ (with a fibre $L$ and a structure group
$G$) given by a map $f\colon \beta X \to BG$. In this setting we have
indices $\alpha(\beta X)\in KO_{n-\ell-1}$ and
$\alpha_{\mathrm{cyl}}(X)\in KO_{n}$, where $n=\dim X$.

Here are the existence results:
\begin{Theorem}[Botvinnik--Piazza--Rosenberg~\cite{BPR}]
\label{thm:Lbdy}
Let $(X, \partial X, f)$ define a closed $(L,G)$-singular spin
manifold $X_\Sigma$. Assume that $X$, $\beta X$, and $G$ are all
simply connected, that $n-\ell\ge 6$, and~sup\-pose that $L$ is a spin
boundary, say $L=\partial\bar L$, with the standard metric $g_L$ on
$L$ extending to~a~psc-metric on $\bar L$, and with the $G$-action on
$L$ extending to~a~$G$-action on $\bar L$. Assume that the two
obstructions $\alpha(\beta X)\in KO_{n-\ell-1}$ and $\alpha_{{\rm
 cyl}}(X)\in KO_n$ both vanish. Then $X_\Sigma$ admits a~well-adapted psc-metric.
\end{Theorem}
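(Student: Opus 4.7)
The plan is to combine the Gromov--Lawson--Stolz existence theorem for closed simply connected spin manifolds with a Riemannian submersion construction on the cone bundle $N(\beta X)$, and a ``closing-up'' argument that uses the hypothesis $L=\p\bar L$ to translate the obstruction on $X$ into an $\alpha$-invariant on a closed manifold.

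First, I construct the two basic pieces independently. Since $\beta X$ is simply connected spin of dimension $n-\ell-1\ge 5$ and $\alpha(\beta X)=0$, Theorem~\ref{thm:GL-Stolz} provides a psc-metric $g_{\beta X}$. Let $P\to\beta X$ be the principal $G$-bundle classified by $f$ and fix a principal connection. Because $G$ preserves $g_L$ and, by Remark~\ref{rem:well-adp}, the cone metric $g_{C(L)}$, the associated bundle $N(\beta X)=P\times_G C(L)$ inherits a well-adapted Riemannian submersion metric built from $g_{\beta X}$, the connection, and $g_{C(L)}$. By the O'Neill formula the total scalar curvature equals the pulled-back base scalar, plus the (zero) fibre scalar, plus a term quadratic in the connection curvature and the cone radius; rescaling $g_L\mapsto\lambda^2 g_L$ for $\lambda$ sufficiently small makes these corrections arbitrarily small, yielding psc on the smooth part, while the singular apex is handled by the conical scalar-flat construction of the appendix.

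Next, I extend to $X$ by closing up with the filling. Using a $G$-invariant psc-extension $g_{\bar L}$ of $g_L$ on $\bar L$ (averaged over $G$ if necessary), form the associated $\bar L$-bundle $E(\bar L):=P\times_G\bar L$ over $\beta X$, whose boundary is the $L$-bundle $\p X\to\beta X$. The closed smooth spin manifold $\hat X:=X\cup_{\p X}E(\bar L)$ satisfies $\alpha(\hat X)=\alpha_{\mathrm{cyl}}(X)=0$ by the defining property of the cylinder index, and is simply connected under the given connectivity hypotheses (after replacing $\bar L$ by a $2$-surgered model if needed); hence Theorem~\ref{thm:GL-Stolz} yields a psc-metric on $\hat X$. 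Applying O'Neill again with $g_{\beta X}$, the connection on $P$, and $g_{\bar L}$ gives a model psc-metric $g_{E(\bar L)}$ on $E(\bar L)$ whose collar form is $g_{\beta X}+g_L+\dif t^2$, matching the boundary data of the previous step. A Gromov--Lawson surgery and psc-concordance argument on the simply connected manifold $\hat X$ then lets us deform the psc-metric on $\hat X$ to coincide with $g_{E(\bar L)}$ on $E(\bar L)$; restricting to $X$ yields a psc-metric on $X$ with the required collar. Finally, gluing this metric on $X$ to the attaching cylinder $g_{\beta X}+g_{\att(L)}$ and the cone-bundle metric on $N(\beta X)$ in the pattern of Figure~\ref{conemetric} assembles a well-adapted psc-metric on $X_\Sigma$.

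The main obstacle is the matching step: Theorem~\ref{thm:GL-Stolz} produces \emph{some} psc-metric on $\hat X$, not one prescribed on a given submanifold. Bridging this gap needs a relative surgery theorem, or equivalently a psc-bordism argument showing that $X_\Sigma$ is $(L,G,\fb)$-cobordant (respecting the fibred boundary data) to a standard representative for which the metric is transparent. A secondary difficulty is controlling the O'Neill corrections uniformly on the non-compact cone fibre; here the homogeneity of $g_L$ under $G$ and the attaching construction of the appendix are what make this feasible.
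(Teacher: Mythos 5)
First, a point of order: the paper does not prove Theorem~\ref{thm:Lbdy}. It is quoted from Botvinnik--Piazza--Rosenberg~\cite{BPR} as background for the main homotopy-invariance theorem, so there is no in-text proof to measure your argument against. Judged on its own terms, your outline does track the broad strategy one would expect (and which~\cite{BPR} essentially follows): get psc on $\beta X$ from Stolz using $\alpha(\beta X)=0$ and $\dim\beta X=n-\ell-1\ge 5$; lift by the O'Neill formula with shrunk fibres to handle $\p X$ and $N(\beta X)$; and cap off fibrewise by $E(\bar L)=P\times_G\bar L$ to convert $\alpha_{\rm cyl}(X)$ into the $\alpha$-invariant of a closed manifold. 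However, the proposal has a genuine gap at its central step.

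The gap is the one you flag yourself and then assert past: the passage from ``$\hat X$ admits \emph{some} psc-metric'' to ``$X$ admits a psc-metric restricting to the prescribed submersion metric on $\p X$''. The sentence claiming that ``a Gromov--Lawson surgery and psc-concordance argument on $\hat X$ lets us deform the psc-metric on $\hat X$ to coincide with $g_{E(\bar L)}$ on $E(\bar L)$'' is not a theorem: there is no general principle allowing one to isotope a psc-metric on a closed manifold until it agrees with a prescribed psc-metric on a codimension-zero submanifold with boundary. The vanishing of $\alpha_{\rm cyl}(X)$ is only the obstruction-theoretic half of the statement; the existence half is the hard content, and it requires either a genuinely relative Gromov--Lawson--Stolz theorem (extension of psc over a simply connected spin manifold with \emph{prescribed} psc boundary data, product near the boundary), or, as in~\cite{BPR}, a bordism argument inside $\Omega^{\spin,(L,G)\fb}_*$ using the surgery theorem for $(L,G)$-manifolds together with the exact triangle recalled in item~(3) of Section~1, which reduces $[X_\Sigma]$ to standard generators on which well-adapted psc-metrics are exhibited by hand. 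Two secondary problems: (i)~your fix for $\pi_1(\hat X)$ (``replace $\bar L$ by a $2$-surgered model'') is not available, because forming $E(\bar L)=P\times_G\bar L$ requires the $G$-action to extend to the modified filling, and interior surgery on $\bar L$ destroys the $G$-action; (ii)~$\alpha_{\rm cyl}(X)$ depends on the concordance class of the chosen psc boundary metric, so the identity $\alpha(\hat X)=\alpha_{\rm cyl}(X)$ must be justified by the relative index/gluing theorem applied with the same boundary metric that your O'Neill construction produces, which should be stated explicitly rather than attributed to ``the defining property of the cylinder index''.
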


\begin{Theorem}[Botvinnik--Piazza--Rosenberg~\cite{BPR}]
\label{thm:BaasHP2}
Let $(X, \beta X, f)$ define a closed $(L,G)$-singular spin manifold
$X_\Sigma$, with $L=\mathbb{HP}^{2k}$ and $G={\rm Sp}(2k+1)$, $n\ge 1$. Assume
that $\p X = \beta X \times L$, i.e., the $L$-bundle over $\beta X$ is
trivial, or in other words that the singularities are of Baas--Sullivan
type. Then if $X$ and $\beta X$ are both simply connected and
$n-8k\ge 6$, $X_\Sigma$ has an adapted psc-metric if and only if the
$\alpha$-invariants $\alpha(\beta X) \in KO_{n-8k-1}$ and
$\alpha_{{\rm cyl}}(X) \in KO_n$ both vanish.
\end{Theorem}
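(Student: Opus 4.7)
The plan is to prove each direction separately, with \emph{necessity} of the two obstructions following from an index-theoretic Lichnerowicz argument, and \emph{sufficiency} by a bordism-and-surgery reduction to a standard model where the metric is constructed explicitly.

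For necessity, assume $X_\Sigma$ carries a well-adapted psc-metric. A Lichnerowicz-type vanishing theorem for the Dirac operator on $\HP^{2k}$-fibered spin singularities, as constructed in \cite{BPR}, forces $\alpha^{(L,G)\fb}([X_\Sigma])=0$ in $KO_n^{(L,G)\fb}$. Because the singularity is of Baas--Sullivan type, $\p X = \beta X \times L$, this total index splits into two contributions: a relative piece $\alpha_{\mathrm{cyl}}(X)\in KO_n$ from the regular part, and a product piece $\alpha(\beta X)\cdot [\HP^{2k}]_{KO}$ from the conical part, where $[\HP^{2k}]_{KO}\in KO_{8k}$ is the Bott-type generator used in Stolz's proof of Theorem \ref{thm:GL-Stolz}. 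Since this class is a non-zero-divisor on the relevant $KO_*$-module, vanishing of the total forces vanishing of both factors independently.

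For sufficiency, assume both obstructions vanish. As $\beta X$ is simply connected, spin, and of dimension $n-8k-1\ge 5$ with $\alpha(\beta X)=0$, Theorem \ref{thm:GL-Stolz} yields a psc-metric $g_{\beta X}$ on $\beta X$. Equip $L=\HP^{2k}$ with the Fubini--Study metric rescaled so that $s_L=\ell(\ell-1)$ with $\ell=8k$; then the cone metric $g_{C(L)}$ is scalar-flat, and $g_{\beta X}+g_{C(L)}$ is a well-adapted psc-metric on the conical piece $-N(\beta X)=-\beta X\times C(L)$ with boundary trace $g_{\beta X}+g_L$. The remaining task is to extend $g_{\beta X}+g_L+\diff t^2$ across $X$ to a psc-metric $g_X$; the three pieces then glue as in Figure \ref{conemetric} via the attaching metric $g_{\beta X}+g_{\mathrm{att}(L)}$.

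The main difficulty is this relative extension on $X$, whose only obstruction is $\alpha_{\mathrm{cyl}}(X)$. My plan is to reduce, via admissible codimension-$\ge 3$ surgeries in the interior of $X$, to a standard relative spin bordism model in which the psc-extension is explicit -- for instance, a bordism rel boundary to the product $\beta X\times L\times I$, or one of the standard models that appear in Stolz's analysis of the spin bordism ring. The hypothesis $n-8k\ge 6$ and the simple-connectivity of $X$ and $\beta X$ supply the bordism-theoretic and surgery-theoretic freedom needed to carry out this reduction. The crucial input, and the main obstacle to handle in full detail, is the surgery invariance of $\mathcal{R}^{\psc}(X_\Sigma)$ that is the primary result of the present paper: it upgrades the abstract bordism reduction to a psc-bordism, so that the psc-metric constructed on the standard model is transported back to a well-adapted psc-metric on the original $X_\Sigma$.
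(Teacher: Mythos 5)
First, a structural point: Theorem~\ref{thm:BaasHP2} is not proved in this paper at all --- it is quoted as background from Botvinnik--Piazza--Rosenberg \cite{BPR} --- so there is no in-paper argument to measure yours against. Judged on its own terms, your proposal has a genuine error in the necessity direction and leaves the hard half of sufficiency unaddressed. The necessity of $\alpha(\beta X)=0$ cannot be obtained the way you describe. Your splitting of the ``total index'' into $\alpha_{\mathrm{cyl}}(X)$ plus $\alpha(\beta X)\cdot[\HP^{2k}]_{KO}$ with $[\HP^{2k}]_{KO}$ a non-zero-divisor fails at the last step: $\HP^{2k}$ is a compact homogeneous space of positive scalar curvature, so $\alpha\big(\HP^{2k}\big)=\hat{A}\big(\HP^{2k}\big)=0$ in $KO_{8k}\cong\Z$. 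The ``product piece'' therefore vanishes identically and places no constraint on $\alpha(\beta X)$; indeed, the entire reason $\HP^2$-type singularities are used (going back to Stolz) is that $\HP^2$-bundles are $\alpha$-invisible. The correct argument is elementary and geometric: a well-adapted metric restricts on the conical part $N(\beta X)=\beta X\times C(L)$ to the product $g_{\beta X}+g_{C(L)}$ with $g_{C(L)}$ scalar-flat, so positivity of the scalar curvature forces $g_{\beta X}$ itself to be a psc-metric, and Lichnerowicz--Hitchin then gives $\alpha(\beta X)=0$.

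For sufficiency, your use of Stolz on $\beta X$ and the assembly of the conical and attaching pieces are fine, but the genuinely difficult step --- extending the \emph{prescribed} boundary metric $g_{\beta X}+g_L+{\rm d}t^2$ over $X$ as a psc-metric, given only $\alpha_{\mathrm{cyl}}(X)=0$ --- is exactly what your sketch does not supply. ``Reduce by interior surgeries to a standard relative bordism model'' begs the question of why such a model, carrying an explicit psc extension of that particular boundary metric, exists in the correct relative bordism class; this is a relative Gromov--Lawson--Stolz theorem, and it is precisely where \cite{BPR} must exploit the special choice $L=\HP^{2k}$ and Stolz's identification of $\ker\alpha$ with the image of the $\HP^2$-bundle transfer. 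Finally, citing the homotopy invariance of $\Riem^{\psc}(X_{\Sigma})$ (the main theorem of the present paper) as ``the crucial input'' mislocates the difficulty: existence statements need only the classical surgery theorem for psc-metrics and its boundary version, and Theorem~\ref{thm:BaasHP2} is logically prior input to this paper rather than a consequence of it.
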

There are several other interesting cases and also more general
results when $X_{\Sigma}$ has non-trivial fundamental group; see
\cite{BPR2}. Now we are ready to address our main result concerning
homotopy invariance of the corresponding spaces of psc-metrics on
$X_{\Sigma}$.

\subsection{Main result}
The homotopy-invariance of certain spaces of psc-metrics is a crucial
property which has allowed detection of their non-trivial homotopy groups.
Let $M$ be a closed spin manifold. An important
consequence of the results due to Chernysh~\cite{Ch}, Walsh
\cite{W1,W3,W2} (see also recent work by Ebert and Frenck~\cite{EF})
is that the homotopy type of the space $\Riem^{\psc}(M)$ is an invariant
of the bordism class $[M]\in \Omega^{\spin}_n$ (provided $M$ is
simply-connected and $n\geq 5$).\footnote{There is also a similar
 result for non-simply connected manifolds.}

Notice that if $X_{\Sigma}=X\cup_{\p X} -N(\beta X)$ is a
pseudo-manifold with $(L,G)$-singularities equipped with structure
map $f\colon \beta X \to BG$, then there are two types of surgery possible on $X$:
\begin{enumerate}\itemsep=0pt
\item[$(i)$] a surgery on its ``resolution'', i.e., the interior
 $X\subset X_{\Sigma}$ away from the boundary $\p X$,
\item[$(ii)$] a surgery on the structure map $f\colon \beta X \to BG$.
\end{enumerate}
In case $(i)$ all constructions are the same as in the case of
closed manifolds; however, in case $(ii)$, we have to be more
careful. Indeed, let $\bar B\colon \beta X \bord \beta X_1$ be the trace
of a surgery on the map $f\colon \beta X \to BG$, with $\p \bar B = \beta
X \sqcup - \beta X_1$ and $\beta X_1$, the manifold
 obtained from $\beta X$ by surgery on~$f$. Then the map $f$ extends
to a map $\bar f\colon \bar B\to BG$ which gives a fibre bundle $\bar p\colon
Z\to \bar B$ with the fibre $L$.
This gives us a new manifold $X_1= X\cup_{\p X} Z$ with boundary $\p
X_1$, the total space over a new Bockstein $\beta X_1$ with the same
fibre $L$. Also we obtain a new conical part $N(\beta X_1)$ as
above. All of this results in a new pseudo-manifold
\begin{gather}\label{eq1a}
X_{\Sigma,1} = X_1\cup_{\p X_1}- N(\beta X_1), \qquad X_1= X\cup_{\p X} Z,
\end{gather}
with structure map $f_1= \bar f|_{\beta X_1}\colon \beta X_1 \to BG$.
Here is our main technical result:
\begin{Theorem}\label{thmA}
Let $X_{\Sigma}=X\cup_{\p X} -N(\beta X)$ be a pseudo-manifold with
$(L,G)$-singularities, with $\dim X = n$, $\dim L=\ell$.
\begin{enumerate}\itemsep=0pt
\item[$(i)$] Let $i\colon S^p\subset X$ be a sphere with trivial normal bundle,
 and $X_{\Sigma}'$ be the result of surgery on $X_{\Sigma}$ along
 $S^p$. Then if $2\leq p\leq n-3$, the spaces $\Riem^{\psc}(X_{\Sigma})$ and
$\Riem^{\psc}(X_{\Sigma}')$ are weakly homotopy equivalent.
\item[$(ii)$] Let $i\colon S^p\subset \beta X$ be a sphere with trivial
 normal bundle, and with $f\circ \iota\colon S^p \to BG$ homotopic to
 zero. Let $\bar B$ be a trace of the surgery along $S^p\subset \beta X$
 with $\p \bar B = \beta X\sqcup -\beta X_1$ and a~structure map $\bar f\colon
 \bar B\to BG$. Then if $2\leq p \leq n-\ell-3$, the spaces
 $\Riem^{\psc}(X_{\Sigma})$ and $\Riem^{\psc}(X_{\Sigma,1})$ are
 homotopy equivalent, where $X_{\Sigma,1}$ is given by~\eqref{eq1a}.
 \end{enumerate}
\end{Theorem}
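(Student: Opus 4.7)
Both parts follow the Chernysh--Walsh framework: one singles out a subspace of \emph{standardized} well-adapted psc-metrics which is shown to be weakly equivalent to $\Riem^{\psc}(X_\Sigma)$, and then identifies the standardized subspaces before and after surgery via a canonical construction using the surgery trace.

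For part (i), the sphere $S^p$ lies in the smooth interior of $X$, disjoint from the singular locus and from the collar region, and satisfies $n-p\geq 3$, the classical Gromov--Lawson codimension bound. Define $\Riem^{\psc}(X_\Sigma, \mathrm{std})$ to be the subspace of well-adapted psc-metrics which agree with a fixed Gromov--Lawson torpedo model on a chosen tubular neighborhood $S^p\times D^{n-p}\subset X$. Since this neighborhood lies in the smooth interior, the well-adapted conditions are unaffected by any modification supported there, so the Chernysh--Walsh proof (\cite{Ch,W1,EF}) transplants verbatim to show that the inclusion $\Riem^{\psc}(X_\Sigma,\mathrm{std})\hookrightarrow \Riem^{\psc}(X_\Sigma)$ is a weak equivalence. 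The same argument applied to $X_\Sigma'$, combined with gluing along the surgery trace, yields a canonical identification $\Riem^{\psc}(X_\Sigma,\mathrm{std})\cong \Riem^{\psc}(X_\Sigma',\mathrm{std})$, completing part (i).

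For part (ii), the surgery is in the Bockstein. The hypothesis $f\circ\iota\simeq 0$ trivializes the $G$-bundle $\partial X\to\beta X$ over $S^p$, so a neighborhood of $S^p$ in the full pseudo-manifold has the form $S^p\times D^{n-1-\ell-p}\times C(L)$ with the fixed scalar-flat metric $g_{C(L)}$ on the fibre direction. Well-adaptedness forces the restriction of any $g\in\Riem^{\psc}(X_\Sigma)$ to this region to take the product form $h+g_{C(L)}$, so the Gromov--Lawson deformation must be performed only on the base factor. Counting the radial direction of the cone, the effective codimension in $X_\Sigma$ is $n-p\geq 3$, which supplies precisely what the base-direction construction requires, while positivity of the total scalar curvature is preserved because $g_{C(L)}$ is scalar-flat. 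Define $\Riem^{\psc}(X_\Sigma,\mathrm{std}_\beta)$ to consist of those well-adapted psc-metrics whose restriction to the chosen neighborhood is $h_\mathrm{std}+g_{C(L)}$ with $h_\mathrm{std}$ a fixed torpedo model on the base, and run the same deformation-retract argument to show this inclusion is a weak equivalence. Extending across the trace $\bar B$ and its associated $L$-bundle $Z\to\bar B$ (which exists because $\bar f\colon\bar B\to BG$ extends $f$) gives the canonical identification with $\Riem^{\psc}(X_{\Sigma,1},\mathrm{std}_\beta)$ and completes the argument.

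The principal obstacle is the fibrewise deformation in part (ii): one must verify that the base-direction Chernysh--Walsh deformation extends consistently along the non-trivial $L$-bundle outside the trivializing neighborhood of $S^p$, using in a crucial way the $G$-invariance of $g_{\att(L)}$ and $g_{C(L)}$ recorded in Remark~\ref{rem:well-adp}, and that positivity of the total scalar curvature persists at every stage once the base deformation is coupled to the non-negative contributions of the fibre and cone directions. Managing the bookkeeping of well-adaptedness along the trace (in particular, the attaching-metric interpolation between $g_L+\dif t^2$ and $g_{C(L)}$) is the technical core of the argument.
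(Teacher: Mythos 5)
Part (i) is fine and matches the paper, which simply cites Chernysh. For part (ii), however, there is a genuine gap. Your central claim --- that well-adaptedness forces the restriction of $g\in\Riem^{\psc}(X_{\Sigma})$ over the trivializing neighbourhood of $S^p$ to be a literal product $h+g_{C(L)}$, so that positivity is automatic ``because $g_{C(L)}$ is scalar-flat'' --- is false. A well-adapted metric restricts over $\beta X$ as a Riemannian submersion determined by a base metric, a fibre scaling constant $\tau$ \emph{and} a horizontal distribution $\mathcal{H}$, and even over a region where the bundle is trivialized the distribution need not be the flat one. By O'Neill's formula the scalar curvature is $s_h\circ p+\tfrac{1}{\tau}s_L-\tau|A|^2$, so the integrability tensor $A$ contributes a negative term; positivity is \emph{not} inherited from scalar-flatness of the fibre, and your ``effective codimension $n-p\geq 3$ counting the radial cone direction'' is not the mechanism that saves it. The paper's entire submersion apparatus (the spaces $\mathcal{HD}(p)$, the scaling function $\bar{\tau}(h,\mathcal{H})$ of \eqref{taumin}, and Lemma~\ref{pscliftlemma}) exists precisely to control this term by shrinking the fibre metric uniformly over a compact family; you name this as the ``principal obstacle'' in your last paragraph but never supply the resolution, so the core of the proof is missing.

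Architecturally you also diverge from the paper in a way worth noting. You propose to standardize directly inside $\Riem^{\psc}(X_{\Sigma})$ and identify standardized subspaces across the surgery. The paper instead exploits that $\mathrm{res}_{\Sigma}\colon\Riem^{\psc}(X_{\Sigma})\to\Riem^{\psc}(\beta X)$ is a Serre fibration with fibre $\Riem_{\mathcal{S}}^{\psc}(X,\p X)_{h^{\beta}_0}$, proves the weak equivalence on bases by the classical surgery theorem, proves it on fibres by the new Theorem~\ref{thm-fb-sing} (whose proof lifts Walsh's deformations on $\bar B$ to the total space $\bar E$ via Lemma~\ref{pscliftlemma}, with the boot-metric standardization happening on the \emph{base} and then being lifted), and concludes for the total spaces from the map of fibrations. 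If you want to salvage your direct approach, you would still have to (a) replace ``product form'' by ``submersion form with controlled $\tau$ and $\mathcal{H}$'' in the definition of your standardized subspace, (b) prove the deformation retraction by lifting base-level Gromov--Lawson isotopies through the submersion with a uniform $\bar{\tau}_{\min}$ over compact families, and (c) verify that the gluing across the trace bundle $Z\to\bar B$ produces a well-adapted psc-metric, which again requires an admissible horizontal distribution and the rescaling argument. In short, the skeleton is plausible but the load-bearing analytic step is absent.
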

\begin{Remark}\label{Part(i)A}
As it deals with surgery on the interior, part~$(i)$ of Theorem~\ref{thmA}
above follows directly from Chernysh's theorem~\cite{Ch}.
Our contribution in this paper is in proving part~$(ii)$.
\end{Remark}
Theorem~\ref{thmA} could be applied to a variety of interesting examples.
Among these are:
\begin{enumerate}\itemsep=0pt
\item[{\rm (1)}] Let $L=\left<k\right>$ be the set of $k$ points, and
 let $G=\Z_k$ be its ``isometry group''. Then a~$(\left<k\right>\!,\Z_k)$-manifold $X_{\Sigma}=X\cup_{\p X} -N(\beta
 X)$ is assembled out of a manifold $X$ with boundary $\p X$ equipped
 with a free $\Z_k$-action, and a structure map $\beta X \to B\Z_k$
 classifies the corresponding $k$-folded covering $\p X \to \beta X=
 \p X/\Z_k$. Here $N(\beta X)$ is given by inserting the cone
 $C\!\left<k\right>$ instead of $\left<k\right>$ in the fibre bundle
 $\p X \to \beta X$. Assuming that all manifolds are spin, we obtain
 corresponding bordism groups
 $\Omega^{\spin,(\left<k\right>\!,\Z_k)\fb}_*$ and the corresponding
 transformation $\alpha^{(\left<k\right>\!,\Z_k)\fb}\colon
 \Omega^{\spin,(\left<k\right>\!,\Z_k)\fb}_* \to KO_*(B\Z_k)$ which
 evaluates the index of the corresponding Dirac operator; see~\cite{BR} for details.
\item[{\rm (2)}] Let $\eta\in \Omega_1^{\spin}$ be as above, i.e., $[L]=\eta$ and $G=S^1$. Then, similarly, we arrive at the bordism
 groups $\Omega^{\spin,\eta\fb}_*$ and the index map
 $\alpha^{\eta\fb}\colon \Omega^{\spin,\eta\fb}_*\to KO^{\eta\fb}_*$, as
 in Theorem~\ref{Th1} as above; see~\cite{BPR,BR} for details.
\end{enumerate}
The above examples lead to the following two corollaries of Theorem~\ref{thmA}:
\begin{Corollary}
 Let $X_{\Sigma}$ be a spin $(\left<k\right>\!\fb)$-manifold. Assume $\dim
 X\geq 7$ and that $X$ and $\beta X$ are simply-connected. Then the homotopy
 type of the space $\Riem^{\psc}(X_{\Sigma})$ is a bordism invariant
 and depends only on the bordism class $[X_{\Sigma}]\in
 \Omega^{\spin,\left<k\right>\fb}_n$.
\end{Corollary}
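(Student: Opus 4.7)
The plan is to deduce the corollary from Theorem~\ref{thmA} by a standard handle-trading argument: any spin $(\langle k\rangle,\Z_k,\fb)$-cobordism between two such manifolds can be built from elementary surgeries each of which is admissible in the sense of one of the two parts of Theorem~\ref{thmA}, so iteration yields a chain of weak homotopy equivalences.

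To set up, choose a spin $(\langle k\rangle,\Z_k,\fb)$-cobordism $W_\Sigma=W\cup_{\partial^s W}-N(\beta W)\colon X_\Sigma\bord X_{\Sigma,1}$ with structure map $\bar f\colon \beta W\to B\Z_k$ restricting to the given maps on the Bocksteins. First I would handle-decompose the Bockstein cobordism $\bar B:=\beta W\colon \beta X\bord \beta X_1$ relative to $\beta X$. Since $\ell=\dim\langle k\rangle=0$, we have $\dim\beta X=n-1\geq 6$ and $\dim\bar B=n\geq 7$; because $\beta X$ and $\beta X_1$ are simply connected, standard Smale-type handle trading (cancellation of $0$- and $1$-handles using connectedness and $\pi_1=0$, together with their duals, and further cancellation of $2$-handles against $3$-handles via the Whitney trick, valid in ambient dimension $n\geq 7$) produces a handle decomposition of $\bar B$ whose indices all lie in $[3,n-2]$. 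Equivalently, $\bar B$ is built from $\beta X\times I$ by a finite sequence of surgeries along framed spheres $S^p\subset \beta X$ with $2\leq p\leq n-3=n-\ell-3$.

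For each such surgery the null-homotopy hypothesis of Theorem~\ref{thmA}(ii) is automatic: $B\Z_k$ is the Eilenberg--MacLane space $K(\Z_k,1)$, so $\pi_p(B\Z_k)=0$ for every $p\geq 2$ and every map $S^p\to B\Z_k$ is null-homotopic. Theorem~\ref{thmA}(ii) therefore applies to each elementary Bockstein surgery, giving a chain of weak homotopy equivalences and reducing us to the case in which the Bockstein cobordism has been fully absorbed, leaving only an ordinary spin cobordism $W'\colon X'\bord X_1$ between simply-connected $n$-manifolds that is a product near $\partial X_1$. The same handle-trading argument, now applied to $W'$ relative to its boundary collar, yields a sequence of interior surgeries along framed spheres $S^p\subset X'$ with $2\leq p\leq n-3$, and Theorem~\ref{thmA}(i) -- which in this case is just Chernysh's theorem, as noted in Remark~\ref{Part(i)A} -- supplies the remaining chain of weak homotopy equivalences. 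Composing the two chains proves $\Riem^\psc(X_\Sigma)\simeq \Riem^\psc(X_{\Sigma,1})$.

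The main technical point is the handle-trading step for $\bar B$, where the low-index (and dually high-index) handle cancellations must be executed compatibly with the structure map $\bar f\colon \bar B\to B\Z_k$. This is precisely where the Eilenberg--MacLane nature of $B\Z_k$ is crucial: the obstruction to extending $\bar f$ across the trace of any modified handle of index $\geq 2$ lies in a vanishing homotopy group of $B\Z_k$, so none of the trades involved in reducing the indices to the range $[3,n-2]$ meet an obstruction. The interior handle-trading for $W'$ involves no structure-map data and is the classical simply-connected argument that already underlies Gromov--Lawson bordism invariance.
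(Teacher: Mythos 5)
Your overall strategy is the same one the paper uses: the paper relegates the proof of this corollary to the Remark following it, which asserts that the cobordism $W_{\Sigma}$ can be modified so that $\beta X_1\subset \beta W$ and $(X_1,\p X_1)\rh (W,\p W)$ are $2$-connected, and then decomposes the result into elementary surgeries to which Theorem~\ref{thmA}$(ii)$ and $(i)$ apply. Your two-stage decomposition (Bockstein cobordism first, then the interior rel boundary) and your observation that the null-homotopy hypothesis of Theorem~\ref{thmA}$(ii)$ is automatic because $B\Z_k=K(\Z_k,1)$ has $\pi_p=0$ for $p\geq 2$ are exactly the intended argument, stated in more detail than the paper gives.

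There is, however, one step where your justification does not work as written. You claim that all obstructions to carrying $\bar f$ through the handle modifications lie in $\pi_{p}(B\Z_k)$ with $p\geq 1$ coming from handles of index $\geq 2$, and hence vanish; but a handle of index $2$ is attached along a circle, and $\pi_1(B\Z_k)=\Z_k\neq 0$. More to the point, handle trading does not change the cobordism, so the real issue is the preliminary step of making $(\beta W,\beta X)$ and $(\beta W,\beta X_1)$ two-connected: since $\beta X_1$ is simply connected this forces $\pi_1(\beta W)=0$, and killing $\pi_1(\beta W)$ requires surgery on embedded circles $\gamma\subset \mathrm{int}(\beta W)$ over which the $k$-fold covering $\p^s W\to\beta W$ must extend, i.e., $\bar f_*[\gamma]=0\in\Z_k$. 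Surgering a normal generating set of $\ker\bar f_*$ only reduces $\pi_1(\beta W)$ to $\mathrm{im}(\bar f_*)\subseteq\Z_k$, which need not be trivial for an arbitrary cobordism between simply connected Bocksteins (e.g., connect-sum $\beta X\times I$ with $S^1\times S^{n-1}$ carrying nontrivial monodromy). To close this you must use the freedom to \emph{choose} the cobordism: since $\beta X$ and $\beta X_1$ are simply connected, $f$ and $f_1$ are null-homotopic, so the Bockstein data lands in the split summand $\Omega^{\spin}_{n-1}(\mathrm{pt})\subset\Omega^{\spin}_{n-1}(B\Z_k)$ and one can replace $\beta W$ by a cobordism with constant structure map (and then make it simply connected by unobstructed surgeries), splicing the result back into $W_{\Sigma}$ via the bordism exact triangle. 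This is the content hidden in the paper's phrase ``could be modified,'' and it is the one point your write-up should address explicitly rather than subsume under the vanishing of higher homotopy of $B\Z_k$.
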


\begin{Corollary}
 Let $X_{\Sigma}$ be a spin $(\eta\fb)$-manifold. Assume $\dim X\geq
 9$ and that $X$ and $\beta X$ are simply-connected. Then the homotopy type
 of the space $\Riem^{\psc}(X_{\Sigma})$ is a bordism invariant and
 depends only on the bordism class $[X_{\Sigma}]\in
 \Omega^{\spin,\eta\fb}_n$.
\end{Corollary}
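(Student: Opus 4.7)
The plan is to reduce the corollary to iterated applications of Theorem~\ref{thmA}. Fix two simply-connected spin $(\eta\fb)$-manifolds $X_{\Sigma,0}$ and $X_{\Sigma,1}$ with $[X_{\Sigma,0}]=[X_{\Sigma,1}]\in \Omega^{\spin,\eta\fb}_n$, and choose a spin $(\eta,\fb)$-cobordism $W_{\Sigma}=W\cup_{\p^s W}-N(\beta W)$ between them. The goal is to write the passage from $X_{\Sigma,0}$ to $X_{\Sigma,1}$ as a finite sequence of elementary moves, each of which is either an interior surgery on $X$ (covered by part~$(i)$ of Theorem~\ref{thmA}) or a surgery on the Bockstein $\beta X$ that is null-homotopic against $f\colon\beta X\to BS^1$ (covered by part~$(ii)$).

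First I would choose compatible Morse functions on the pairs $(\beta W,\beta X_0)$ and $(W,X_0)$, with the Morse function on $\p^s W$ induced from the one on $\beta W$ by the $L$-bundle projection $\bar p\colon \p^s W\to\beta W$. Because $X_0,X_1,\beta X_0,\beta X_1$ are all simply connected and $\dim X=n\ge 9$ (so $\dim\beta W=n-1\ge 8$ and $\dim W=n+1\ge 10$), standard handle trading allows me to cancel all handles of index $\le 2$ and all handles of index $\ge \dim-2$ in both cobordisms. After trading, every critical point on $\beta W$ has index $p+1$ with $2\le p\le n-\ell-3=n-4$, and every critical point on $W$ has index $p+1$ with $2\le p\le n-3$. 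Crucially, for every Bockstein handle the attaching sphere $S^p\subset\beta X$ bounds the cocore disk $D^{p+1}\subset\beta W$, and the extension $\bar f\colon\beta W\to BS^1$ restricts to a null-homotopy of $f\circ \iota$ on this disk, so the hypothesis of Theorem~\ref{thmA}$(ii)$ is automatically met.

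I would then order the moves so that all Bockstein handles are attached first (in increasing index), and only afterwards all interior handles. A Bockstein $(p+1)$-handle $D^{p+1}\times D^{n-\ell-1-p}$ on $\beta X$ is covered by an $L$-bundle $Z\to D^{p+1}\times D^{n-\ell-1-p}$ using the null-homotopy, and simultaneously a matching collar piece of $W$ (namely the pullback $\bar p^{-1}$ of this handle) is glued onto $\p X$; this keeps the $(L,G)$-structure intact at every stage and produces an intermediate pseudo-manifold of the form \eqref{eq1a}. Each such elementary step yields, by Theorem~\ref{thmA}$(ii)$, a weak homotopy equivalence between the $\Riem^{\psc}$-spaces of consecutive pseudo-manifolds. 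Once the Bockstein has been transformed into $\beta X_1$, the remaining interior $X$-handles fall under part~$(i)$ of Theorem~\ref{thmA}, reproducing Chernysh's theorem. Composing all these equivalences gives $\Riem^{\psc}(X_{\Sigma,0})\simeq\Riem^{\psc}(X_{\Sigma,1})$.

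The main obstacle will be the careful book-keeping in the middle step: arranging that the handle trading on $\beta W$ is compatible with both the spin structure and the $S^1$-bundle structure on $\p^s W\to\beta W$, and checking that the intermediate pseudo-manifolds are again simply connected spin $(\eta\fb)$-manifolds satisfying the dimension bounds required to apply Theorem~\ref{thmA} at the next step. The dimension assumption $n\ge 9$ (rather than $n\ge 7$ as in the $\langle k\rangle$-case) is precisely what gives the needed room to perform this trading on the lower-dimensional Bockstein cobordism $\beta W$.
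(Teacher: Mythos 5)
Your proposal is correct and follows essentially the same route as the paper, which disposes of this corollary in a one-sentence remark: modify the cobordism so that the inclusions $\beta X_1\subset\beta W$ and $(X_1,\p X_1)\hookrightarrow(W,\p W)$ are $2$-connected, decompose it into elementary surgeries in the admissible index range, and apply Theorem~\ref{thmA} repeatedly, with the null-homotopy of $f\circ\iota$ supplied by the core disk of each Bockstein handle via $\bar f$. Your write-up is in fact more detailed than the paper's justification (modulo the slip of writing ``cocore'' where you mean the core disk of the handle), so nothing further is needed.
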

\begin{Remark}
 To simplify the presentation, we consider only the spin case
 here. However, there are also non-spin cases where similar results
 hold. We~leave this intentionally outside of this paper.
\end{Remark}
The cases addressed in Theorems~\ref{thm:Lbdy} and~\ref{thm:BaasHP2}
give interesting implications.
\begin{enumerate}\itemsep=0pt
\item[{\rm (3)}] Let $L$ and $G$ be as in Theorem~\ref{thm:Lbdy}, i.e., $G$ is a simply connected Lie group, $L$ is a spin boundary, say
 $L=\partial\bar L$, with the standard metric $g_L$ on $L$ extending
 to a psc-metric on $\bar L$, and with the $G$-action on $L$
 extending to a $G$-action on $\bar L$. Then an $(L,G)$-singular spin
 manifold $X_\Sigma$ determines an element in the relevant bordism group
 $\Omega_n^{\spin,(L,G)-\fb}$ which fits to an exact triangle (see~\cite{BPR}):
\begin{gather*}
 \begin{diagram}
 \setlength{\dgARROWLENGTH}{1.95em}
 \node{\Omega^{\spin}_*}
 \arrow[2]{e,t}{i}
 \node[2]{\Omega^{\spin,(L,G)\fb}_*}
 \arrow{sw,t}{\beta}
 \\
 \node[2]{\Omega^{\spin}_{*}(BG).}
 \arrow{nw,t}{T} \node {}
 \end{diagram}
\end{gather*}
Here the indices $\alpha(\beta X)\in KO_{n-\ell-1}$ and $\alpha_{{\rm
 cyl}}(X)\in KO_n$ can be thought of as homomorphisms from
$\Omega^{\spin,(L,G)\fb}_*$ to a relevant $K$-theory.
\item[{\rm (4)}] Let $L=\mathbb{HP}^{2k}$ and $G={\rm Sp}(2k+1)$, $n\ge 1$.
 Assume that $\p X = \beta X \times L$, i.e., the $L$-bundle over
 $\beta X$ is trivial, or in other words that the singularities are
 of Baas--Sullivan type. Then a closed $(L,G)$-singular spin manifold
 $X_\Sigma$ determines an element in the corresponding bordism group
 $\Omega^{\spin,(L,G)\fb}_n$, and, as above, the index homomorphism
 from $\Omega^{\spin,(L,G)\fb}_*$ to a relevant $K$-theory.
\end{enumerate}
These examples lead to following corollary
\begin{Corollary}
 In both of the cases described in {\rm (3)} and {\rm (4)}, the homotopy type of the space
 $\Riem^{\psc}(X_{\Sigma})$ is a bordism invariant and depends on the
 bordism class $[X_{\Sigma}]\in \Omega^{\spin,(L,G)\fb}_n$, provided
 $n-\ell \geq 6$ $($where $\ell =2k$ in the case $(4))$.
\end{Corollary}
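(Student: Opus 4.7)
My plan is to deduce the corollary from Theorem~\ref{thmA} via a handle decomposition argument of Gromov--Lawson--Stolz type, adapted to the fibred singular setting. Given two bordant $(L,G,\fb)$-singular spin manifolds $X_{\Sigma}$ and $X_{\Sigma,1}$ representing the same class in $\Omega^{\spin,(L,G)\fb}_n$, choose a spin $(L,G,\fb)$-cobordism $W_{\Sigma}=W\cup_{\p^s W}-N(\beta W)$ between them. The goal is to exhibit $W_{\Sigma}$ as a finite composition of elementary surgeries, each of one of the two types appearing in Theorem~\ref{thmA} and each of admissible index; iterating Theorem~\ref{thmA} along this sequence then produces a weak homotopy equivalence between $\Riem^{\psc}(X_{\Sigma})$ and $\Riem^{\psc}(X_{\Sigma,1})$.

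First I would pick a handle decomposition of $W$ relative to $X$ (yielding interior surgeries of type~(i)) and, separately, a handle decomposition of $\bar B := \beta W$ relative to $\beta X$ (yielding Bockstein surgeries of type~(ii)); the fibre bundle $\p^s W\to\bar B$ is reconstructed from the Bockstein handles by lifting each one via the structure map $\bar f\colon\bar B\to BG$ to an $L$-bundle handle. Since $X$, $\beta X$, their primed analogues (and, in case~(3), $G$) are simply-connected, standard Smale--Wall handle trading allows me to replace both decompositions by ones in which every interior handle has index $p$ with $2\le p\le n-3$ and every Bockstein handle has index $p$ with $2\le p\le (n-\ell)-3$; the hypothesis $n-\ell\ge 6$ is precisely what is needed to make the latter range non-empty after trading.

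Next I must verify the null-homotopy condition $f\circ\iota\simeq\ast$ on the attaching sphere of each Bockstein handle, which is required in Theorem~\ref{thmA}(ii). In case~(4), where the $L$-bundle is trivial, $f$ itself is null-homotopic and the condition is automatic. In case~(3), since $G$ is simply-connected, $BG$ is $2$-connected, so the condition holds automatically for low-index Bockstein handles; for the remaining indices I would exploit the exact triangle $\Omega^{\spin}_*\to\Omega^{\spin,(L,G)\fb}_*\to\Omega^{\spin}_*(BG)$ displayed in~(3) to split the bordism problem into a purely smooth part (handled by interior surgeries of type~(i), to which Chernysh's theorem applies) and a part living over $BG$, arranging the Bockstein handles so that each attaching sphere factors through the null-homotopic locus of $\bar f$.

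The main obstacle is this last step: maintaining precise control of the structure map $\bar f\colon\bar B\to BG$ throughout the handle-trading procedure in case~(3), so that each elementary Bockstein surgery produced by the decomposition still satisfies the null-homotopy hypothesis of Theorem~\ref{thmA}(ii). Once this bookkeeping is in place, the weak homotopy equivalences produced by Theorem~\ref{thmA} at each elementary step compose along $W_{\Sigma}$ to yield the corollary.
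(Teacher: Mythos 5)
Your overall strategy is the same as the paper's: the paper disposes of this corollary in a short remark, observing that the dimensional and simple-connectivity assumptions allow one to modify a cobordism $W_{\Sigma}\colon X_{\Sigma}\bord X_{\Sigma,1}$ so that the embeddings $\beta X_1\subset \beta W$ and $(X_1,\p X_1)\rh (W,\p W)$ are $2$-connected, after which the cobordism decomposes into elementary surgeries of the two types in Theorem~\ref{thmA} with admissible indices, and the weak equivalences compose. Your handle-trading formulation of this is correct, and your accounting of why $n-\ell\geq 6$ is exactly the threshold for the Bockstein handles is right.

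However, the step you single out as ``the main obstacle'' --- verifying $f\circ\iota\simeq \ast$ for each Bockstein handle in case (3), and the proposed detour through the exact triangle $\Omega^{\spin}_*\to\Omega^{\spin,(L,G)\fb}_*\to\Omega^{\spin}_*(BG)$ --- is not an obstacle at all, and the detour is both unnecessary and too vague to count as a proof as written. The point is that you are not choosing surgery spheres in $\beta X$ abstractly and then hoping $f\circ\iota$ is null-homotopic: the spheres arise as attaching spheres of handles in a handle decomposition of the given cobordism $\bar B=\beta W$ relative to $\beta X$, and $\bar B$ already carries the structure map $\bar f\colon\bar B\to BG$ extending $f$. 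The core disk $D^{p+1}\subset\bar B$ of each handle has boundary the attaching sphere $\iota(S^p)\subset\beta X$ (or in the intermediate Bockstein obtained after the earlier handles), so $\bar f|_{D^{p+1}}$ is itself the required null-homotopy of $f\circ\iota$. This remains true after handle trading, since the traded decomposition is a decomposition of a cobordism that still maps to $BG$ compatibly with $f$ and $f_1$ (one performs the interior surgeries on $\bar B$ along spheres over which $\bar f$ extends, which the $2$-connectivity/simple-connectivity hypotheses guarantee in the relevant low degrees). So no splitting of the bordism class via the exact triangle, and no ``factoring through the null-homotopic locus of $\bar f$,'' is needed; with that observation your argument closes and agrees with the paper's.
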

\begin{Remark}
To see how Theorem~\ref{thmA} implies the corollaries, it is enough to notice
that in all those cases, the dimensional assumptions imply that a
cobordism $W_{\Sigma}\colon X_{\Sigma}\bord X_{\Sigma,1}$ could be
modified so that the embedding $X_{\Sigma,1} \subset W_{\Sigma}$ is
two-connected in an appropriate sense, i.e., the induced embeddings
$\beta X_1\subset \beta W$ and $(X_1,\p X_1) \rh (W,\p W)$ are both
2-connected. Then the argument goes exactly as in the case of closed
manifolds.
\end{Remark}
In the last section we show that the cases (3) and (4) above lead to an
interesting result concerning homotopy groups of
$\Riem^{\psc}(X_{\Sigma})$.

\section{Preliminaries}\label{prim}
\subsection{Positive scalar curvature on manifolds with boundary}
Here we recall the main constructions and results from~\cite{W2}. The
set-up is as follows. Given a smooth compact $n$-dimensional manifold
$X$ (possibly with boundary $\p X\neq\varnothing$), we denote by~$\Riem(X)$, the space of all Riemannian metrics on $X$. The space
$\Riem(X)$ is equipped with the standard $C^{\infty}$-topology, giving
it the structure of a Fr{\'e}chet manifold; see~\cite[Chapter 1]{TW}
for details. For~each metric $g\in \Riem(X)$, we denote by
$s_{g}\colon X\rightarrow \mathbb{R}$ the scalar curvature on $X$ of the
metric $g$ and by $\Riem^{\psc}(X)\subset \Riem(X)$ the subspace of
psc-metrics on $X$.

In the case when $\p X\neq\varnothing$, it is
necessary to consider certain
subspaces of $\Riem^{\psc}(X)$, where metrics satisfy
particular boundary constraints.
With this in mind, we specify a collar embedding $c\colon\p
X\times[0,2)\hookrightarrow X$ around $\p X$ and
 define the space $\Riem^{\psc}(X, \p X)$ as:
\begin{gather*}
 \Riem^{\psc}(X, \p X):=\big\{h\in \Riem^{\psc}(X)\colon c^{*}h|_{\p
 X\times I}=h|_{\p X}+{\rm d}t^{2}\big\},
\end{gather*}
where $I:=[0,1]\subset [0,2)$.
Fixing a particular metric $g\in\Riem^{\psc}(\p X)$, we define the
subspace $\Riem^{\psc}(X, \p X)_g\subset \Riem^{\psc}(X, \p X)$ of all
psc-metrics $h\in\Riem^{\psc}(X, \p X)$, where $(c^{*}h)|_{\p X\times
 \{0\}}=g$.

Let $Z\colon Y_0\bord Y_1$ be a bordism between $(n-1)$-dimensional
manifolds $Y_0$ and $Y_1$ given together with collars $c_i\colon Y_i \times
[0,2) \rh Z$, $i=0,1$ near the boundary $\p Z = Y_0\sqcup Y_1$. Then
 $\Riem^{\psc}(Z, \p Z)$ denotes the space of psc-metrics on $Z$ which
 restrict as a product structure on the neighbourhood
 $c_i(Y_{i}\times I)\subset Z$, $i=0,1$;
 i.e., $c_i^{*}\bar{g}=g_i+{\rm d}t^{2}$ on $Y_i\times I$ for some pair of
 metrics $g_i\in \Riem^{\psc}(Y_i)$, $i=0,1$.
Now we fix a pair of psc-metrics $g_0\in\Riem^{\psc}(Y_0)$ and
$g_1\in\Riem^{\psc}(Y_1)$ and consider the following subspace of
$\Riem^{\psc}(Z, \p Z)$:
\begin{gather*}
\Riem^{\psc}(Z, \p Z)_{g_0, g_1}:=\big\{ \bar{g}\in\Riem^{\psc}(Z, \p Z) \mid
c_i^{*}\bar{g}=g_i+{\rm d}t^{2} \text{ on } Y_i\times [0,1], \, i=0,1\big\}.
\end{gather*}
We note that each metric $\bar g\in \Riem^{\psc}(Z, \p Z)_{g_0, g_1}$ provides a
psc-bordism $(Z,\bar g)\colon (Y_0,g_0)\bord (Y_1,g_1)$. We~next assume $X$ is a manifold whose boundary $\p X = Y_0$ is equipped
with the metric $g_0$. Furthermore, we assume that both spaces
$\Riem^{\psc}(X, \p X)_{g_0}$ and $\Riem^{\psc}(Z, \p Z)_{g_0, g_1}$ are
non-empty. Now, by making use of the relevant collars, we glue together
$X$ and $Z$ to obtain a smooth manifold which we denote $X\cup Z$ and
which has boundary $\p (X\cup Z)=Y_1$; see Figure~\ref{bordism-a}.

In particular, we obtain the space $\Riem^{\psc}(X\cup Z, Y_1)_{g_1}$ of
psc-metrics which restrict as ${g_1}+{\rm d}t^{2}$ on $c_1(Y_1\times [0,1])
\subset Z \subset X\cup Z$. Then for any metric
$\bar{g}\in\Riem^{\psc}(Z, \p Z)_{g_0, g_1}$, we obtain a map:
\begin{align}
 \mu_{Z,\bar{g}}\colon\ \Riem^{\psc}(X, \p X)_{g_0}&
 \longrightarrow \Riem^{\psc}(X\cup Z, Y_1)_{g_1},\nonumber
 \\
 h&\longmapsto h\cup \bar{g},\label{gluemap-1}
\end{align}
where $h\cup \bar{g}$ is the metric obtained on $X\cup Z$ by the
obvious gluing depicted in Figure~\ref{bordism-a}.

\begin{figure}[!htbp]
\hspace{25mm}
\begin{picture}(0,0)
\includegraphics{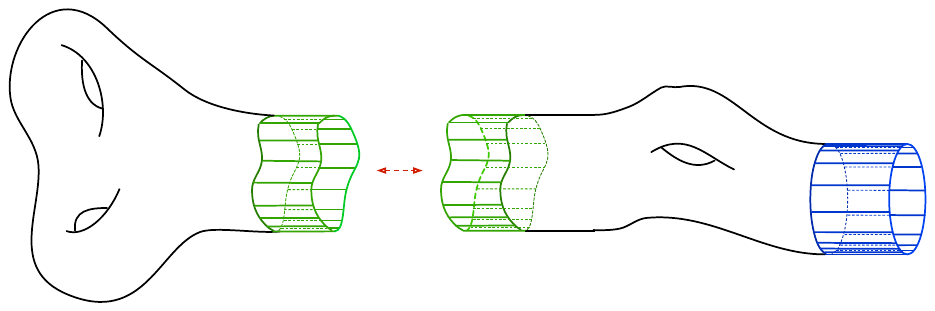}%
\end{picture}
\setlength{\unitlength}{3947sp}%
\begingroup\makeatletter\ifx\SetFigFont\undefined%
\gdef\SetFigFont#1#2#3#4#5{%
 \reset@font\fontsize{#1}{#2pt}%
 \fontfamily{#3}\fontseries{#4}\fontshape{#5}%
 \selectfont}%
\fi\endgroup%
\begin{picture}(5079,1559)(1902,-7227)
\put(1400,-6000){\makebox(0,0)[lb]{\smash{{\SetFigFont{10}{8}{\rmdefault}{\mddefault}{\updefault}{\color[rgb]{0,0,0}$(X, h)$}%
}}}}
\put(3050,-7000){\makebox(0,0)[lb]{\smash{{\SetFigFont{10}{8}{\rmdefault}{\mddefault}{\updefault}{\color[rgb]{0,0,0}$g_0+{\rm d}t^{2}$}%
}}}}
\put(4000,-7000){\makebox(0,0)[lb]{\smash{{\SetFigFont{10}{8}{\rmdefault}{\mddefault}{\updefault}{\color[rgb]{0,0,0}$g_0+{\rm d}t^{2}$}%
}}}}
\put(5700,-7130){\makebox(0,0)[lb]{\smash{{\SetFigFont{10}{8}{\rmdefault}{\mddefault}{\updefault}{\color[rgb]{0,0,0}$g_1+{\rm d}t^{2}$}%
}}}}
\put(5000,-5900){\makebox(0,0)[lb]{\smash{{\SetFigFont{10}{8}{\rmdefault}{\mddefault}{\updefault}{\color[rgb]{0,0,0}$(Z, \bar{g})$}%
}}}}
\end{picture}%
\caption{Attaching $(X, h)$ to $(Z,\bar{g})$ along a common boundary $\p X=Y_0$.}
\label{bordism-a}
\end{figure}
Consider the case when the bordism $Z\colon Y_0\bord Y_1$ is an
\emph{elementary bordism}, i.e., when $Z$ is the trace of a surgery on
$Y_0$ with respect to an embedding $\phi\colon S^p\times D^{q+1}\to Y_0$
with $p+q+1=n-1=\dim Y_0$. Then we have the following.

\begin{Lemma}[surgery lemma; see~\cite{Ch,EF,W1}]
 Let $g_0\in \Riem^+(Y_0)$ be any metric. Assume $q\geq 2$. Then
 there exist metrics $g_1\in \Riem^+(Y_1)$ and $\bar g\in
 \Riem^+(Z,\p Z)_{g_0,g_1}$ such that $(Z,\bar g)\colon (Y_0,g_0)\bord
 (Y_1,g_1)$ is a psc-bordism.
\end{Lemma}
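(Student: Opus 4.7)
\smallskip
\noindent\textbf{Proof plan.} This is the classical Gromov--Lawson surgery lemma in its bordism-refined form; the proof I have in mind follows the strategy of \cite{GL}, sharpened in \cite{Ch, EF, W1}. Write $Z$ as the union of the cylinder $Y_0 \times [0,1]$ and a handle $H := D^{p+1} \times D^{q+1}$, attached to $Y_0 \times \{1\}$ along $\phi(S^p \times D^{q+1})$; the top boundary is $Y_1 = (Y_0 \setminus \phi(S^p \times \text{int}\,D^{q+1})) \cup (D^{p+1} \times S^q)$. I would build $\bar g$ in three stages. On $Y_0 \times [0,1/3]$ set $\bar g = g_0 + \dif t^2$. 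On the middle slab $Y_0 \times [1/3, 2/3]$ perform the Gromov--Lawson bending, described below. On the top piece, replace a small tube around the surgery sphere with the handle $H$, equipped with a product torpedo metric.

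The bending step is the heart of the argument. Let $\Sph := \phi(S^p \times \{0\}) \subset Y_0$. The plan is to deform $g_0$, supported in $\phi(S^p \times D^{q+1})$, through psc-metrics, to a metric $g_0'$ which on a smaller tube $\phi(S^p \times D^{q+1}_\delta)$ takes the standard product form $\rho^2 g_{S^p} + g_{\tor}^{q+1}(\delta)$, where $g_{S^p}$ is the round metric on $S^p$ (of radius one) and $g_{\tor}^{q+1}(\delta)$ is a torpedo metric on $D^{q+1}$ --- round hemisphere near the origin, cylindrical of radius $\delta$ near the boundary. Concretely, in normal-geodesic coordinates $(y, r, \theta) \in \Sph \times [0, r_0) \times S^q$ around $\Sph$, one reparametrises the radial coordinate along a carefully chosen curve $\gamma$ in the $(t, r)$-plane, so that the level sets of the distance-to-$\Sph$ function are gradually pulled inward. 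The new metric acquires a large positive scalar-curvature term of order $q(q-1)/r^2$ coming from the shrinking round $S^q$ factor, which dominates any negative contribution from the principal curvatures of the bent hypersurface precisely when $q \geq 2$. Spreading this deformation across $Y_0 \times [1/3, 2/3]$ yields a smooth psc-metric equal to $g_0 + \dif t^2$ near $t = 1/3$ and $g_0' + \dif t^2$ near $t = 2/3$.

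For the top slab, I excise $\phi(S^p \times D^{q+1}_\delta) \times [2/3, 1]$ and glue in the handle $H$ with the product metric $g_{\tor}^{p+1}(\rho) + \sigma^2 g_{S^q}$, choosing the torpedo radius $\rho$ and the sphere scale $\sigma$ so that the metric agrees smoothly with $g_0'$ along the attaching collar $S^p \times D^{q+1} \times \{2/3\}$. The scalar curvature of this product is $\scal(g_{\tor}^{p+1}(\rho)) + q(q-1)/\sigma^2$; shrinking $\sigma$ and using $q \geq 2$ keeps it strictly positive, while the torpedo shape closes the handle smoothly along its core $\{0\} \times D^{q+1}$. The glued metric $\bar g$ restricts on the new top boundary to a psc-metric $g_1$, and is automatically of product form $g_1 + \dif t^2$ near $t=1$, so $\bar g \in \Riem^{\psc}(Z, \p Z)_{g_0, g_1}$ as required.

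The main obstacle is the bending step: one has to verify, by explicit scalar-curvature computation in the rotationally symmetric coordinates around $\Sph$, that the bent metric can be kept psc for every intermediate value of the bending parameter, and that the two endpoint regimes (original and standard) match smoothly. This is the content of the original Gromov--Lawson bending lemma; its refinement to yield a smooth concordance on a cylinder of finite length (rather than merely the endpoint metric $g_0'$ on $Y_0$) is the upgrade carried out in \cite{W1, EF}. Once that is in hand, the remaining ingredients --- choosing cut-offs, matching collars, and smoothing corners --- are routine.
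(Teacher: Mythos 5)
The paper states this lemma without proof, citing~\cite{Ch,EF,W1}, and your sketch reproduces exactly the argument those references carry out: the Gromov--Lawson bending in normal coordinates around the surgery sphere (where $q\geq 2$ supplies the dominant $q(q-1)/r^2$ term), the conversion of that isotopy into a concordance on a finite cylinder, and the capping-off of the handle by a product of torpedo metrics; this is correct. The only point you gloss over is that after the bending the induced metric on the tube is a product of the \emph{restricted} metric on $S^p$ with a torpedo, and one still has to isotope that base metric to the round one (harmless because, once the torpedo fibre is scaled small, any deformation of the base preserves positivity) --- but this is part of what the cited bending lemma delivers, so it is a forgivable omission in a sketch.
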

Such a bordism is usually called a \emph{Gromov--Lawson bordism} (or
\emph{GL-bordism} for short). Here is a reformulation of the main
technical result from~\cite{W2}:
\begin{Theorem}\label{thm-boundary}
Let $Z\colon Y_0\bord Y_1$ be an elementary bordism as above with $p,q\geq
2$. Then for any metric $g_0\in \Riem^+(Y_0)$ there exist metrics
$g_1\in \Riem^+(Y_1)$ and $\bar g\in \Riem^+(Z,\p Z)_{g_0,g_1}$ such
that the map
\begin{gather*}
\mu_{Z,\bar{g}}\colon\ \Riem^{\psc}(X, \p X)_{g_0} \xrightarrow{\simeq} \Riem^{\psc}(X\cup Z, Y_1)_{g_1}
\end{gather*}
defined by {\rm (\ref{gluemap-1})}, is a weak homotopy equivalence.
\end{Theorem}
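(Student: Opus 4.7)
The plan is to follow the Chernysh--Walsh framework, in which the proof proceeds by identifying a common ``standard form'' subspace to which both sides deformation-retract. First, I would produce $g_1$ and $\bar g$ via the surgery lemma applied to $Z$ with boundary metric $g_0$: the condition $q\geq 2$ is exactly what the Gromov--Lawson construction requires, and the output is a psc-metric $\bar g\in \Riem^+(Z,\p Z)_{g_0,g_1}$ which is in product form on both boundary collars and, crucially, in ``Gromov--Lawson standard form'' on a tubular neighborhood of the core of the surgery handle.

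Next, using this specific $\bar g$, I would define standard subspaces
\begin{gather*}
\Riem_{\std}^{\psc}(X,\p X)_{g_0} \subset \Riem^{\psc}(X,\p X)_{g_0},\qquad \Riem_{\std}^{\psc}(X\cup Z,Y_1)_{g_1} \subset \Riem^{\psc}(X\cup Z,Y_1)_{g_1},
\end{gather*}
consisting respectively of psc-metrics which coincide with a fixed Gromov--Lawson model in a tubular neighborhood of $S^p\subset Y_0$ (extended through the collar of $X$), and of psc-metrics on $X\cup Z$ which agree with $\bar g$ on the corresponding tubular neighborhood of the core of the surgery handle extended through the $Y_1$-collar. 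The Gromov--Lawson construction of $\bar g$ identifies these two standard subspaces canonically, and $\mu_{Z,\bar g}$ restricts to this identification, giving a homeomorphism between them. Thus $\mu_{Z,\bar g}$ fits into a commutative square with two inclusions $\Riem_{\std}^{\psc}\hookrightarrow \Riem^{\psc}$ (on $X$ and on $X\cup Z$) and a homeomorphism of the standard subspaces; the theorem reduces to showing that both of these inclusions are weak homotopy equivalences.

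The main obstacle is this last reduction, which amounts to a \emph{family} version of the Gromov--Lawson surgery theorem, adapted to the boundary-with-collar setting. Given any continuous family of psc-metrics on $X\cup Z$ (or on $X$) parametrized by a finite CW complex, one must deform it continuously into the standard subspace, maintaining positivity throughout and respecting the product boundary condition. This is accomplished via a parametric extension of the original Gromov--Lawson bending procedure, combined with a contractibility argument for the space of admissible bending profiles compatible with the prescribed boundary conditions. The boundary-respecting family surgery theorem is the technical heart of \cite{W2} (building on \cite{Ch, W1} for the closed case, with further refinement in \cite{EF}); granting it, the above argument assembles into a complete proof.
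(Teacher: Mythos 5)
The paper does not actually prove Theorem~\ref{thm-boundary}: it is quoted verbatim as ``a reformulation of the main technical result from~\cite{W2}'' and used as a black box, and the paper's own contribution is to adapt the proof of \cite[Theorem~A]{W2} to the fibred setting (Theorem~\ref{thm-fb-sing}, Section~3). So the relevant comparison is with the proof structure of \cite{W2}, which the paper recapitulates around diagram~\eqref{bdydiag-1}.

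Your sketch transplants the closed-manifold Chernysh template, and that is where it breaks. The surgery sphere $S^p$ lies in the \emph{boundary} $Y_0=\p X$, and the domain $\Riem^{\psc}(X,\p X)_{g_0}$ consists of metrics forced to equal $g_0+{\rm d}t^{2}$ on the collar, with $g_0$ an \emph{arbitrary} prescribed psc-metric. Hence your proposed standard subspace of the domain --- metrics agreeing with a fixed Gromov--Lawson model on a tubular neighbourhood of $S^p\subset Y_0$ extended through the collar --- is empty unless $g_0$ already happens to be standard near $S^p$: the product condition $g_0+{\rm d}t^{2}$ and the condition of being of the form ${\rm d}s_p^{2}+g_{\tor}^{q+1}+{\rm d}t^{2}$ on that neighbourhood are incompatible, and one cannot standardize by deforming in place because the boundary condition is frozen. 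The actual argument standardizes by \emph{appending material}: one glues onto $X$ a cylinder $Y_0\times[0,\lambda+2]$ carrying a concordance from $g_0$ to a standardized metric, modified near $\phi(S^p\times D^{q+1})$ into a boot metric $g_{\boot}^{q+2}(\delta)_{\Lambda,\bar l}$; the resulting gluing map $\mu_{\boot}$ lands in a \emph{different} space (with boundary condition $g_{\std}$, not $g_0$) and is itself shown to be a weak homotopy equivalence. The factorization is $\mu_{Z,\bar g}=\iota\circ\mu_{\Estd}\circ\mu_{\boot}$, where $\mu_{\Estd}$ is a homeomorphism onto the subspace of ``extra-standard'' metrics of the target and $\iota$ is its inclusion. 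Only on the target side does your picture (inclusion of a standard subspace shown to be a weak equivalence by moving compact families into standard form) match the real proof; that step is the technically hard part of \cite{W2}, and it is there --- in standardizing over the co-core $D^{p+1}\times S^{q}$ --- that the hypothesis $p\geq 2$, which your sketch never uses, is actually needed.
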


\subsection{Lifting to a Riemannian submersion}\label{submersionsection}
An important aspect of our work involves lifting psc-metrics
from the base of a smooth fibre bundle to a psc-submersion metric
on its total space. In this section we briefly recall some pertinent facts and
establish some notation.

Let $p\colon M\rightarrow B$, be the associated smooth fibre bundle to a
principal $G$-bundle with respect to a Lie group, $G$, a compact
smooth base manifold $B$ and compact smooth fibre manifold $F$, on
which $G$ acts. Thus, the bundle has a structure map $f\colon B\rightarrow
BG$. We~denote by $\mathcal{HD}(p)$ the space of all horizontal
distributions. This space is topologised in the obvious way as a
subspace of smooth sections of the Grassmann bundle obtained by
replacing each tangent space to $M$ with the Grassmannian of $\dim
B$-dimensional subspaces. The following fact is elementary.
\begin{Proposition}
The space $\mathcal{HD}(p)$ is convex.
\end{Proposition}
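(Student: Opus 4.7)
The plan is to exhibit $\mathcal{HD}(p)$ as (an open subset of) the space of smooth sections of an affine bundle over $M$, from which convexity is immediate.

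First I would reformulate the notion of a horizontal distribution in affine terms. Let $V := \ker(\dif p) \subset TM$ be the vertical subbundle. A horizontal subspace $H_x \subset T_xM$ complementary to $V_x$ is the same data as a linear projection $\pi_x\colon T_xM \to V_x$ with $\pi_x|_{V_x}=\id_{V_x}$, the correspondence being $H_x = \ker \pi_x$ and, conversely, $\pi_x$ defined as the projection onto $V_x$ along $H_x$. The set $A_x$ of such projections is a non-empty affine subspace of $\Hom(T_xM,V_x)$: given two such projections $\pi_x^0,\pi_x^1$, the difference $\pi_x^1-\pi_x^0$ vanishes on $V_x$ and therefore factors through $T_xM/V_x \cong p^*T_{p(x)}B$, yielding an element of $\Hom\bigl(p^*T_{p(x)}B,V_x\bigr)$. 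Thus $A_x$ is an affine space modelled on $\Hom\bigl(p^*T_{p(x)}B,V_x\bigr)$.

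Second, I would globalise this pointwise picture. The assignment $x \mapsto A_x$ defines a smooth affine bundle $\mathcal{A} \to M$ modelled on the vector bundle $\Hom(p^*TB, V)$, and smooth horizontal distributions on $M$ correspond bijectively to smooth sections of $\mathcal{A}$; under this correspondence the $C^\infty$-topology on $\mathcal{HD}(p)$ (inherited from the Grassmann bundle formulation) agrees with the standard $C^\infty$-topology on sections of $\mathcal{A}$, because the map sending a projection to its kernel is a smooth embedding of each fibre of $\mathcal{A}$ into the Grassmann bundle.

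Third, convexity is automatic for an affine bundle of sections. Given $\pi^0,\pi^1 \in \Gamma(\mathcal{A})$ and $t\in [0,1]$, the section
\begin{gather*}
\pi^t := \pi^0 + t(\pi^1 - \pi^0)
\end{gather*}
is well-defined because $\pi^1-\pi^0$ is a smooth section of the underlying vector bundle $\Hom(p^*TB,V)$, and an affine space is closed under convex combinations. The corresponding horizontal distributions $H^t_x := \ker \pi^t_x$ then furnish a path in $\mathcal{HD}(p)$ joining $H^0$ to $H^1$, depending smoothly on $t$.

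There is no genuine obstacle here; the only thing to watch is making the identification of a distribution with a projection precise and observing that the resulting affine structure is compatible with the $C^\infty$-topology on $\mathcal{HD}(p)$.
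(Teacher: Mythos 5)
Your proof is correct; the paper offers no proof of this proposition (it is declared elementary), and your identification of horizontal distributions with sections of an affine bundle modelled on $\mathrm{Hom}(p^*TB,V)$ is precisely the standard argument the paper implicitly relies on when it later forms convex combinations such as $(1-t)\mathcal{H}+t\mathcal{H}'$. The only cosmetic remark is that the hedge ``an open subset of'' is unnecessary: the correspondence identifies $\mathcal{HD}(p)$ with the space of \emph{all} sections of the affine bundle, since any projection onto $V_x$ restricting to the identity on $V_x$ automatically has kernel complementary to $V_x$.
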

Suppose we have a $G$-invariant metric on $F$, denoted $g_F$, a metric
on $B$ denoted $g_B$ and a~horizontal distribution
$\mathcal{H}\in\mathcal{HD}(p)$. By a well-known construction
\cite[Proposition~9.59]{Besse}, the triple, $(g_{B}, g_{F}, \mathcal{H})$,
determines a unique Riemannian submersion metric (with totally
geodesic fibre metrics isometric to~$g_F$) on the total space~$M$. For~simplicity, we fix the $G$-invariant metric on $F$, $g_F$, and
consider constant multiples of this metric, $\tau g_{F}$, for some
$\tau\in(0,\infty)$. This ability to scale up or down the fibre metric
will give us important flexibility later on. We~consider only the case
when the fibre metric $g_{F}$ has non-negative scalar curvature and
the base metric on $B$ has positive scalar curvature. Thus, we assume
$\Riem^{\psc}(B)\neq \varnothing$.

We notice that the construction of a submersion metric varies
continuously with respect to the triple $(g_{B}, \tau g_{F},
\mathcal{H})$ and so we obtain a map
\begin{gather*}
\mathcal{S}\colon\ \Riem^{\psc}(B)\times \mathcal{HD}(p)\times (0,\infty)\longrightarrow \Riem (M),
\end{gather*}
where $\mathcal{S}(h, \mathcal{H}, \tau)$ is the unique submersion
metric with base $(B, h)$, fibre $(F, \tau g_{F})$ and horizontal
distribution $\mathcal{H}$.

We denote by $\Riem_{\mathcal{S}} (M)$, the image of the map,
$\mathcal{S}$, in $\Riem (M)$ and by $\Riem_{\mathcal{S}}^{\psc} (M)$
its subspace of~psc-metrics. Thus, each element of
$\Riem_{\mathcal{S}}^{\psc} (M)$ is a psc-submersion metric with
respect to some base metric $g_B\in\Riem^{\psc}(B)$, some fibre metric
$\tau g_{F}$ on $F$ (for some scaling constant $\tau>0$) and some
horizontal distribution $\mathcal{H}\in\mathcal{HD}(p)$. There are
obvious maps from the space $\Riem_{\mathcal{S}}^{\psc} (M)$ to the
spaces $\mathcal{HD}(p)$, $(0,\infty)$ and $\Riem^{\psc}(B)$, sending
a submersion metric to its respective horizontal distribution, fibre
scaling constant and base metric. We~consider the latter, denoting it
\begin{gather*}
\mathcal{B}\colon\ \Riem_{\mathcal{S}}^{\psc} (M)\rightarrow \Riem^{\psc}(B).
\end{gather*}
Thus, for any psc-submersion
$\bar{g}\in\Riem_{\mathcal{S}}^{\psc}$, $\mathcal{B}(\bar{g})$ is the
base metric on $B$. Finally, for any base metric $h\in \Riem^{\psc}
(B)$, we denote by $\Riem_{\mathcal{S}}^{\psc} (M)_{h}$, the pre-image
$\mathcal{B}^{-1}(h)$.

We will be interested in constructing a ``lifting" map
\begin{gather*}
\Riem^{\psc}(B)\to \Riem^{\psc}(M)
\end{gather*}
which sends each psc-metric on $B$, to a submersion metric on the
the total space of the bundle $p\colon M\to B$. This will require
some preliminary work.

From~\cite[Proposition 9.70]{Besse} we know that the scalar curvature
of such a metric, $\mathcal{S}(h,\mathcal{H},\tau)$, denoted $s_M$, is
given by the following formula of O'Neill:
\begin{gather*}
s_M=s_{h}\circ p +\frac{1}{\tau}s_{F}-\tau|A|^{2},
\end{gather*}
where $s_h$ and
$s_F$ are the scalar curvatures of the metrics $h\in\Riem^{\psc}(B)$
and $g_F$, and $A$ is the well-known tensor which is the obstruction
to the integrability of the distribution. Note that the other
well-known terms in this formula, in particular the tensor, $T$,
vanish since the fibres are totally geodesic~\cite[Theorem~9.59]{Besse}. By assumption, the sum of the first two terms in this
formula is positive and so in order to ensure that $s>0$ we need to
minimise the effect of $A$.

This formula varies continuously with respect to the choices
of $h$ and $\mathcal{H}$; recall $g_F$ is fixed. In particular, we write
$A:=A_{h, \mathcal{H}}$.
Thus, by the compactness of $M$,
there are continuous real valued parameters
\begin{gather*}
 \mathrm{m}(h)=\min\{s_{h}\circ p(x)\colon x\in M\}\qquad
 \text{and} \qquad
 \mathrm{M}_{A}(h, \mathcal{H})=\max\{|A_{h, \mathcal{H}}(x)|\colon x\in M\}.
\end{gather*}
It follows that the scalar curvature, $s$, of the metric
$\mathcal{S}(h,\mathcal{H},\tau)$ satisfies
\begin{gather*}
s\geq \mathrm{m}(h) +\frac{1}{\tau}s_{L}-\tau \mathrm{M}_{A}(h,
\mathcal{H})^{2}.
\end{gather*}
To ensure positivity of $s$, we define $\bar{\tau}(h, \mathcal{H})$ by
\begin{gather}\label{taumin}
 \bar{\tau}(h, \mathcal{H}):=\frac{\mathrm{m}(h)}{2 \mathrm{M}_{A}(h, \mathcal{H})^{2}}.
\end{gather}
Thus, we obtain a map
\begin{align*}
 \mathcal{S}^{\psc}\colon\ \Riem^{\psc}(B)\times\mathcal{HD}(p)\to &\
 \Riem_{\mathcal{S}}^{\psc} (M),\nonumber
 \\
(h,\mathcal{H})\mapsto&\
\mathcal{S}(h,\mathcal{H},\bar{\tau}(h, \mathcal{H})).
\end{align*}
By fixing some horizontal distribution, $\mathcal{H}$, we define
the map
\begin{gather*}
 \mathcal{S}_{\mathcal{H}}^{\psc}\colon\ \Riem^{\psc}(B)\rightarrow
 \Riem_{\mathcal{S}}^{\psc} (M),
\end{gather*}
by $\mathcal{S}_{\mathcal{H}}^{\psc}(h)=\mathcal{S}^{\psc}(h, \mathcal{H})$.

We now deduce some facts about this construction concerning homotopy. As~an immediate consequence of the convexity of $\mathcal{HD}(p)$, we
obtain the following.
\begin{Proposition}
For any horizontal distributions, $\mathcal{H}, \mathcal{H}'\in
\mathcal{HD}(p)$, the maps $ \mathcal{S}_{\mathcal{H}}^{\psc}$ and $
\mathcal{S}_{\mathcal{H}'}^{\psc}$ are homotopy equivalent.
\end{Proposition}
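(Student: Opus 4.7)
The plan is to build an explicit homotopy by exploiting the convexity of $\mathcal{HD}(p)$ established in the preceding proposition. Given $\mathcal{H}, \mathcal{H}' \in \mathcal{HD}(p)$, set $\mathcal{H}_t := (1-t)\mathcal{H} + t\mathcal{H}'$ for $t \in [0,1]$; convexity guarantees $\mathcal{H}_t \in \mathcal{HD}(p)$, so $t \mapsto \mathcal{H}_t$ is a continuous path from $\mathcal{H}$ to $\mathcal{H}'$ within the space of horizontal distributions.

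Using this path, I would define
\[
H\colon \Riem^{\psc}(B) \times [0,1] \lra \Riem_{\mathcal{S}}^{\psc}(M), \qquad H(h,t) := \mathcal{S}^{\psc}(h, \mathcal{H}_t),
\]
so that $H(\cdot, 0) = \mathcal{S}_{\mathcal{H}}^{\psc}$ and $H(\cdot, 1) = \mathcal{S}_{\mathcal{H}'}^{\psc}$. The content of the argument is then to verify that $H$ is a genuine continuous map into $\Riem_{\mathcal{S}}^{\psc}(M)$. By the O'Neill formula recalled just above, the scalar curvature of $\mathcal{S}(h, \mathcal{H}_t, \tau)$ varies continuously in the triple $(h, \mathcal{H}_t, \tau)$; moreover, the parameters $\mathrm{m}(h)$ and $\mathrm{M}_A(h, \mathcal{H}_t)$ depend continuously on $(h,t)$ since the scalar curvature of $h$ and the tensor $A_{h,\mathcal{H}_t}$ are continuous in their arguments and $M$ is compact. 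Hence $\bar{\tau}(h, \mathcal{H}_t)$, and in turn $H(h,t) = \mathcal{S}(h, \mathcal{H}_t, \bar{\tau}(h, \mathcal{H}_t))$, is continuous in $(h,t)$ and lands in $\Riem_{\mathcal{S}}^{\psc}(M)$.

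The only mild technicality, and what I expect to be the real obstacle, is the degenerate locus where $\mathrm{M}_A(h, \mathcal{H}_t) = 0$, since formula~\eqref{taumin} for $\bar{\tau}$ then becomes undefined. At such points the submersion scalar curvature reduces to $s_h \circ p + \tau^{-1} s_F > 0$ for every $\tau > 0$, so positivity is automatic; a harmless modification of~\eqref{taumin}, for instance replacing $2\mathrm{M}_A(h, \mathcal{H})^{2}$ by $2\mathrm{M}_A(h, \mathcal{H})^{2} + \mathrm{m}(h)$ in the denominator, yields a strictly positive, jointly continuous scaling function which still preserves $s > 0$. With this adjustment, $H$ is a continuous homotopy from $\mathcal{S}_{\mathcal{H}}^{\psc}$ to $\mathcal{S}_{\mathcal{H}'}^{\psc}$, proving the proposition. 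The geometric input, namely convexity of $\mathcal{HD}(p)$, is already at our disposal; what remains is this continuity bookkeeping.
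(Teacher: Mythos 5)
Your argument is exactly the one the paper intends: the proposition is stated as an ``immediate consequence'' of the convexity of $\mathcal{HD}(p)$, and the homotopy $H(h,t)=\mathcal{S}^{\psc}\bigl(h,(1-t)\mathcal{H}+t\mathcal{H}'\bigr)$ is precisely the construction the authors reuse explicitly in the proof of Lemma~\ref{lifthe}. Your observation about the degenerate locus $\mathrm{M}_{A}(h,\mathcal{H}_t)=0$, where formula~\eqref{taumin} is undefined, is a genuine (if harmless) gap in the paper's setup that the authors pass over in silence, and your proposed regularisation of $\bar{\tau}$ is a correct way to close it.
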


\noindent
The lemma below will play an important role later on.
\begin{Lemma}\label{lifthe}
The maps $\mathcal{B}$ and $\mathcal{S}_{\mathcal{H}}^{\psc}$ are homotopy inverse, thus determining a homotopy equi\-valence:
\begin{gather*}
\Riem^{\psc}(B)\simeq\Riem_{\mathcal{S}}^{\psc} (M).
\end{gather*}
\end{Lemma}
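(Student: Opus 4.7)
The plan is to verify one composition by inspection of the definitions and to construct the other homotopy by hand, exploiting the convexity of $\mathcal{HD}(p)$ and the monotonicity of the O'Neill formula in the fibre scaling $\tau$.

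First, $\mathcal{B}\circ\mathcal{S}_{\mathcal{H}}^{\psc}=\id_{\Riem^{\psc}(B)}$ is immediate: by construction $\mathcal{S}_{\mathcal{H}}^{\psc}(h)=\mathcal{S}(h,\mathcal{H},\bar\tau(h,\mathcal{H}))$ is a submersion with base metric $h$, so $\mathcal{B}$ returns $h$. The content of the lemma is therefore to produce a homotopy $\mathcal{S}_{\mathcal{H}}^{\psc}\circ\mathcal{B}\simeq\id$ on $\Riem_{\mathcal{S}}^{\psc}(M)$. Every $\bar g\in\Riem_{\mathcal{S}}^{\psc}(M)$ is determined by a triple $(h_{\bar g},\mathcal{H}_{\bar g},\tau_{\bar g})$ (its base metric, orthogonal horizontal distribution, and fibre scaling), and these data depend continuously on $\bar g$; I want to slide this triple to $(h_{\bar g},\mathcal{H},\bar\tau(h_{\bar g},\mathcal{H}))$ while staying in $\Riem_{\mathcal{S}}^{\psc}(M)$.

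The construction is built in three concatenated stages, using the O'Neill formula
\begin{gather*}
s_M \;=\; s_h\circ p \;+\; \tfrac{1}{\tau}s_F \;-\; \tau|A_{h,\mathcal{H}}|^2,
\end{gather*}
together with the fact that $s_F\geq 0$, which implies that for fixed $(h,\mathcal{H})$ the lower bound $\mathrm{m}(h)+\tfrac{1}{\tau}s_F-\tau\mathrm{M}_A(h,\mathcal{H})^2$ is monotonically decreasing in $\tau$, so that any $\tau\leq\bar\tau(h,\mathcal{H})$ preserves psc by \eqref{taumin}. Stage (a) keeps $h_{\bar g}$ and $\mathcal{H}_{\bar g}$ fixed and slides $\tau$ from $\tau_{\bar g}$ to $\tau_\ast:=\min(\tau_{\bar g},\bar\tau(h_{\bar g},\mathcal{H}_{\bar g}))$ along $(0,\infty)$. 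Stage (b) linearly interpolates $\mathcal{H}_t:=(1-t)\mathcal{H}_{\bar g}+t\mathcal{H}$, using convexity of $\mathcal{HD}(p)$, while taking the fibre scaling at time $t$ to be $\tau_t:=\min(\tau_\ast,\bar\tau(h_{\bar g},\mathcal{H}_t))$, so that $\tau_t\leq\bar\tau(h_{\bar g},\mathcal{H}_t)$ guarantees $s_M>0$. Stage (c) slides $\tau_1$ along $(0,\infty)$ to $\bar\tau(h_{\bar g},\mathcal{H})$; since the full path lies in $\bigl(0,\bar\tau(h_{\bar g},\mathcal{H})\bigr]$, monotonicity again preserves psc. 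The endpoint is $\mathcal{S}(h_{\bar g},\mathcal{H},\bar\tau(h_{\bar g},\mathcal{H}))=\mathcal{S}_{\mathcal{H}}^{\psc}\circ\mathcal{B}(\bar g)$, as desired.

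The main technical obstacle is continuity jointly in $(\bar g,t)$. The functions $\mathrm{m}(h)$ and $\mathrm{M}_A(h,\mathcal{H})$ depend continuously on their arguments, so $\bar\tau$ does too, away from the integrable locus $\mathrm{M}_A=0$. One must smooth the two $\min$'s above into genuinely continuous choices of fibre scaling, and extend $\bar\tau$ continuously across $\mathrm{M}_A=0$; this is harmless, since on that locus every $\tau$ preserves psc and one can use any locally defined positive upper bound. Once this continuity is checked, concatenating the three stages produces the required homotopy.
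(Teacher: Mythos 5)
Your proposal is correct and follows the same underlying strategy as the paper's proof: verify $\mathcal{B}\circ\mathcal{S}_{\mathcal{H}}^{\psc}=\id$ by inspection, then use convexity of $\mathcal{HD}(p)$ to interpolate the horizontal distribution while controlling positivity via the bound $\bar{\tau}$ from~\eqref{taumin}. Your version is, however, more complete in one respect worth noting: the paper's single-formula homotopy $H_t(\bar{g})=\mathcal{S}\big(\bar{g}_B,(1-t)\mathcal{H}+t\mathcal{H}_{\bar{g}},\bar{\tau}(\bar{g}_B,(1-t)\mathcal{H}+t\mathcal{H}_{\bar{g}})\big)$ ends at $t=1$ with fibre scaling $\bar{\tau}(\bar{g}_B,\mathcal{H}_{\bar{g}})$ rather than the original $\tau_{\bar{g}}$, so as written it does not terminate at the identity; your stages (a) and (c), which slide the scaling parameter using the monotonicity of the O'Neill formula in $\tau$ (valid since $s_F\geq 0$), supply exactly the missing interpolation. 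Your flagged concern about continuity of $\bar{\tau}$ across the locus $\mathrm{M}_A=0$ is a legitimate point that the paper also elides; note only that the two $\min$'s themselves need no smoothing, since the minimum of continuous functions is already continuous, which suffices for a homotopy.
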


\begin{proof}
The composition $\mathcal{B}\circ\mathcal{S}_{\mathcal{H}}^{\psc}$ is
precisely the identity map on $\Riem^{\psc}(B)$. Consider now an
arbitrary element $\bar{g}\in\Riem_{\mathcal{S}}^{\psc} (M)$. This
element is uniquely represented as a triple $\bar{g}=(\bar{g}_{B},
\mathcal{H}_{\bar{g}}, \tau_{\bar{g}})$, consisting of a base metric
$\bar{g}_{B}=\mathcal{B}(\bar{g})$, a horizontal distribution and a
scaling constant. Now,
\begin{gather*}
\mathcal{S}_{\mathcal{H}}^{\psc}\circ
\mathcal{B}(\bar{g})=\mathcal{S}(\bar{g}_{B}, \mathcal{H},
\bar{\tau}(\bar{g}_{B}, \mathcal{H})).
\end{gather*}
Thus, the desired homotopy is
given by
\begin{gather*}
H_{t}(\bar{g}):=\mathcal{S}(\bar{g}_{B},
(1-t)\mathcal{H}+t\mathcal{H}_{\bar{g}}, \bar{\tau}(\bar{g}_{B},
(1-t)\mathcal{H}+t\mathcal{H}_{\bar{g}})),
\end{gather*}
where $t\in [0,1]$. That the resulting submersion metric
$H_{t}(\bar{g})$ has positive scalar curvature for all $t$ is
guaranteed by the construction of the function $\bar{\tau}$.
\end{proof}
We now consider a variation of the scenario above which will be of
great importance for us. We~replace the bundle $p\colon M\rightarrow B$,
with a smooth fibre bundle $\bar{p}\colon\bar{E}\to \bar{B}$. In this case,
the base $\bar{B}$ and total space are smooth compact bordisms of
manifolds, denoted $\bar B\colon B_0 \bord B_1$ and $\bar E\colon E_0\bord
E_1$, respectively. Here $E_i=E|_{B_i}$ for $i\in\{0,1\}$. This bundle
is also a $G$-bundle with fibre $L$, as above, and with structure map
$\bar{f}\colon\bar{B}\to BG$ restricting to structure maps $f_{i}\colon B_{i}\to
BG$ of the corresponding boundary bundles
$p_{i}=\bar{p}|_{E_i}\colon E_{i}\to B_i$. These boundary bundles are of
exactly the type we discussed earlier with the role of $M$ played by
$E_i$. We~equip the bordism $\bar B$ with collars $c_i\colon B_i\times
[0,2)\to \bar{B}$, $i=0,1$, along the boundary $\p \bar B$. We~will
 now introduce the following simplifying notation:
\begin{gather*}
\bar{B}_{c_i}:=c_{i}(B_0\times [0,1]),\qquad
\bar{B}_{c_i}':=\bar{B}\setminus \bar{B}_{c_i},
\\
\bar{E}_{c_i}:=\bar{E}|_{\bar{B}_{c_i}}\qquad \text{and}\qquad
\bar{E}_{c_i}':=\bar{E}\setminus \bar{E}_{c_i}, \qquad \text{where}\quad
i\in\{0,1\}.
\end{gather*}
As usual, we assume that $L$ is equipped with a fixed metric $g_L$ of
non-negative scalar curvature. Suppose that $h_{0}$ and $h_1$ are a
pair of psc-metrics on $B_0$ and $B_1$ and that $\bar{h}$ is a
psc-metric in~the (assumed to be non-empty) space of psc-bordism
metrics $\Riem^{\psc}\big(\bar{B}, \p \bar{B}\big)_{h_0, h_1}$. Given some
horizontal distribution $\bar{\mathcal{H}}$ on the bundle $\bar{p}$,
the construction above allows us to ``lift" the psc-metric $\bar{h}$
to psc-submersion metrics on the total space $\bar{E}$. In particular,
to ensure our lifted metric has a product structure near the boundary
of $\bar{E}$, we can impose the following admissibility condition on
horizontal distributions.
\begin{Definition}
A horizontal distribution, $\bar{\mathcal{H}}$ on the bundle
$\bar{p}\colon \bar{E}\to \bar{B}$ is said to be {\em admissible} provided
it takes the form of a product $\mathcal{H}_{i}\times\mathbb{R}$ on
the restrictions $\bar{E}_{c_i}\to \bar{B}_{c_i}$ for each
$i\in\{0,1\}$.
\end{Definition}
\noindent We denote by $\mathcal{HD}_{\mathrm{adm}}(\bar{p})$, the
subspace of admissible horizontal distributions on $\bar{p}$. That
this space is convex follows from an elementary exercise.

For our purpose, we need to do something slightly more delicate. Let
us assume that the boundary component $E_0$ of the total space is
already equipped with a psc-metric $g_0$, one deter\-mi\-ning a Riemannian
submersion of positive scalar curvature, $p_0\colon (E_0, g_0)\to (B_0,
h_0)$, with respect to the fibre $(L, \tau_0 g_L)$ for some constant
$\tau_0>0$, and horizontal distribution
$\mathcal{H}_0\in\mathcal{HD}(p_0)$. Thus,
$g_0\in\Riem_{\mathcal{S}}^{\psc}(E_0)$ and more specifically lies in
the image of the map
\begin{gather*}
S_{\mathcal{H}_0}^{\psc}\colon \ \Riem^{\psc}(B_0)\to
\Riem_{\mathcal{S}}^{\psc}(E_0).
\end{gather*}
We wish to extend the psc-metric
$g_0$ to a certain psc-metric on the total space $\bar{E}$. This is
done in the following lemma.
\begin{Lemma}\label{pscliftlemma} Let $\bar{p}\colon \bar{E}\to \bar{B}$
 denote the bundle described above with structure map
 $\bar{f}\colon\bar{B}\to BG$ and fibre $L$, equipped with a fixed metric
 $g_L$ of non-negative scalar curvature. Let
 \begin{gather*}
 p_0=\bar{p}|_{E_0}\colon\ (E_0, g_0)\to (B_0, h_0)
 \end{gather*}
 be a psc-Riemannian
 submersion with respect to the fibre $(L, \tau_0 g_L)$ for some
 constant $\tau_0>0$, and horizontal distribution
 $\mathcal{H}_0\in\mathcal{HD}(p_0)$. For~compact spaces $K_1$ and
 $K_2$, let $\bar{h}(x)\in \Riem^{\psc}\big(\bar{B}, \p \bar{B}\big)$, be~a~continuous family of psc-bordism metrics on $\bar{B}$, indexed by
 $x\in K_1$, and let $\bar{\mathcal{H}}(y)\in
 \mathcal{HD}_{\mathrm{adm}}(\bar{p})$ be a continuous family of
 admissible horizontal distributions indexed by $y\in K_2$. We~further assume that, for all $x\in K_1$ and $y\in K_2$,
 $\bar{h}(x)|_{B_0}=h_0$ and $\bar{\mathcal{H}}(y)|_{E_0}=\mathcal{H}_0$.

Then there is a constant $\bar{\tau}_{\min}>0$ and a corresponding
family of psc-metrics $\bar{g}(x,y)$, $(x,y)\in K_1\times K_2$ on
$\bar{E}$ which satisfies the following conditions.
\begin{enumerate}\itemsep=0pt
\item[$(i)$] $\bar{p}\colon \big(\bar{E}, \bar{g}(x,y)\big)\to
 \big(\bar{B},\bar{h}(x)\big)$ is a Riemannian submersion with respect to the
 horizontal distribution $\bar{\mathcal{H}}(y)$.
\item[$(ii)$] Each psc-metric $\bar{g}(x,y)$ takes the form of a
 product ${g}_{i}(x,y)+{\rm d}t^{2}$ near the boundary components $E_i$,
 and satisfies ${g}_{0}(x,y)=g_0$ on $E_0$.
\item[$(iii)$] The Riemannian submersion obtained by restriction
 to $E_1$,
 \begin{gather*}
 p_1=\bar{p}|_{E_1}\colon\ \big(E_1,
 g_1(x,y):=\bar{g}(x,y)|_{E_1}\big)\to\big(B_1, h_1(x):=\bar{h}(x)|_{B_1}\big),
 \end{gather*}
 has fibre $(L, \bar{\tau}_{\min} g_L)$.
\end{enumerate}
\end{Lemma}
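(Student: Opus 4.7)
The plan is to produce $\bar g(x,y)$ as a Riemannian submersion $\bar p\colon(\bar E,\bar g(x,y))\to(\bar B,\bar h(x))$ with horizontal distribution $\bar{\mathcal H}(y)$, but with a \emph{base-point-dependent} fibre scaling. Fix once and for all a smooth function $\tau\colon\bar B\to(0,\infty)$ that is identically $\tau_0$ on $c_0(B_0\times[0,\tfrac14])$, identically equal to a small constant $\bar\tau_{\min}>0$ (to be chosen) on $\bar B\setminus c_0(B_0\times[0,\tfrac34])$---in particular on the collar of $B_1$---and smoothly interpolates on $c_0(B_0\times[\tfrac14,\tfrac34])$ via a fixed profile. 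Then let $\bar g(x,y)$ be the unique Riemannian metric on $\bar E$ for which $\bar p$ is a submersion with horizontal distribution $\bar{\mathcal H}(y)$, horizontal metric $\bar p^*\bar h(x)$, and fibre metric $\tau(b)\,g_L$ over every $b\in\bar B$; the $G$-invariance of $g_L$ makes this globally well defined, and continuity in $(x,y)$ is automatic from the continuity of the construction in $\bar h(x)$ and $\bar{\mathcal H}(y)$.

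Property $(i)$ is built in. Near $E_0$ the admissibility hypotheses $\bar h(x)=h_0+\mathrm{d}t^2$ and $\bar{\mathcal H}(y)=\mathcal H_0\times\mathbb R$ together with $\tau\equiv\tau_0$ force $\bar g(x,y)=g_0+\mathrm{d}t^2$; near $E_1$ the same reasoning with $\tau\equiv\bar\tau_{\min}$ yields $\bar g(x,y)=\mathcal S(h_1(x),\mathcal H_1(y),\bar\tau_{\min})+\mathrm{d}t^2$. These give $(ii)$ and $(iii)$. On the regions where $\tau$ is constant the fibres are pairwise isometric and totally geodesic, so the O'Neill formula applies:
\[ s_{\bar g} \;=\; s_{\bar h(x)}\circ\bar p \;+\; s_L/\tau \;-\; \tau\,|A_{\bar h(x),\bar{\mathcal H}(y)}|^2. \]
On the $\tau_0$-region this equals $s_{g_0}\circ\bar p>0$ by hypothesis, and on the $\bar\tau_{\min}$-region the constants $m:=\min_{x,b}s_{\bar h(x)}(b)>0$ and $M:=\max_{x,y,e}|A|<\infty$ furnished by compactness of $K_1\times K_2$ yield $s_{\bar g}\ge m/2$ as soon as $\bar\tau_{\min}\le m/(2M^2)$.

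The hard part is the transition collar $c_0(B_0\times[\tfrac14,\tfrac34])$. There, since $\bar h(x)=h_0+\mathrm{d}t^2$ and $\bar{\mathcal H}(y)=\mathcal H_0\times\mathbb R$ on the whole $c_0$-neighbourhood, the local trivialization identifies $\bar E$ with $E_0\times[\tfrac14,\tfrac34]$ and $\bar g(x,y)$ with a metric $\gamma(t)+\mathrm{d}t^2$, independent of $(x,y)$, where $\gamma(t):=h_0^{\mathrm{horiz}}+\tau(t)\,g_L^{\mathrm{vert}}$ is a path in $\Riem^{\psc}(E_0)$ joining $g_0$ to $g_0^{\min}:=\mathcal S(h_0,\mathcal H_0,\bar\tau_{\min})$ through submersion metrics of decreasing $\tau$ (each of scalar curvature at least $m$, as $s_{\gamma(t)}$ is monotone non-increasing in $\tau$). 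Its scalar curvature equals $s_{\gamma(t)}$ plus a correction computed from $L_t=\tfrac12\gamma(t)^{-1}\dot\gamma(t)=\tfrac{\dot\tau}{2\tau}\,\proj^{\mathrm{vert}}$, involving only $\dot\tau/\tau$ and $\ddot\tau/\tau$. The technical heart of the proof is to pick the profile and $\bar\tau_{\min}$ so that $s_{\gamma(t)}$ dominates this correction: when $s_L>0$ the $s_L/\tau$ boost as $\tau\to\bar\tau_{\min}$ absorbs even large corrections, while when $s_L=0$ a logarithmic profile in $\tau$ keeps $|\dot\tau/\tau|$ bounded independently of $\bar\tau_{\min}$, and the fixed lower bound $s_{h_0}\ge m$ determines an admissible $\bar\tau_{\min}$.
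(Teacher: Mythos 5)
Your overall architecture is the same as the paper's: keep $\bar g=g_0+{\rm d}t^2$ near $E_0$, rescale only the fibre metric inside the product collar, and then use the O'Neill/canonical variation formula with $\bar\tau_{\min}:=\min_{(x,y)}\bar\tau\big(\bar h(x),\bar{\mathcal H}(y)\big)$ on the rest of $\bar E$, the minimum existing by compactness of $K_1\times K_2$. The parts of your argument on the constant-$\tau$ regions are fine.

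The gap is in the transition region, and specifically in the final sentence. You insist on interpolating $\tau$ from $\tau_0$ down to $\bar\tau_{\min}$ over a sub-interval of the \emph{given} collar, of fixed length (your $c_0(B_0\times[\tfrac14,\tfrac34])$). Writing $u=\log\tau$, the curvature correction you describe involves $\dot u^2$ and $\ddot u$, while a logarithmic profile over an interval of length $\tfrac12$ forces $|\dot u|\equiv 2\big(\log\tau_0-\log\bar\tau_{\min}\big)$, hence $\ddot\tau/\tau=\dot u^2\to\infty$ as $\bar\tau_{\min}\to 0$. So the claim that a logarithmic profile keeps $|\dot\tau/\tau|$ bounded independently of $\bar\tau_{\min}$ is false on a fixed-length interval; it is only true if the interval is allowed to lengthen like $\log(\tau_0/\bar\tau_{\min})$. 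In the scalar-flat case $s_{g_L}=0$ (which includes the most important example $L=S^1$, the $\eta$-singularity) there is no $s_L/\tau$ boost, the base bound $s_{\gamma(t)}\ge m$ is fixed, and $\bar\tau_{\min}=m/(2M^2)$ can be arbitrarily small depending on the tensor $A$; your construction then produces negative scalar curvature in the transition. (In the case $s_{g_L}>0$ your argument can be salvaged: a profile with $\dot u\sim -\epsilon\,{\rm e}^{-u/2}$ completes the descent in time $O(\sqrt{\tau_0}/\epsilon)$, bounded independently of $\bar\tau_{\min}$, though even then you must check it fits in length $\tfrac12$.)

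The paper avoids this by not working in the fixed collar at all: it replaces $E_0\times[0,1]$ by a cylinder $E_0\times[0,b_\tau]$ whose length $b_\tau$ is allowed to depend on $\tau$ and to be arbitrarily large, and invokes the standard Gromov--Lawson slow-rescaling lemma (\cite[Lemma~3]{GL}, \cite[Lemma~1.3]{W0}) to choose the monotone function $\gamma_\tau$ on $[0,b_\tau]$ so that $g(\gamma_\tau(t))+{\rm d}t^2$ has positive scalar curvature. Inserting this long product cylinder changes nothing up to diffeomorphism. To repair your proof, make the same move: either stretch the collar parametrization or splice in a cylinder of length depending on $\bar\tau_{\min}$ before carrying out the interpolation.
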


\begin{proof}
We begin by specifying $\bar{g}=\bar{g}(x,y)$ as, for all $(x,y)$, the
product $g_0+{\rm d}t^2$ near the boundary component $E_0$. More precisely,
$\bar g:= g_0+{\rm d}t^2$ on $\bar{E}_{c_0}\cong E_{0}\times [0,1]$. Thus,
near $E_0$, the metric $\bar g$ forms part of a submersion $p_0\times
\mathrm{Id}\colon \big(E_0\times I, g_0+{\rm d}t^2\big)\rightarrow \big(B_0\times I,
h_0+{\rm d}t^{2}\big)$ with fibre $(L,\tau_0 g_L)$ and horizontal distribution
$\bar{\mathcal{H}}|_{\bar{E}_{c_0}}=\mathcal{H}_0\times \mathbb{R}$.

Our next task is to extend this metric as a psc-submersion over the
rest of $\bar{E}$. To maintain positive scalar curvature it may be
necessary to continuously rescale the fibre metric, but in such a way
as to preserve the original fibre metric near $E_0$. To simplify
things, we make adjustments to the fibre metric only in
$\bar{E}_{c_0}$ (and away from $E_{0}$), where the metric, $\bar g$,
takes the form of a~product $\bar g=g_0+{\rm d}t^{2}$. Consider, for
constants $\tau>0$ and $b_{\tau}\geq 2$, smooth monotonic functions
$\gamma_{\tau}\colon [0, b_{\tau}]\rightarrow [0, \tau]$ satisfying
\begin{enumerate}\itemsep=0pt
\item[$(i)$] $\gamma_{\tau}(t)=\tau_0$ when $t\in[0,1]$,
\item[$(ii)$] $\gamma_{\tau}(t)=\tau$ when $t\in[b_{\tau}-1,b_{\tau}]$.
\end{enumerate}
We define $g_0(t)$ to be the submersion metric obtained on
$E_0\times\{t\}\to B_0\times\{t\}$ with base metric~$h_0$ and
horizontal distribution $\mathcal{H}_0$ as before but replacing the
fibre metric $\tau_0 g_{L}$ with~$\gamma_{\tau}(t)g_{L}$. Now replace
the submersion $p_0\times \Id\colon \big(\bar{E}_{c_0}=E_0\times I,
g_0+{\rm d}t^{2}\big)\rightarrow \big(B_0\times I, h_0+{\rm d}t^{2}\big)$, with $p_0\times
\Id\colon \big(E_{0}\times[0,b_\tau], g({\gamma_{\tau}(t)})+{\rm d}t^{2}\big)\rightarrow
\big(B_0\times [0,b_\tau], h_0+{\rm d}t^{2} \big)$. The following proposition is an~easy consequence of a well-known rescaling technique; see for example
\cite[Lemma~3]{GL} or~\cite[Lemma~1.3]{W0}.
\begin{Proposition}
For any constant $\tau>0$, the smooth function $\gamma_{\tau}$ above
can be chosen so that the metric $g({\gamma_{\tau}(t)})+{\rm d}t^{2}$ has
positive scalar curvature.
\end{Proposition}
\noindent We will specify the choice of $\tau>0$ shortly. In the mean
time, let us assume that $\gamma_{\tau}$ has been chosen to guarantee
positive scalar curvature. Note that near $t=0$, the metric remains
unchanged.

Consider now the other end of the cylinder, where $t=\tau_b$. Here we
have a product of~psc-submersion metrics differing only from that at
the $t=0$ end in that the fibre metric is now~$\tau g_{L}$. No change
has been made to the base metric and it is assumed to extend over the
base bordism~$\bar{B}$ as a psc-metric $\bar{h}$ which takes the form
of a product metric ${h}_{1}+{\rm d}t^{2}$ near the boundary component~$B_{1}$. For~each $y$, the admissible horizontal distribution
$\bar{\mathcal{H}}(y)$ extends $\mathcal{H}_0\times \mathbb{R}$ over
the rest of $\bar{E}$, inducing, outside of the collar
$\bar{E}_{c_0}$, Riemannian submersion metrics with base metric
$\bar{h}|_{\bar{B}_{c_0}'}(x)$ and fibre metric $\tau g_{L}$. We~need
to ensure positive scalar curvature by choosing appropriately small
$\tau$.

Suppose for any $(x,y)$, we choose $\tau=\bar{\tau}\big(\bar{h}(x),
\bar{\mathcal{H}}(y)\big)>0$, as defined in (\ref{taumin}). This ensures,
via the canonical variation formula \cite[Proposition~9.70]{Besse}, that
the submersion metric
\begin{gather*}
\mathcal{S}\big(\bar{h}(x), \bar{\mathcal{H}}(y),
\bar{\tau}\big(\bar{h}(x), \bar{\mathcal{H}}(y)\big)\big)
\end{gather*}
on the bundle
$\bar{E}_{c_0}'\to \bar{B}_{c_{0}}'$ with base metric
$\bar{h}(x)|_{\bar{B}_{c_{0}'}}$, fibre metric $\bar{\tau}\big(\bar{h}(x),
\bar{\mathcal{H}}(y)\big) g_{L}$ and horizontal distribution
$\bar{\mathcal{H}}(y)|_{\bar{E}_{c_0}'}$ has positive scalar
curvature. Thus, we set
\begin{gather*}
\tau=\bar{\tau}_{\min}:=\min\big\{\bar{\tau}\big(\bar{h}(x),
\bar{\mathcal{H}}(y)\big)\colon (x,y)\in K_1\times K_2\big\},
\end{gather*}
to obtain a single
scaling factor which works for all $(x,y)$. Finally, the metric
$\bar{g}(x,y)$ is defined to be the submersion metric determined by
the base $\big(\bar{B}, \bar{h}(x)\big)$, fibre $L$ with varying metric~$\gamma_{\bar{\tau}_{\min}}(t) g_L$ on $E_{c_0}$ and
$\bar{\tau}_{\min} g_{L}$ on the rest of~$\bar{E}$, as well as
horizontal distribution $\bar{\mathcal{H}}(y)$.
\end{proof}

\subsection{The case of manifolds with fibred singularities}
We now return to the manifolds with fibred singularities introduced in
Section~\ref{Intro}. Recall that~$X$ is assumed to be a smooth
manifold with boundary $\p X\neq \varnothing$, which is the total space
of a smooth bundle $\p X \to \beta X$ with the fibre $L$ and structure
map $f\colon \beta X\to BG$. Here $G$ is a subgroup of the isometry group of
$g_{L}$, a Riemannian metric on $L$. Usually the Riemannian manifold
$(L, g_L)$ is referred to as the \emph{link}.

As explained in Section~\ref{Intro}, we obtain a { manifold with
 fibred singularities}, $X_{\Sigma}$, given as $X_{\Sigma}:=X\cup_{\p
 X} -N(\beta X)$, by replacing the fibre $L$ with it's cone, $C(L)$,
to obtain bundle $N(\beta X) \to \beta X$. Such a manifold a
manifold is referred to as an {\em $(L,G)$-manifold.} For our
constructions, we will require some further constraints on the link
$(L,g_{L})$, characterised by the following definition.
\begin{Definition}
 A link $(L,g_L)$ is \emph{simple} if it satisfies one of the
 following conditions:
\begin{enumerate}\itemsep=0pt
\item[$(a)$] $(L,g_L)$ is homogeneous with scalar curvature, $s_{g_L}$,
 a positive constant,
\item[$(b)$] $(L,g_L)=\big(S^1,{\rm d}\theta^2\big)$,
\item[$(c)$] $L =\mathbb{Z}_k$.
\end{enumerate}
\end{Definition}
Henceforth, we assume that $(L, g_{L})$ is a simple link. Before
going forward with our constructions, we should clarify why the
condition that the scalar curvature $s_{g_L}$ is non-negative constant
is important here.

In the case $(a)$, $s_{g_L}$ is a positive constant. We~define the cone
metric $g_{C(L)}$ on the cone,~$C(L)$, by
\begin{gather}\label{conemetricdef}
g_{C(L)}={\rm d}t^{2}+c_{L}^{-2}t^{2}g_{L},
\end{gather}
where $c_L=\sqrt{\frac{\ell(\ell-1)}{s_{g_L}}}$ and $\dim
L=\ell$. This is a warped product metric on $(0,1]\times L$ away from
the cone point (where $t=0$) and is scalar flat, as demonstrated in
the appendix.

The case $(b)$, when $L=S^1$, has special features. As~$\dim L=\ell=1$
here, the definition of the metric $g_{C(L)}$ above coincides with the
more general cone metric construction detailed in~the appendix for the
case when $g_{L}$ is scalar flat. In particular, formula~\eqref{scalarflatcone} in the appendix gives that the scalar curvature
of the metric $g_{C(S^1)}$ is identically zero. Moreover, any smooth
$S^1$-bundle $p\colon Y \to B$ admits a free $S^1$-action on the manifold
$Y$ such that $B=Y/S^1$. Then, according to a result by
B\'erard-Bergery~\cite[Theorem~C]{B-B}, a manifold $Y$ admits an
$S^1$-equivariant psc-metric if and only if the orbit space, the
manifold $B=Y/S^1$ admits a psc-metric.

In the case $(c)$ $L=\mathbb{Z}_k$. Then the cone $C(\mathbb{Z}_k)$ has
the standard Euclidian metric, and we do not make any further
assumptions. In any case, we can assume that the cone metric
$g_{C(L)}$ is always scalar-flat (outside of its vertex).

We return now to the manifold $X$, which forms part a pseudo-manifold
with $(L,G)$-sin\-gu\-la\-ri\-ties, $X_{\Sigma}=X\cup_{\p X} -N(\beta X)$.
Recall that $\p X$ forms part of $G$-bundle, $p\colon \p X\to \beta X$ with
$G$ a~subgroup of the isometry group of a fixed metric $g_L$ on the
fibre $L$. We~denote by $f\colon \beta X \to BG$, a structure map for this
bundle. Throughout, $\dim X = n$, $\dim L= \ell$ and we reiterate
that the link $(L, g_{L})$ is assumed to be simple.

Bearing in mind the notions described in Section~\ref{submersionsection}, we have a map
\begin{gather}\label{Slift}
\mathcal{S}\colon \ \Riem^{\psc}(\beta X)\times \mathcal{HD}(p)\times
(0,\infty)\longrightarrow \Riem (\p X)
\end{gather}
which sends any triple $(h, \mathcal{H}, \tau)$ to the unique
submersion metric on $\p X$ with base $(\beta X, h)$, fibre $(L, \tau
g_L)$ and horizontal distribution $\mathcal{H}$. Recall that the image
of $\mathcal{S}$ is denoted $\Riem_{\mathcal{S}}(\p X)$ and we further
denote by $\Riem_{\mathcal{S}}^{\psc}(\p X)$ its subspace of
psc-metrics. Thus, elements of $\Riem_{\mathcal{S}}^{\psc}(\p X)$ are
psc-submersion metrics on the total space $\p X$ with respect to some
base metric in $\Riem^{\psc}(\beta X)$ and fibre metric $\tau g_L$ for
some $\tau>0$. Recall that, for any $h\in \Riem^{\psc}(\beta X)$, the
space $\Riem_{\mathcal{S}}^{\psc}(\p X)_h$ consists of all
psc-submersion metrics in $\Riem_{\mathcal{S}}^{\psc}(\p X)$ with base
metric $h$. Recall also that by specifying a parameter $\bar{\tau}(h,
\mathcal{H})$, we obtained ``lifting" maps
\begin{align*}
 \mathcal{S}^{\psc}\colon\ \Riem^{\psc}(\beta X)\times\mathcal{HD}(p)\to &\
 \Riem_{\mathcal{S}}^{\psc} (\p X),\nonumber
 \\
(h,\mathcal{H})\mapsto&\
\mathcal{S}(h,\mathcal{H},\bar{\tau}(h, \mathcal{H})),
\end{align*}
and
\begin{gather}\label{SliftplusH}
 \mathcal{S}_{\mathcal{H}}^{\psc}\colon\ \Riem^{\psc}(\beta X)\rightarrow
 \Riem_{\mathcal{S}}^{\psc} (\p X),
\end{gather}
defined $\mathcal{S}_{\mathcal{H}}^{\psc}(h):=\mathcal{S}^{\psc}(h,
\mathcal{H})$, for some fixed horizontal distribution $\mathcal{H}$.
In particular, we recall that the homotopy type of the map
$\mathcal{S}_{\mathcal{H}}^{\psc}$ is independent of the choice of
$\mathcal{H}$ and, moreover, the map
$\mathcal{S}_{\mathcal{H}}^{\psc}$ forms one direction in a homotopy
equivalence (Lemma~\ref{lifthe} above); its homotopy inverse is the
map, $\mathcal{B}\colon\Riem_{\mathcal{S}}^{\psc}(\p X)\to
\Riem^{\psc}(\beta X)$, sending submersions to their base metrics.
\begin{Remark}
We emphasise that the fibre metric $g_{L}$ and structure map $f\colon \beta
X\to BG$ are assumed to be fixed throughout. Thus, while the map
$\mathcal{S}$ (along with $\mathcal{S}^{\psc}$ and
$\mathcal{S}_{\mathcal{H}}^{\psc}$) depend of~course on these objects,
and could be denoted by something like $\mathcal{S}_{f, g_{L}}$, we
will not feature them in the notation.
\end{Remark}

\subsection{Defining well-adapted metrics}\label{welladaptsec}
Returning to the manifold $X$, we consider the space of all
psc-metrics, $\Riem^{\psc}(X, \p X)$, which take a product structure
near the boundary. Consider now the restriction map
\begin{gather*}
 \mathrm{res}\colon\ \Riem^{\psc}(X, \p X) \to \Riem^{\psc}(\p X),\qquad
\mathrm{res}\colon\ g \mapsto g|_{\p X}.
\end{gather*}
This map is very important for us because of the following fact:

\begin{Theorem}[\cite{Ch,EF}]
 The restriction map $\mathrm{res}\colon \Riem^{\psc}(X, \p X) \to
 \Riem^{\psc}(\p X)$ is a Serre fibre bundle.
\end{Theorem}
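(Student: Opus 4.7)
The plan is to follow the Chernysh--Ebert--Frenck strategy and construct an explicit local trivialization of $\mathrm{res}$ around each point of its image; local triviality over an open cover immediately implies the Serre fibration property. The central analytic input is a ``slow interpolation'' lemma: for any compact parameter space $K$ and any continuous family of smooth paths $(s \mapsto g_{u,s})_{u \in K}$ in $\Riem^{\psc}(\p X)$, there exists $T > 0$ such that the cylinder metric $g_{u,t/T} + \mathrm{d}t^{2}$ on $\p X \times [0,T]$ has positive scalar curvature for every $u \in K$. I would prove this by the standard rescaling argument: on a cylinder $\p X \times \R$, the scalar curvature of $g_s + \mathrm{d}t^{2}$ equals $s_{g_s}$ plus terms polynomial in $\p_s g_s$ and $\p_s^2 g_s$; reparametrizing $s = t/T$ scales these error terms by $T^{-1}$ and $T^{-2}$ respectively, and the uniform positive lower bound on $s_{g_{u,s}}$ (available from compactness of $K$ and $\p X$ together with continuity of the family) dominates once $T$ is large enough.

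Next, fix $g_{0}\in\Riem^{\psc}(\p X)$ in the image of $\mathrm{res}$. Using openness of the psc condition and compactness of $\p X$, I would choose an open neighborhood $U \ni g_{0}$ on which the linear interpolation $s \mapsto (1-s)g_{0} + s g'$ stays in $\Riem^{\psc}(\p X)$, then reparametrize via a smooth cutoff so that each such path $\gamma_{g'}$ is constant near its endpoints. Applying the key lemma with $K$ a compact subneighborhood of $U$ yields a single collar length $T > 0$ such that $\gamma_{g'}(t/T) + \mathrm{d}t^{2}$ is psc on $\p X \times [0,T]$ for every $g' \in U$.

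With these ingredients, define a trivialization $\Phi_{U}\colon U \times \mathrm{res}^{-1}(g_{0}) \to \mathrm{res}^{-1}(U)$ as follows. Given $h \in \mathrm{res}^{-1}(g_{0})$ and $g' \in U$, excise a sub-collar from the product region $c(\p X \times [0,1])$ of $h$ and replace it with the cylinder $(\p X \times [0,T],\ \gamma_{g'}(t/T) + \mathrm{d}t^{2})$, re-identified with $c$ via a fixed diffeomorphism of intervals. Because $\gamma_{g'}$ is locally constant near its endpoints, the splice is smooth and the resulting metric has the product form $g' + \mathrm{d}t^{2}$ near $\p X$; thus $\Phi_{U}$ lands in $\mathrm{res}^{-1}(U)$ and covers the identity on $U$. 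Continuity of $\Phi_{U}$ in both variables is manifest, and its inverse is obtained by running the same procedure with the reversed paths, giving a homeomorphism and hence local triviality.

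The main obstacle is the slow-interpolation lemma itself: one must secure a collar length $T$ that is \emph{uniform} over the entire compact family of paths, together with a cutoff scheme that makes each path constant near its endpoints without destroying positivity. Both reduce to standard but careful compactness-and-continuity arguments applied to the O'Neill-type scalar curvature formula on the cylinder. A secondary technical point is the continuity of the assignment $U \ni g' \mapsto \gamma_{g'}$, which is immediate because we use linear interpolation on the convex open set $U \subset \Riem(\p X)$.
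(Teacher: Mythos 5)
The paper does not prove this statement itself --- it is quoted from Chernysh and Ebert--Frenck --- so your proposal has to be measured against their argument. Your slow-interpolation lemma is fine and standard (it is the parametrized Gromov--Lawson ``isotopy implies concordance'' estimate, uniform over a compact family), but the step where you upgrade it to \emph{local triviality} has a genuine gap, and it sits exactly where the known proofs have to work hardest. The map $\Phi_{U}(g',h)$, ``splice the interpolation cylinder from $g_{0}$ to $g'$ into the collar of $h$'', is not a homeomorphism onto $\mathrm{res}^{-1}(U)$. It is not surjective: a general $h'\in\mathrm{res}^{-1}(g')$ carries only the product collar $g'+\mathrm{d}t^{2}$ of length $1$ demanded by the definition of $\Riem^{\psc}(X,\p X)$, and need not contain an isometric copy of your specific spliced cylinder, so it is not in the image of $\Phi_{U}$. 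And the proposed inverse is not an inverse: splicing in the path from $g_{0}$ to $g'$ and then the reversed path does not return the original metric --- it returns the original metric with two extra interpolation cylinders inserted, which is only isotopic to it, not equal. So $\Phi_{U}$ is at best a fibrewise homotopy equivalence over $U$; that does not give a fibre bundle, and without further machinery (Dold's theorem, which needs numerability and yields Hurewicz-type conclusions, or a quasifibration argument, which yields strictly less) it does not give the Serre lifting property either.

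The actual proofs do not establish local triviality at all; they verify the homotopy lifting property for disks directly, and the crux there is the same difficulty your construction meets at $t=0$: the lift of a homotopy $g_{u,t}$ must agree on the nose with the prescribed initial lift $h_{u}$ at $t=0$, so one cannot simply glue a fixed interpolation collar onto $h_{u}$. Ebert--Frenck resolve this by modifying $h_{u}$ only inside its existing product collar, inserting the interpolation along the truncated path $s\mapsto g_{u,\,t\lambda(s)}$ for a cutoff $\lambda$ (so that at $t=0$ nothing has changed and at the boundary the restriction is $g_{u,t}$), and controlling positivity uniformly in $(u,t)$ by a compression of the collar coordinate. If you want a self-contained proof, that direct verification is the route to take; your splicing map is the right tool for showing that fibres over psc-isotopic boundary metrics are homotopy equivalent, but not for local triviality.
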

We will make significant use of this fact later on. For~now however,
we consider the pre-image of the space $\Riem_{\mathcal{S}}^{\psc} (\p
X)$.
\begin{Definition}
The space
\begin{gather*}
\Riem_{\mathcal{S}}^{\psc}(X, \p
X):=\mathrm{res}^{-1}\big(\Riem_{\mathcal{S}}^{\psc} (\p X)\big)\subset
\Riem^{\psc}(X, \p X)
\end{gather*}
is called the space of \emph{well-adapted psc-metrics on $X$}.
\end{Definition}
 There is a further subspace of $\Riem_{\mathcal{S}}^{\psc}(X, \p X)$
 which we must introduce. Suppose $h\in \Riem^{\psc}(\beta X)$ is a
 psc-metric on the Bockstein manifold $\beta X$. Recall that
 $\Riem_{\mathcal{S}}^{\psc}(\p X)_{h}\subset
 \Riem_{\mathcal{S}}^{\psc}(\p X)$, denotes the pre-image,
 $\mathcal{B}^{-1}(h)$. We~now denote by
 $\Riem_{\mathcal{S}}^{\psc}(X, \p X)_h$, the space defined by
\begin{gather*}
\Riem_{\mathcal{S}}^{\psc}(X, \p X)_h:=\mathrm{res}^{-1}\big(\Riem_{\mathcal{S}}^{\psc}(\p X)_{h}\big)=\mathrm{res}^{-1}\big(\mathcal{B}^{-1}(h)\big).
\end{gather*}

All of this extends naturally to a definition of well-adapted metric
on the pseudo-manifold $X_{\Sigma}=X\cup_{\p X} -N(\beta X)$. A well
adapted psc-metric $g\in \Riem_{\mathcal{S}}^{\psc}(X, \p X)$, has
associated to it a~unique Riemannian submersion $p\colon \big(\p X, g^{\p}\big)\to
\big(\beta X, h^{\beta}\big)$ with a fibre $(L, \tau g_{L})$ for some constant
$\tau>0$. This determines
\begin{enumerate}\itemsep=0pt
\item[$(i)$] a cone metric, $g_{C(L)}$, as defined in~\eqref{conemetricdef}, although with $g_L$ now
replaced by $\tau g_{L}$ in that formula,
\item[$(ii)$] and an attaching metric $g_{\mathrm{att}(L)}$.
\end{enumerate}
In turn, we obtain a submersion metric on $N(\beta X)$ obtained by
equipping the fibres with $g_{\mathrm{att}(L)}\cup g_{C(L)}$. This
metric then attaches to $g$ in the obvious way to yield a metric on
$X_{\Sigma}$. This process can be thought of as a map
\begin{gather}\label{metricattach}
i_{\Sigma}\colon\ \Riem_{\mathcal{S}}^{\psc}(X, \p X)\longrightarrow \Riem(X_{\Sigma}).
\end{gather}
This map is easily seen to be a homeomorphism onto its image. This
leads to the following definition.
\begin{Definition}
The image of the map $i_{\Sigma}$, denoted $\Riem^{\psc}(X_{\Sigma})$,
is called the space of {\em well adapted psc-metrics on $X_{\Sigma}$}.
\end{Definition}
\noindent This space, $\Riem^{\psc}(X_{\Sigma})$, is the space we are
most interested in studying.
\begin{Remark}
It is worth emphasising that the space $\Riem_{\mathcal{S}}^{\psc}(\p
X)$ (as well the space $\Riem_{\mathcal{S}}^{\psc}(X, \p X)$) could be
empty even though the boundary $\p X$ has a psc-metric. A simple
example is the following. Let $\beta X= K3$. Then for any prime
generator $c\in H^{2}(K3;\Z)=\Z^{22}$ a corresponding map $c\colon K3\to
\CP^{\infty}$ gives a bundle $Z \to \beta X$, where $Z$ is a
simply-connected spin manifold which, of course, admits a
psc-metric. Moreover, $Z$ is a spin boundary: $Z=\p X$. However, no
psc-metric on $Z$ could be invariant under $S^1$-action, otherwise,
according to B{\'e}rard-Bergery~\cite[Theorem~C]{B-B}, it would imply
the existence of a psc-metric on $K3$.
\end{Remark}

As we mentioned above, there are two types of surgery that could be
performed on $X_{\Sigma}$: the first one on its resolution, the
interior of $X$, and the second one on the structure map $f\colon \beta X
\to BG$. We~consider the latter. Moreover, for now it is convenient to
cut the singularity out and to
work with a smooth manifold $X$ whose boundary is fibred
over $\beta X$ with fibre $L$. We~use the notation $(X,\beta X, f)$
for such manifold, where the boundary $\p X$ of $X$ is a total space
of the fibre bundle from the diagram:
\begin{gather*}
 \begin{diagram}
 \setlength{\dgARROWLENGTH}{1.6em}
\node{\p X}
 \arrow{e,t}{\hat f}
 \arrow{s,r}{p}
\node{E(L)}
 \arrow{s,r}{p_L}
 \\
\node{\beta X}
 \arrow{e,t}{f}
\node{BG.}
\end{diagram}
\end{gather*}
Here $p_L\colon E(L)\to BG$ is the universal fibre bundle with the fibre $L$
and the structure group $G$.

We recall the earlier bordism of smooth compact manifolds, $\bar B\colon B_0 \bord B_1 $. We~assume now that this is an elementary bordism
with $B_0 =\beta X$ and, as before, $\bar f\colon \bar B \to BG$ a map
such that $\bar f|_{B_0}=f_0=f$. We~will use the notation $\big(\bar
B,\bar f\big)\colon (B_0,f_0) \bord (B_1,f_1)$. Let $\bar p\colon \bar E \to \bar
B$ be a~corresponding fibre bundle with fibre $L$ and structure group~$G$. By construction, $\bar E|_{B_0}=E_0=\p X$, and $\bar
p|_{E_0}=p$. Then the manifold $\bar E$ gives a bordism $\bar E\colon E_0\bord E_1$, where $E_1=E|_{B_1}$. As~before, we assume that the
bordism $\bar B$ is equipped with collars $c_i\colon B_i\times [0,2)\to
 \bar{B}$, $i=0,1$, along the boundary~$\p \bar B$.

We now equip $B_0=\beta X$ with the metric $h^{\beta}_{0}=h^{\beta}
(=g_{\beta X})$. We~assume that there is a~psc-metric $\bar
h^{\beta}\in \Riem^{\psc}\big(\bar B\big)_{h^{\beta}_0,h^{\beta}_1}$, where
$h^{\beta}_1$ is a psc-metric on the manifold $B_1$. In particular,
this means that $c_i^{*}\bar h^{\beta}=h^{\beta}_i+{\rm d}t^{2}$, with
respect to the collars $c_i\colon B_i \times [0,2) \rh \bar{B}$, $i=0,1$
 near the boundary $\p \bar{B} = B_0\sqcup B_1$. Thus, the metric
 $\bar h^{\beta}$ provides a psc-bordism
\begin{gather*}
\big(\bar B,\bar f,\bar h^{\beta}\big)\colon\ \big(B_0,f_0,h^{\beta}_0\big) \bord
 \big(B_1,f_1,h^{\beta}_1\big).
\end{gather*}
The structure map $\bar f\colon \bar B \to BG$ determines a
bordism $\bar E\colon E_0\bord E_1$, where $\bar E$ is a pull-back of the
universal $(L,G)$-fibration:
\begin{gather*}
 \begin{diagram}
 \setlength{\dgARROWLENGTH}{1.95em}
\node{\bar E}
 \arrow{e,t}{\hat{\bar f}}
 \arrow{s,l}{\bar{p}}
\node{E(L)}
 \arrow{s}
 \\
\node{\bar B}
 \arrow{e,t}{\bar f}
\node{BG}
\end{diagram}
 \end{gather*}
 with $\bar E|_{B_0}=E_0$ and $\bar E|_{B_1}=E_1$. Now we glue the
 manifolds $X$ and $\bar E$ (again, making use of~col\-lars near their
 boundaries) to obtain the manifold $X_1= X\cup_{\p X} \bar E$ with
 boundary $\p X_1 = E_1$, which is the total space of the
 $(L,G)$-fibration $p_1\colon \p X_1\to B_1$. We~denote $\beta X_1 = B_1$. We~denote by $\mathcal{S}_1$, the analogue of the map $\mathcal{S}$
 in~\eqref{Slift}, for the bundle, $p_1\colon \p X_1\to B_1$ and by $
 \Riem_{\mathcal{S}_1}^{\psc}(X_1, \p X_1)_{h^{\beta}_1}$, the
 analogous space of psc-metrics on $X_1$ with boundary metric a
 psc-submersion which restricts on the base as the psc-metric
 $h^{\beta}_1$. We~will shortly employ the psc-submersion
 construction from Lemma~\ref{pscliftlemma} to obtain a
 map
 \begin{gather*}
 \Riem_{\mathcal{S}}^{\psc}(X, \p
 X)_{h^{\beta}_0}\longrightarrow \Riem_{\mathcal{S}_1}^{\psc}(X_1, \p
 X_1)_{h^{\beta}_1},
 \end{gather*}
 an analogue of the map in~\eqref{gluemap-1}. In preparation for this, there are some objects
 we must re-introduce.

 In the preamble to Lemma~\ref{pscliftlemma}, we considered spaces of
 horizontal distributions, $\mathcal{HD}(p_0)$ and
 $\mathcal{HD}_{\mathrm{adm}}(\bar{p})$. In the latter case, we
 recall that distributions satisfy a product structure near the
 boundary components $E_{i}$. It will be useful for us to consider
 the space of maps from $\mathcal{HD}(p_0)$ to
 $\mathcal{HD}_{\mathrm{adm}}(\bar{p})$, which we denote
 $\mathcal{F}(\mathcal{HD}(p_0),
 \mathcal{HD}_{\mathrm{adm}}(\bar{p}))$, under the usual compact-open
 topology. In particular, we will be interested in maps
 $\xi\colon\mathcal{HD}(p_0)\to\mathcal{HD}_{\mathrm{adm}}(\bar{p})$ which
 satisfy the condition that $\xi(\mathcal{H})|_{E_0}=\mathcal{H}$.
 Such maps are easy to construct. For~example, fix a disribution
 $\bar{\mathcal{H}}$ on~$\bar{E}|_{\bar{B}\setminus c_{0}(B\times
 [0,1])}$ which takes a product structure,
 $\mathcal{H}_{1}\times\mathbb{R}$, on $\bar{E}|_{c_{0}(B\times
 [1,2))}$. Then define $\xi(\mathcal{H})$ as
 $\mathcal{H}\times\mathbb{R}$ on $\bar{E}|_{c_{0}(B\times [0,1]}$,
 $\bar{\mathcal{H}}$ on $\bar{E}|_{\bar{B}\setminus c_{0}(B\times
 [0,2))}$ while continuously transitioning bet\-ween $\mathcal{H}$
 and $\mathcal{H}_1$ along $E|_{{c_{0}(B\times [1,2))}}$. We~denote by $\mathcal{F}_0(\mathcal{HD}(p_0),
 \mathcal{HD}_{\mathrm{adm}}(\bar{p}))$, the subspace of~$\mathcal{F}(\mathcal{HD}(p_0),
 \mathcal{HD}_{\mathrm{adm}}(\bar{p}))$, defined by
 \begin{gather*}
 \mathcal{F}_0\big(\mathcal{HD}(p_0), \mathcal{HD}_{\mathrm{adm}}(\bar{p})\big):=\big\{\xi\in
 \mathcal{F}(\mathcal{HD}(p_0),
 \mathcal{HD}_{\mathrm{adm}}(\bar{p}))\colon \xi(\mathcal{H})|_{E_0}=\mathcal{H}\big\}.
 \end{gather*}
 The proof of the
 following proposition is an elementary exercise.
\begin{Proposition}
The spaces $\mathcal{F}(\mathcal{HD}(p_0),
\mathcal{HD}_{\mathrm{adm}}(\bar{p}))$ and
$\mathcal{F}_0(\mathcal{HD}(p_0),
\mathcal{HD}_{\mathrm{adm}}(\bar{p}))$ are convex.
\end{Proposition}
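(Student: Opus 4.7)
The plan is to reduce convexity of both mapping spaces to the convexity of the target $\mathcal{HD}_{\mathrm{adm}}(\bar{p})$, which is known by elementary considerations (as the paper already asserts), together with the fact that the boundary condition defining $\mathcal{F}_0$ is linear.

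First I would fix two maps $\xi_0,\xi_1 \in \mathcal{F}(\mathcal{HD}(p_0), \mathcal{HD}_{\mathrm{adm}}(\bar{p}))$ and, for each $t\in [0,1]$, define
\begin{gather*}
\xi_t(\mathcal{H}) := (1-t)\,\xi_0(\mathcal{H}) + t\,\xi_1(\mathcal{H}),
\end{gather*}
where the convex combination on the right is performed pointwise in the Grassmann bundle (equivalently, via the affine structure on the space of connections on the principal bundle underlying $\bar p$). Because $\mathcal{HD}_{\mathrm{adm}}(\bar p)$ is convex, each $\xi_t(\mathcal{H})$ lies in $\mathcal{HD}_{\mathrm{adm}}(\bar p)$. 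Continuity of $\xi_t$ in $\mathcal{H}$ is inherited from continuity of $\xi_0$ and $\xi_1$ and the continuity of the affine combination; moreover the assignment $t\mapsto \xi_t$ is continuous in the compact-open topology because $\xi_0$ and $\xi_1$ are continuous and the affine combination depends continuously on all arguments. This shows $\mathcal{F}(\mathcal{HD}(p_0), \mathcal{HD}_{\mathrm{adm}}(\bar{p}))$ is convex.

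For the subspace $\mathcal{F}_0$, I would then check that the constraint $\xi(\mathcal{H})|_{E_0}=\mathcal{H}$ is preserved under convex combinations. If $\xi_0,\xi_1\in\mathcal{F}_0$, then for every $\mathcal{H}\in\mathcal{HD}(p_0)$,
\begin{gather*}
\xi_t(\mathcal{H})|_{E_0} = (1-t)\,\xi_0(\mathcal{H})|_{E_0} + t\,\xi_1(\mathcal{H})|_{E_0} = (1-t)\mathcal{H} + t\mathcal{H} = \mathcal{H},
\end{gather*}
so $\xi_t\in\mathcal{F}_0$ as well. Combined with the first step this yields convexity of $\mathcal{F}_0(\mathcal{HD}(p_0),\mathcal{HD}_{\mathrm{adm}}(\bar{p}))$.

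There is essentially no obstacle here: the only non-formal input is the meaning of ``convex combination of horizontal distributions'', which is already implicit in the earlier propositions asserting convexity of $\mathcal{HD}(p)$ and $\mathcal{HD}_{\mathrm{adm}}(\bar p)$. If one wishes to be explicit, one identifies $\mathcal{HD}(\,\cdot\,)$ with an affine subspace of connections on the underlying principal $G$-bundle, which makes the pointwise affine combination well-defined and jointly continuous in $(t,\mathcal{H})$.
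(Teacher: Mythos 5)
Your argument is correct and is precisely the elementary exercise the paper leaves to the reader (no proof is given in the text): pointwise convex combination, taken in the affine structure on complements of the vertical bundle, lands in $\mathcal{HD}_{\mathrm{adm}}(\bar{p})$ by the already-established convexity of that space, and the affine constraint $\xi(\mathcal{H})|_{E_0}=\mathcal{H}$ is visibly preserved under such combinations. Nothing further is needed.
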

We next recall that a metric $g\in\Riem_{\mathcal{S}}^{\psc}(X, \p
X)_{h^{\beta}_0}$ restricts on the boundary $\p X$ as a submersion
metric, $g^{\p}=g|_{\p X}$ over the base metric $ h^{\beta}_0$, with
fibre metric $\tau_{0}(g) g_{L}$ (for some fibre metric scaling
constant $\tau_{0}(g)>0$) and horizontal distribution
$\mathcal{H}_{0}(g)$. Both the fibre metric scaling constant and the
horizontal distribution vary continuously with respect to $g$. We~now
choose a map $\xi\colon\mathcal{HD}(p)\longrightarrow
\mathcal{HD}_{\mathrm{adm}}(\bar{p})$ from the space
$\mathcal{F}_0(\mathcal{HD}(p_0),
\mathcal{HD}_{\mathrm{adm}}(\bar{p}))$. Thus, for any
$g\in\Riem_{\mathcal{S}}^{\psc}(X, \p X)_{h^{\beta}_0}$, we obtain a
distribution $\xi(\mathcal{H}_{0}(g))$ on the total space $\bar{E}$,
extending $\mathcal{H}_{0}(g)$ from~$E_0$. This allows us to specify
a map
\begin{align}\label{gluemap-1a}
 \mu_{(\bar B,\bar f, \bar h^{\beta})}\colon\ \Riem_{\mathcal{S}}^{\psc}(X, \p X)_{h^{\beta}_0}
 &\longrightarrow
 \Riem_{\mathcal{S}_1}^{\psc}(X_1, \p X_1)_{h^{\beta}_1},\nonumber
 \\
 g &\longmapsto g\cup \bar g^{\p},
\end{align}
where $\bar g^{\p}$ is the metric obtained in Lemma~\ref{pscliftlemma}
above with respect to the triple of base metric $\bar h^{\beta}\in
\Riem^{\psc}\big(\bar B\big)_{h^{\beta}_0,h^{\beta}_1}$, admissible horizontal
distribution $\xi(\mathcal{H}_{0}(g))$ and fibre scaling constant
$\bar{\tau}(\bar h^{\beta}, \xi(\mathcal{H}_{0}(g)))$ (as defined in~\eqref{taumin}). The following is an easy consequence of Lemma
\ref{pscliftlemma}.
\begin{Corollary}
The homotopy type of the map $\mu_{(\bar B,\bar f, \bar h^{\beta})}$
is invariant of the psc-isotopy type of~the base metric $\bar
h^{\beta}$, as well as the choice of distribution map $\xi\in
\mathcal{F}_0(\mathcal{HD}(p_0),
\mathcal{HD}_{\mathrm{adm}}(\bar{p}))$.
\end{Corollary}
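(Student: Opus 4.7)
The plan is to construct an explicit homotopy between the two instances of $\mu$ by simultaneously deforming the base psc-bordism metric and the distribution map. Suppose $\bar h^\beta_0,\bar h^\beta_1 \in \Riem^{\psc}\big(\bar B\big)_{h^\beta_0,h^\beta_1}$ are psc-isotopic via a path $\{\bar h^\beta_s\}_{s\in[0,1]}$, and let $\xi^0,\xi^1 \in \mathcal{F}_0(\mathcal{HD}(p_0),\mathcal{HD}_{\mathrm{adm}}(\bar{p}))$. By the convexity of $\mathcal{F}_0$ established in the preceding proposition, the linear combination $\xi^s := (1-s)\xi^0 + s\xi^1$ is a continuous path in $\mathcal{F}_0$ joining them.

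Next I would define a candidate homotopy of maps $\mu^s \colon \Riem_{\mathcal S}^{\psc}(X,\p X)_{h^\beta_0} \to \Riem_{\mathcal S_1}^{\psc}(X_1,\p X_1)_{h^\beta_1}$ by substituting $(\bar h^\beta_s,\xi^s)$ for $(\bar h^\beta,\xi)$ in the definition \eqref{gluemap-1a}; that is, $\mu^s(g) := g\cup \bar g^\p(s,g)$, where $\bar g^\p(s,g)$ is produced by Lemma~\ref{pscliftlemma} from the base metric $\bar h^\beta_s$, admissible horizontal distribution $\xi^s(\mathcal{H}_0(g))$, and fibre-scaling constant $\bar\tau\big(\bar h^\beta_s,\xi^s(\mathcal{H}_0(g))\big)$. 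By construction, $\mu^0$ and $\mu^1$ coincide with the two maps $\mu_{(\bar B,\bar f,\bar h^\beta_0)}$ and $\mu_{(\bar B,\bar f,\bar h^\beta_1)}$ defined with respect to $\xi^0$ and $\xi^1$, respectively.

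To finish, I would verify joint continuity of $(s,g)\mapsto \mu^s(g)$ and positivity of its scalar curvature. For each fixed $g$, I apply Lemma~\ref{pscliftlemma} with the compact parameter spaces $K_1 = K_2 = [0,1]$, parametrising the isotopy $s\mapsto \bar h^\beta_s$ and the linear homotopy $s\mapsto \xi^s(\mathcal{H}_0(g))$ respectively. This produces a continuous two-parameter family of psc-submersion metrics on $\bar E$ extending $g|_{\p X}$; restricting to the diagonal $s_1=s_2=s$ yields $\bar g^\p(s,g)$. Continuity in $g$ follows from the continuous dependence of $\mathcal{H}_0(g)$ and of the fibre-scaling constant $\tau_0(g)$ on $g$, together with the continuity of the constructions used in the proof of Lemma~\ref{pscliftlemma} (the cone and attaching metrics, the rescaling profile $\gamma_{\bar\tau_{\min}}$, and the submersion formula of O'Neill).

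The main obstacle I expect is maintaining positive scalar curvature uniformly along the homotopy while the fibre-scaling constant $\bar\tau$ itself varies with $s$: in principle, interpolating between the value $\bar\tau\big(\bar h^\beta_0,\xi^0(\mathcal{H}_0(g))\big)$ at one end and $\bar\tau\big(\bar h^\beta_1,\xi^1(\mathcal{H}_0(g))\big)$ at the other could cause the O'Neill term $\tau|A|^2$ to dominate somewhere in between. This is exactly the reason Lemma~\ref{pscliftlemma} was formulated for compact parameter families: it delivers a single uniform lower bound $\bar\tau_{\min}>0$ and a continuous rescaling profile for which the submersion metric stays in $\Riem^{\psc}$ throughout the deformation. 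Once this uniform control is in hand, the family $\{\mu^s\}_{s\in [0,1]}$ is the required homotopy, proving that the homotopy class of $\mu_{(\bar B,\bar f,\bar h^\beta)}$ depends only on the psc-isotopy class of $\bar h^\beta$ and is independent of the choice of $\xi$.
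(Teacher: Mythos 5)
Your argument is correct and is exactly the route the paper intends: the paper gives no details beyond declaring the corollary an easy consequence of Lemma~\ref{pscliftlemma}, and you supply them in the natural way (convexity of $\mathcal{F}_0$ to join the two distribution maps, the psc-isotopy to join the base metrics, and the compact-parameter form of the lifting lemma with $K_1=K_2=[0,1]$ to keep the lifted family continuous and of positive scalar curvature). The only point worth making explicit is that the isotopy $\bar h^\beta_s$ must stay in the fixed-boundary space $\Riem^{\psc}\big(\bar B\big)_{h^\beta_0,h^\beta_1}$ so that every $\mu^s$ has the same codomain, which is the intended meaning of ``psc-isotopy type'' here.
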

As our work concerns only homotopy type, we feel justified
in suppressing the role of $\xi$, in~the notation of the map~\eqref{gluemap-1a} above.

Now we are ready to state our main technical result which is similar
to Theorem~\ref{thm-boundary}.
\begin{Theorem}\label{thm-fb-sing}
 Let $(X,\beta X, f)$ be a manifold with fibred singularities, i.e., the boundary $\p X$ is a total space of an $(L,G)$-fibration $\p
 X\to \beta X$ given by the structure map $f\colon \beta X \to BG$, where
 $\dim X = n$, $\dim L= \ell$. Furthermore, we assume $\big(\bar B,\bar
 f\big)\colon (B_0,f_0)\bord (B_1,f_1)$ is an elementary bordism with $p,q\geq
 2$, where $B_0=\beta X$. Then for any psc-metric $h^{\beta}_0$ on
 $B_0$ there exist psc-metrics~$h^{\beta}_1$ on~$B_1$ and $\bar
 h^{\beta}\in \Riem^{\psc}\big(\bar B\big)_{h^{\beta}_0,h^{\beta}_1}$ such
 that the map
\begin{align*}
 \mu_{(\bar B,\bar f, \bar h^{\beta})}\colon\ & \Riem_{\mathcal{S}}^{\psc}(X,\p X)_{h^{\beta}_0} \longrightarrow \Riem_{\mathcal{S}_1}^{\psc}(X_1,\p X_1)_{h^{\beta}_1}, \qquad
 g \longmapsto g\cup \bar g^{\p},\\
 & X_1=X\cup_{\p X=E_0} \bar E, \qquad \beta X_1=
 E_1,
\end{align*}
defined as in \eqref{gluemap-1a} above, is a weak homotopy equivalence.
\end{Theorem}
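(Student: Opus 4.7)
The plan is to reduce to Walsh's Theorem~\ref{thm-boundary} by means of a commutative square of Serre fibrations. I would place $\mu_{(\bar B,\bar f,\bar h^{\beta})}$ as the top edge of such a square, with vertical arrows the restriction maps $\mathrm{res}\colon \Riem^{\psc}_{\mathcal{S}}(X,\partial X)_{h^{\beta}_0}\to \Riem^{\psc}_{\mathcal{S}}(\partial X)_{h^{\beta}_0}$ and its $X_1$-analogue. These are Serre fibrations by the Chernysh/Ebert--Frenck theorem quoted above. The bottom edge $\mu^{\partial}$ sends a boundary submersion $g^{\partial}_0$ to the submersion $\bar g^{\partial}|_{E_1}$ produced by Lemma~\ref{pscliftlemma}.

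Next I would argue that $\mu^{\partial}$ is automatically a weak equivalence because both its source and target are contractible. The explicit homotopy $H_t$ used in the proof of Lemma~\ref{lifthe} fixes the base metric throughout, so its restriction to $\mathcal{B}^{-1}(h^{\beta}_0)$ deformation-retracts $\Riem^{\psc}_{\mathcal{S}}(\partial X)_{h^{\beta}_0}$ to a point, and similarly on the target side.

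Then I would analyse the induced map on fibres over corresponding points $g^{\partial}_0$ and $g^{\partial}_1=\mu^{\partial}(g^{\partial}_0)$. Once $g^{\partial}_0$ is fixed, so are its horizontal distribution and fibre-scaling constant; hence the admissible distribution on $\bar E$ chosen via the section $\xi\in\mathcal{F}_0$ and the resulting psc-metric $\bar g^{\partial}$ on $\bar E$ are fixed as well. The fibre map therefore reduces to the ordinary gluing $g\mapsto g\cup \bar g^{\partial}$ of the form~\eqref{gluemap-1}, between $\Riem^{\psc}(X,\partial X)_{g^{\partial}_0}$ and $\Riem^{\psc}(X_1,\partial X_1)_{g^{\partial}_1}$. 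I would then invoke (a mildly generalised form of) Theorem~\ref{thm-boundary} to see that this gluing map is a weak equivalence: because the extension $\bar f$ exists, the $G$-bundle over the cocore $D^{p+1}$ of the surgery is trivial, and so $\bar E\colon E_0\bord E_1$ is the trace of a single surgery on $E_0$ along the embedded submanifold $L\times S^p$ with trivial normal bundle of codimension $q+1\geq 3$. A standard $5$-lemma argument on the long exact sequences of the two Serre fibrations then forces $\mu_{(\bar B,\bar f,\bar h^{\beta})}$ itself to be a weak homotopy equivalence.

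The main obstacle is executing this last application of Theorem~\ref{thm-boundary} rigorously, since Walsh's proof in~\cite{W2} is written for ordinary sphere surgery rather than surgery along a product $L\times S^p$. What makes the extension go through is that all of Walsh's parametrised Gromov--Lawson constructions act purely in the $(q+1)$-dimensional normal direction to the surgery locus and leave the fibre $L$ untouched; each parametric family can therefore be carried out fibrewise over the base bordism $\bar B$ and reassembled into a Riemannian submersion on $\bar E$ via Lemma~\ref{pscliftlemma}. The bulk of the technical work lies in verifying this submersion-compatibility step by step through Walsh's proof, and in checking that the torpedo/bending constructions underlying his Gromov--Lawson traces interact correctly with the scaling constant $\bar\tau_{\min}$ produced in Lemma~\ref{pscliftlemma}.
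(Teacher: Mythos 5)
Your proposal is correct in outline but organizes the argument differently from the paper. The paper does not pass through a fibration square at all: it decomposes $\mu_{(\bar B,\bar f,\bar h^{\beta})}$ directly into three maps mirroring the architecture of Walsh's proof of \cite[Theorem~A]{W2} --- a map $\mu_{\boot}$ gluing on a lifted concordance that standardizes the base metric near the surgery sphere (ending in a boot metric), a homeomorphism $\mu_{\Estd}$ onto a space of ``extra-standard'' metrics on $X_1$, and the inclusion of that space into $\Riem^{\psc}_{\mathcal{S}_1}(X_1,\p X_1)_{h_1^{\beta}}$ --- and shows each is a weak equivalence by lifting Walsh's compact families of base deformations through Lemma~\ref{pscliftlemma}. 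Your route instead compares the two restriction fibrations, contracts the bases $\mathcal{B}^{-1}\big(h_0^{\beta}\big)$ and $\mathcal{B}_1^{-1}\big(h_1^{\beta}\big)$, and pushes everything into the fibres, where the map becomes a single gluing of the form~\eqref{gluemap-1} along the bordism $\bar E\colon E_0\bord E_1$; this is closer in spirit to how the paper later deduces Theorem~\ref{thmA} from Theorem~\ref{thm-fb-sing}. The trade is fair: you get a cleaner separation of boundary data from interior data, but the residual fibrewise statement (Walsh's theorem for surgery along $L\times S^p$) is exactly as hard as the paper's third step --- via the two fibrations and the contractibility of their bases the two statements are equivalent --- so no technical work is actually saved.

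Two places need tightening. First, the homotopy $H_t$ from Lemma~\ref{lifthe} ends at $\mathcal{S}\big(\bar g_B,\mathcal{H}_{\bar g},\bar\tau(\bar g_B,\mathcal{H}_{\bar g})\big)$ rather than at $\bar g$, so to contract $\mathcal{B}^{-1}\big(h_0^{\beta}\big)$ you must append a linear interpolation of the scaling constant from $\bar\tau(\bar g_B,\mathcal{H}_{\bar g})$ to $\tau_{\bar g}$; this stays psc because, for a fixed base metric and distribution, the admissible scaling constants form an interval. Second, your justification of the generalized Theorem~\ref{thm-boundary} --- that every parametric family can be ``carried out fibrewise over $\bar B$ and reassembled into a Riemannian submersion via Lemma~\ref{pscliftlemma}'' --- applies literally only where the metrics being deformed are submersions. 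The elements of the fibre $\Riem^{\psc}(X_1,\p X_1)_{g_1^{\p}}$ are arbitrary psc-metrics away from the boundary collar, so the hard ``extra-standard'' deformation of \cite[Section~6.6]{W2} cannot be lifted from the base wholesale; what makes it go through (and what the paper is implicitly using) is that those deformations only alter the metric in regions where the product/submersion structure is already in force. Make that explicit: as stated, the fibrewise-reassembly claim overreaches.
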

\begin{Remark}
The construction of the metrics $h^{\beta}_1$ on $B_1$ and $\bar
h^{\beta}$ on $\bar{B}$ used in this theorem, follows from surgery
techniques pioneered by Gromov and Lawson~\cite{GL}, and later
generalised by others. It should be pointed out however that, given
such a metric, $h^{\beta}_1$, the theorem works equally well if
$h^{\beta}_1$ is replaced by any psc-metric which is psc-isotopic to
$h^{\beta}_1$, i.e., lies in the same path component of the space
$\Riem^{\psc}(B_1)$.
 \end{Remark}
\section{Proof of Theorem~\ref{thm-fb-sing}}
\subsection{Some standard metric constructions}
Here we briefly a recall a couple of standard metric
constructions. These constructions are discussed in detail in
\cite[Section 5]{W3}.

We fix some constants $\delta>0$ and $\lambda\geq 0$. Then a
\emph{$(\delta$-$\lambda)$-torpedo metric} on the disk $D^{n}$, deno\-ted~$g_{\tor}^{n}(\delta)_{\lambda}$, is a psc-metric which roughly takes
the form a round hemisphere of radius $\delta>0$ near the centre
before transitioning into a cylinder of radius $\delta$ and length
$\lambda\geq 0$ near the boundary; see first image in
Figure~\ref{Torpcorners-a} below.

Letting $D_{+}^{n}$ denote the upper hemi-disk, we obtain the metric
$g_{\tor+}^{n}(\delta)_{\lambda}:=g_{\tor}^{n}(\delta)_{\lambda}|_{D_{+}^{n}}$;
see second image in Figure~\ref{Torpcorners-a}. We~call
$g_{\tor+}^{n}(\delta)_{\lambda}$ a \emph{half-torpedo metric}. Let
$\lambda_2>0$ be some constant. Next, we consider the cylinder
$D^{n-1}\times [0, \lambda_2]$ equipped with the metric
$g_{\tor}^{n-1}(\delta)_{\lambda_1}+{\rm d}t^{2}$ and attach a half-disk
$D_{+}^{n}$ with half-torpedo metric
$g_{\tor+}^{n}(\delta)_{\lambda_1}$ along $D^{n-1}\times \{0\}$. We~denote the resulting Riemannian manifold by
$\big(D_{\mathrm{stretch}}^{n},\hat{g}_{\tor}^{n}(\delta)_{\lambda_1,
 \lambda_2}\big)$. This is depicted in the third image in~Figure~\ref{Torpcorners-a}.

Typically, we will not care so much about the $\lambda_2$-parameter
but only $\lambda_1$ which we regard as the vertical height of this
metric. Moreover, we will usually be interested in the case when
$\lambda_1=1$ and when $\delta=1$. With this in mind we make use of
the following notational simplifications.
\begin{gather*}
g_{\tor}^{n}:=g_{\tor}^{n}(1)_{1}.
\\
g_{\tor+}^{n}:=g_{\tor+}^{n}(1)_{1}.
\\
\hat{g}_{\tor}^{n}(\delta)_{\lambda_1}:=
 \hat{g}_{\tor}^{n}(\delta)_{\lambda_1,
 \lambda_2},\qquad \text{where}\quad \lambda_2\quad \text{is arbitrary}.
\\
 \hat{g}_{\tor}^{n}:=\hat{g}_{\tor}^{n}(1)_{1}=\hat{g}_{\tor}^{n}(1)_{1,1}.
\end{gather*}
The following proposition follows immediately from~\cite[Proposition~3.1.6]{W3}.

\begin{Proposition}
Let $n\geq3$, $\delta>0$ and $\lambda, \lambda_1, \lambda_2\geq 0$.
\begin{enumerate}\itemsep=0pt
\item[$(i)$] The metrics $g_{\tor}^{n}(\delta)_{\lambda}$,
 $g_{\tor+}^{n}(\delta)_{\lambda}$ and
 $\hat{g}_{\tor}^{n}(\delta)_{\lambda_1, \lambda_2}$ have positive
 scalar curvature.
\item[$(ii)$] For any constant $b\geq 0$ and any $\lambda,
 \lambda_1, \lambda_2\geq 0$, there exists $\delta>0$ so that the
 scalar curvature of the metrics $g_{\tor}^{n}(\delta)_{\lambda}$,
 $g_{\tor+}^{n}(\delta)_{\lambda}$ and
 $\hat{g}_{\tor}^{{n}}(\delta)_{\lambda_1, \lambda_2}$ is bounded
 below by $b$.
\end{enumerate}
\end{Proposition}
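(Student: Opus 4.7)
The plan is to express each torpedo-type metric as a rotationally symmetric warped product (possibly times a Euclidean factor), apply the standard formula for the scalar curvature of such a warped product, and verify positivity on each piece; this is exactly the route taken in the cited \cite[Proposition~3.1.6]{W3}. Concretely I would build $g_{\tor}^{n}(\delta)_{\lambda}$ as ${\rm d}r^{2} + f(r)^{2} g_{S^{n-1}}$ on the disc, with $f \colon [0, \pi\delta/2 + \lambda] \to [0,\delta]$ satisfying $f(r) = \delta \sin(r/\delta)$ near $r = 0$ (a spherical cap of radius~$\delta$), $f(r) \equiv \delta$ on $[\pi\delta/2,\pi\delta/2 + \lambda]$ (a cylindrical neck of length~$\lambda$), and monotonically interpolated in between with $0 \leq f' \leq 1$ and $f'' \leq 0$. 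The warped-product scalar curvature then reads
\[ s \;=\; -\,\frac{2(n-1)\,f''}{f} \;+\; \frac{(n-1)(n-2)\bigl(1-(f')^{2}\bigr)}{f^{2}}, \]
which evaluates to $n(n-1)/\delta^{2}$ on the spherical cap, to $(n-1)(n-2)/\delta^{2}$ on the cylinder (strictly positive since $n \geq 3$), and is a sum of two nonnegative terms, with the second strictly positive, on the interpolation region, so $s>0$ throughout.

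For the half-torpedo $g_{\tor+}^{n}(\delta)_{\lambda}$ I would simply restrict $g_{\tor}^{n}(\delta)_{\lambda}$ to the upper hemi-disc; since the cut is along a totally geodesic equator of the $S^{n-1}$-factor, the pointwise scalar curvature is unchanged and remains positive. For the stretched variant $\hat g_{\tor}^{n}(\delta)_{\lambda_{1},\lambda_{2}}$, the cylinder $D^{n-1} \times [0,\lambda_{2}]$ with metric $g_{\tor}^{n-1}(\delta)_{\lambda_{1}} + {\rm d}t^{2}$ has the same scalar curvature as $g_{\tor}^{n-1}(\delta)_{\lambda_{1}}$, hence positive by the previous step, while the half-torpedo attached to its top also has positive scalar curvature; the corner-smoothing at the attachment (as described in~\cite[Section~5]{W3}) is engineered precisely to preserve $s > 0$.

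For part $(ii)$, I would inspect the formulas above and observe that in each case the scalar curvature is bounded below by an expression of the shape $C_{n}/\delta^{2}$ for a dimension-dependent constant $C_{n} > 0$. Hence, given $b \geq 0$, choosing $\delta$ with $\delta^{2} \leq C_{n}/\max(b,1)$ and rerunning the construction guarantees $s \geq b$ pointwise and uniformly in the other parameters; the key point is that all positive contributions in the warped-product formula scale as $\delta^{-2}$, so shrinking $\delta$ amplifies them together.

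The main obstacle I expect is the careful choice of the interpolating warping function $f$: it must be simultaneously smooth, monotone, concave, with $|f'| \leq 1$, and produce a metric that is genuinely smooth both at the cone point $r=0$ (where rotational symmetry imposes odd-function conditions on~$f$) and at the transition to the cylinder, while keeping both summands in the scalar curvature formula under control in the transition region. This is a standard bending-of-ends argument going back to Gromov--Lawson~\cite{GL}, and the technical verification that such an $f$ can be chosen uniformly across the stated parameter ranges is precisely the content of~\cite[Proposition~3.1.6]{W3}, from which both (i) and (ii) then follow directly.
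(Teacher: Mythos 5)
Your proposal is correct and follows essentially the same route as the paper, which offers no argument beyond the citation of \cite[Proposition~3.1.6]{W3}: you have simply spelled out the rotationally symmetric warped-product computation and the $\delta^{-2}$ scaling of all curvature contributions that underlie that citation. The only caveat is the edge case $n=3$ for $\hat{g}_{\tor}^{n}(\delta)_{\lambda_1,\lambda_2}$, where your appeal to ``the previous step'' for the factor $g_{\tor}^{n-1}(\delta)_{\lambda_1}$ really requires $n-1\geq 3$ (the neck of the $2$-dimensional torpedo is a flat cylinder, so the product metric is only scalar-flat there); this is an artifact of the proposition's literal statement rather than a defect of your argument, and it is immaterial for the dimensions in which the proposition is actually applied.
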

\begin{figure}[!htbp]
\vspace{3cm}
\hspace{10mm}
\begin{picture}(0,0)
\includegraphics{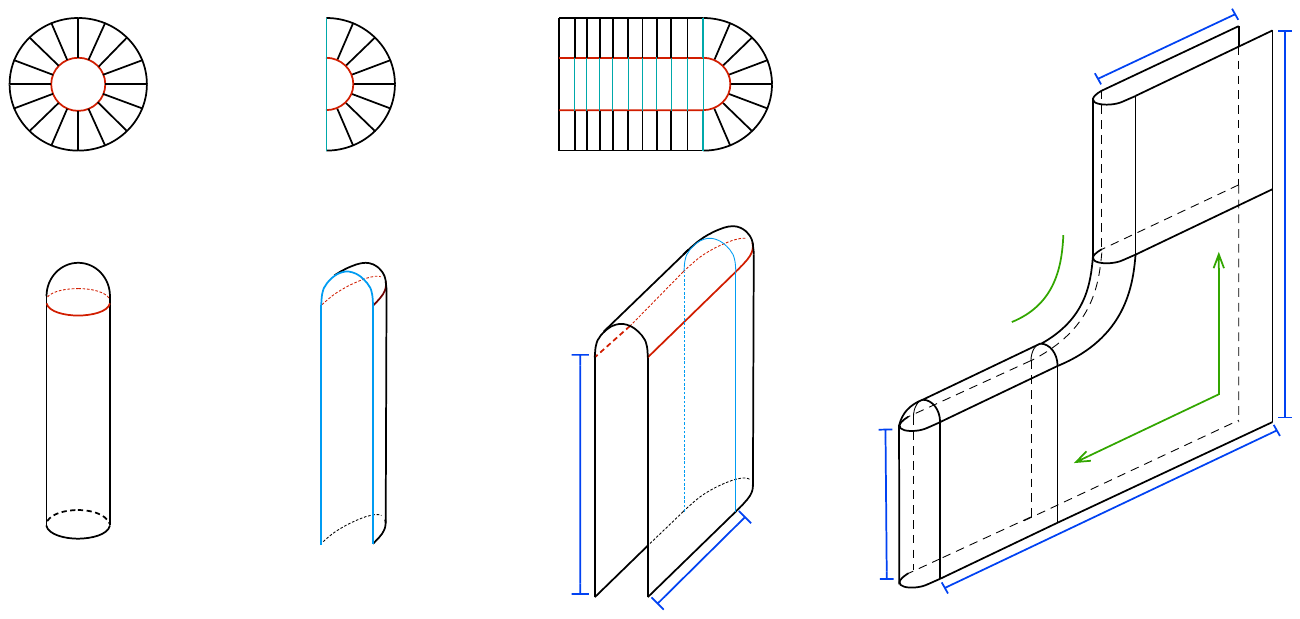}%
\end{picture}
\setlength{\unitlength}{3947sp}
\begingroup\makeatletter\ifx\SetFigFont\undefined%
\gdef\SetFigFont#1#2#3#4#5{%
 \reset@font\fontsize{#1}{#2pt}%
 \fontfamily{#3}\fontseries{#4}\fontshape{#5}%
 \selectfont}%
\fi\endgroup%
\begin{picture}(5079,1559)(1902,-7227)
\put(4400,-6500){\makebox(0,0)[lb]{\smash{{\SetFigFont{10}{8}{\rmdefault}{\mddefault}{\updefault}{\color[rgb]{0,0,1}$\lambda_1$}%
}}}}
\put(5220,-7000){\makebox(0,0)[lb]{\smash{{\SetFigFont{10}{8}{\rmdefault}{\mddefault}{\updefault}{\color[rgb]{0,0,1}$\lambda_2$}%
}}}}
\put(7500,-5700){\makebox(0,0)[lb]{\smash{{\SetFigFont{10}{8}{\rmdefault}{\mddefault}{\updefault}{\color[rgb]{0,0.6,0}$t_1$}%
}}}}
\put(7100,-6300){\makebox(0,0)[lb]{\smash{{\SetFigFont{10}{8}{\rmdefault}{\mddefault}{\updefault}{\color[rgb]{0,0.6,0}$t_2$}%
}}}}
\put(6600,-5600){\makebox(0,0)[lb]{\smash{{\SetFigFont{10}{8}{\rmdefault}{\mddefault}{\updefault}{\color[rgb]{0,0.6,0}$\frac{\pi}{2}\Lambda$}
}}}}
\put(5950,-6660){\makebox(0,0)[lb]{\smash{{\SetFigFont{10}{8}{\rmdefault}{\mddefault}{\updefault}{\color[rgb]{0,0,1}$l_1$}%
}}}}
\put(7300,-6750){\makebox(0,0)[lb]{\smash{{\SetFigFont{10}{8}{\rmdefault}{\mddefault}{\updefault}{\color[rgb]{0,0,1}$l_2$}%
}}}}
\put(8100,-5350){\makebox(0,0)[lb]{\smash{{\SetFigFont{10}{8}{\rmdefault}{\mddefault}{\updefault}{\color[rgb]{0,0,1}$l_3$}%
}}}}
\put(7370,-4350){\makebox(0,0)[lb]{\smash{{\SetFigFont{10}{8}{\rmdefault}{\mddefault}{\updefault}{\color[rgb]{0,0,1}$l_4$}%
}}}}
\end{picture}%
\caption{The metrics $g_{\tor}^{n}(\delta)_{\lambda}$,
 $g_{\tor+}^{n}(\delta)_{\lambda}$ and
 $\hat{g}_{\tor}^{n}(\delta)_{\lambda_1, \lambda_2}$ (bottom) on the
 manifolds $D^{n}$, $D_{+}^{n}$ and~$D_{\mathrm{stretch}}^{n}$ (top)
 followed by the boot metric $g_{\boot}^{n}(\delta)_{\Lambda,
 \bar{l}}$.}
\label{Torpcorners-a}
\end{figure}
We now consider product metrics
$g_{\tor}^{n-1}(\delta)_{\lambda}+{\rm d}t^{2}$ on the cylinder
$D^{n-1}\times I$. It is shown in~\cite[Section 5]{W3}, provided
$n\geq 4$, that any such product metric
$g_{\tor}^{n-1}(\delta)_{\lambda}+{\rm d}t^{2}$ can be moved by isotopy
through psc-metrics to a particular psc-metric called a {\em
 $\delta$-boot metric}. A detailed account of how such metrics are
constructed can be found in~\cite{W3}. Here we will provide brief
description.
\begin{enumerate}\itemsep=0pt
\item[$(i)$] Beginning with some torpedo metric,
 $g_{\tor}^{n-1}(\delta)_{\lambda}$, trace out a cylinder of torpedo
 metrics before bending the cylinder around an angle of
 $\frac{\pi}{2}$ to finish as a Riemannian cylinder (perpendicular to
 the first one) in the direction suggested by the rightmost image of
 Figure~\ref{Torpcorners-a}. The resulting object has two cylindrical
 ends, one of the form ${\rm d}t_{1}^{2}+g_{\tor}^{n-1}(\delta)_{\lambda}$
 and the other ${\rm d}t_{2}^{2}+g_{\tor}^{n-1}(\delta)_{\lambda}$, where
 $t_1$ and $t_2$ are orthogonal coordinates depicted in
 Figure~\ref{Torpcorners-a}.
\item[$(ii)$] In order to ensure the resulting metric has positive
 scalar curvature, the bending is controlled by a paramater
 $\Lambda>0$. Essentially, the bending takes place along a
 quarter-circle of~radius $\Lambda>0$. A sufficiently large choice of
 $\Lambda$ ensures that curvature arising from the bend is small and
 positivity of the scalar curvature arising from the torpedo factor
 dominates.
\item[$(iii)$] Away from the ``caps" of the torpedos, this metric takes
 the form ${\rm d}t_{1}^{2}+{\rm d}t_{2}^{2}+\delta^{2}{\rm d}s_{n-1}^{2}$. This part
 can easily be extended to incorporate the corner depicted in the
 rightmost image of Figure~\ref{Torpcorners-a} and so that the necks of
 the torpedo ``ends" have any desired lengths, $l_1$ and $l_4$
 (determining along with $\Lambda$ the distances $l_2$ and $l_3$).
\item[$(iv)$] Finally, we smoothly ``cap-off" the cylindrical end which
 takes the form ${\rm d}t_{1}^{2}+g_{\tor}^{n-1}(\delta)_{l_1}$, by
 attaching a half-torpedo metric, $g_{\tor+}^{n}(\delta)_{l_1}$. This
 is the so-called ``toe" of the boot metric.
\end{enumerate}
\noindent The resulting metric is denoted
$g_{\boot}^{n}(\delta)_{\Lambda, \bar{l}}$, where $\Lambda>0$ is the
bending constant discussed above and $\bar{l}=(l_1, l_2, l_3,
l_4)\in(0,\infty)^{4}$ determines the various neck-lengths. As~we
mentioned above, $\Lambda$~may need to be chosen to be large (although
can always be found) and will depend on $\delta$. While the choices of
$l_1$ and $l_4$ are arbitrary, the constants $l_2$ and $l_3$ are
determined by $\Lambda$, $l_1$ and~$l_4$.

\subsection{Back to the proof of Theorem~\ref{thm-fb-sing}}
The proof follows from that of~\cite[Theorem~A]{W2}. We~will provide
a brief review of the main steps of that proof and show that it goes
through perfectly well in our case. The strategy is to decompose the
map
\begin{gather*}
 \mu_{(\bar B,\bar f, \bar
 h^{\beta})}\colon\ \Riem_{\mathcal{S}}^{\psc}(X, \p X)_{h^{\beta}_0} \longrightarrow
 \Riem_{\mathcal{S}_1}^{\psc}(X_1, \p X_1)_{h^{\beta}_1},
\end{gather*}
into a composition of three maps as shown in the commutative diagram
below:
\begin{gather}\label{bdydiag-1}
\xymatrix{
 & \Riem_{\mathcal{S}}^{\psc}(X, \p X)_{h^{\beta}_0} \ar@{->}[d]^{\mu_{\boot}}
 \ar@{->}[r]^{\mu_{(\bar B,\bar f, \bar
 h^{\beta})}}&
 \Riem_{\mathcal{S}_1}^{\psc}(X_1, \p X_1)_{h^{\beta}_1}\\
 & \Riem_{\mathcal{S}-\boot}^{\psc}(X,\p X)_{h^{\beta}_{\std}} \ar@{->}[r]^{\mu_{\Estd}} &
 \Riem_{\mathcal{S}_1-\Estd}^{\psc}(X_1,\p X_1)_{h^{\beta}_1}. \ar@{^{(}->}[u]^{}
}
\end{gather}
Here the right vertical map denotes inclusion. We~will define the
spaces $\Riem_{\mathcal{S}-\boot}^{\psc}(X,\p X)_{h^{\beta}_{\std}}$
and $\Riem_{\mathcal{S}_1-\Estd}^{\psc}(X_1, \p X_1)_{h^{\beta}_1}$
and the maps $\mu_{\boot}$ and $\mu_{\Estd}$ in due course. The point
is to show that each of these maps is a weak homotopy equivalence.

We denote by $k=n-\ell-1=\dim \beta X= B_0$. We~consider carefully the
elementary bordism $\big(\bar B,\bar f\big)\colon (B_0,f_0)\bord (B_1,f_1)$. The
manifold $\bar B$ is given by attaching a handle $D^{p+1}\times
D^{q+1}$ to $B_0$ along the embeddings $\phi\colon S^{p}\times D^{q+1}\rh
B_0$, where $p+q+1=k$, $q\geq 2$. We~would like to have some
flexibility for the embedding $\phi$. We~introduce the following
family of rescaling maps:
\begin{gather*}
\begin{split}
\sigma_\rho\colon\ S^{p}\times D^{q+1}&\longrightarrow S^{p}\times D^{q+1},\\
(x,y)&\longmapsto (x, \rho y),
\end{split}
\end{gather*}
where $\rho\in(0,1]$. We~set
\begin{gather*}
\phi_{\rho}:=\phi\circ\sigma_{\rho}\colon\ S^{p}\times D^{q+1}\rh B_0
\end{gather*}
and $N_{\rho}:=\phi_{\rho}(S^{p}\times D^{q+1})$, abbreviating
$N:=N_{1}$ and $\phi:=\phi_1$. Let $T_{\phi}$ be the trace of the
surgery on $B_0$ with respect to $\phi$. We~denote by
$\Riem_{\std}^{\psc}(B_0)$, the space defined as follows:
\begin{gather*}
\Riem_{\std}^{\psc}(B_0):=\big\{g\in \Riem^{\psc}(B_0)\colon
\phi_{\frac{1}{2}}^{*}g={\rm d}s_{p}^{2}+g_{\tor}^{q+1} \text{ on }
S^{p}\times D^{q+1}\big\}.
\end{gather*}
According to Chernysh's theorem~\cite{Ch,EF}, the inclusion
\begin{gather*}
 \Riem_{\std}^{\psc}(B_0)\subset \Riem^{\psc}(B_0),
\end{gather*}
is a weak homotopy equivalence. A major step in the proof of this
theorem is the fact (which follows easily enough from the original
Gromov--Lawson construction in~\cite{GL}) that for any psc-metric
$h^{\beta}\in \Riem^{\psc}(B_0)$, there is an isotopy $h_t^{\beta}$,
$t\in I$ of metrics in $\Riem^{\psc}(B_0)$ connecting
$h_0^{\beta}=h^{\beta}$ to a psc-metric $h_{\std}^{\beta}\in
\Riem_{\std}^{\psc}({B_0})$. By a well known argument (see~\cite[Lemma
 2.3.2]{W2}), this isotopy gives rise to a concordance:
$\bar{h}_{\con}^{\beta}$ on $B_0\times [0,\lambda+2]$ for some
$\lambda>0$ which takes the form of~product metrics:
\begin{gather*}
 h^{\beta} +{\rm d}t^{2}\quad\text{on}\quad B_0 \times [\lambda+1, \lambda+2]\qquad
 \text{and}\qquad g_{\std}+{\rm d}t^{2}\quad \text{on}\quad B_0\times[0,1].
\end{gather*}
Note that on the slice $N_{\frac{1}{2}}\times [0,1]$, the metric
$\bar{h}_{\con}^{\beta}$ pulls back to a metric of the form
\begin{gather}\label{new03}
 {\rm d}s_{p}^{2}+g_{\tor}^{q+1}+{\rm d}t^{2}.
\end{gather}
 Making use of~\cite[Lemma 5.2.5]{W2} we can perform an isotopy of
 the metric $\bar{h}_{\con}^{\beta}$, adjusting only on~$N_{\frac{1}{2}}\times [0,1]$, to replace the
 $g_{\tor}^{q+1}+{\rm d}t^{2}$ factor in~\eqref{new03} with
 $g_{\boot}^{q+2}(1)_{\Lambda, \bar{l}}$ for some appropriately large
 $\Lambda>0$ and with $\bar{l}$ satisfying $l_1=l_4=1$. We~denote the
 resulting psc-metric $\bar{h}_{\pre}^{\beta}$ on~$B_0 \times [0,
 \lambda+2]$. We~consider $B_0 \times [0, \lambda+2]$ as a long
 collar of $\bar B$ and assume that the map $\bar f$ restricted to~$B_0 \times [0, \lambda+2]$ is given by $\bar f(x,t) = f_0(x)$. Let
 $\bar E_0$ be a manifold given by pulling back the fibre bundle
\begin{gather*}
 \begin{diagram}
 \setlength{\dgARROWLENGTH}{1.95em}
\node{\bar E_0}
 \arrow{e,t}{\hat{\bar f}_0}
 \arrow{s,l}{p}
\node{E(L)}
 \arrow{s}
 \\
\node{B\times [0, \lambda+2]}
 \arrow{e,t}{\bar f_0}
\node{BG,}
\end{diagram}
\end{gather*}
where $\bar f_0$ is a restriction of $\bar f$.

We now use the metric $\bar{h}_{\pre}^{\beta}$ on $B_0 \times [0,
 \lambda+2]$ to extend the metric $g^{\p}$ from the boundary \mbox{$E_0=\p
X$} to a positive scalar curvature submersion metric $\bar
g^{\p}_{\pre}$ on the total space of this bundle,~$\bar E_{0}$. To~do
this, we follow the method described in Lemma~\ref{pscliftlemma} with
respect to the product distribution $\mathcal{H}_{0}\times \mathbb{R}$
to extend $g^{\p}$ to a psc submersion metric, on $\bar{E}_0\cong
E_0\times [0, \lambda+2]$. We~denote the resulting metric, $\bar
g^{\p}_{\pre}$. Note, near $E_0\times \{\lambda+2\}$, this is a
Riemannian cylinder of a~psc-submersion metric over $\big(B_{0},
h^{\beta}_{\std}\big)$ with the fibre $(L, \bar{\tau} g_L)$ for
$\bar{\tau}=\bar{\tau}\big(\bar{h}_{\pre}^{\beta}, \mathcal{H}_{0}\times
\mathbb{R}\big)$, as defined in~\eqref{taumin}.

\begin{figure}[!htbp]
\vspace{32mm}
\hspace{10mm}
\begin{picture}(0,0)
\includegraphics{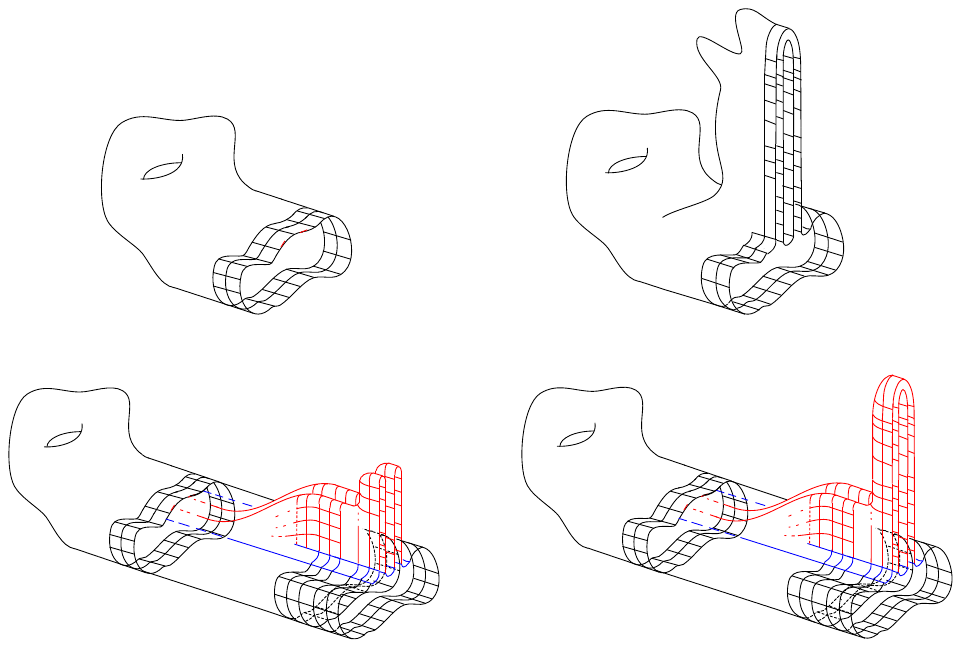}%
\end{picture}
\setlength{\unitlength}{3947sp}
\begingroup\makeatletter\ifx\SetFigFont\undefined%
\gdef\SetFigFont#1#2#3#4#5{%
 \reset@font\fontsize{#1}{#2pt}%
 \fontfamily{#3}\fontseries{#4}\fontshape{#5}%
 \selectfont}%
\fi\endgroup%
\begin{picture}(5079,1559)(1902,-7227)
\put(2100,-7000){\makebox(0,0)[lb]{\smash{{\SetFigFont{10}{8}{\rmdefault}{\mddefault}{\updefault}{\color[rgb]{0,0,1}$\mu_{\bar{g}_{\pre}^{\p}}(g)$}%
}}}}
\put(6600,-6750){\makebox(0,0)[lb]{\smash{{\SetFigFont{10}{8}{\rmdefault}{\mddefault}{\updefault}{\color[rgb]{0,0,1}$\mu_{\bar{g}}(\mu_{\bar{g}_{\pre}^{\p}}(g))$}%
}}}}
\put(2100,-5350){\makebox(0,0)[lb]{\smash{{\SetFigFont{10}{8}{\rmdefault}{\mddefault}{\updefault}{\color[rgb]{0,0,1}$g$}%
}}}}
\put(5800,-4350){\makebox(0,0)[lb]{\smash{{\SetFigFont{10}{8}{\rmdefault}{\mddefault}{\updefault}{\color[rgb]{0,0,1}A typical element of $\Riem_{\mathcal{S}_1}^{\psc}(X_1, \p X_1)_{h^{\beta}_1}$}%
}}}}
\end{picture}%
\caption{Representative elements of the spaces from the commutative
 diagram (\ref{bdydiag-1}) above in the case when $L$ is a point.}
\label{thmAdiagram-1}
\end{figure}
For any element $g\in \Riem_{\mathcal{S}}^{\psc}(X, \p
X)_{h^{\beta}_0}$, the metric $g\cup_{\p X} \bar g^{\p}_{\pre}$ on
$X\cup \bar E_0$ (obtained by the obvious gluing) is denoted by
$g_{\std}$ and is an element of the space
$\Riem_{\mathcal{S}}^{\psc}(X,\p X)_{h^{\beta}_{\std}}$. This gives a
map
\begin{gather*}
\mu_{\bar g^{\p}_{\pre}}\colon\ \Riem_{\mathcal{S}}^{\psc}(X, \p X)_{h^{\beta}_0}
\to \Riem_{\mathcal{S}}^{\psc}(X,\p X)_{h^{\beta}_{\std}}.
\end{gather*}
We denote $\Riem_{\mathcal{S}-\boot}^{\psc}(X,\p
X)_{h^{\beta}_{\std}}:=\mathrm{Im}(\mu_{\bar g^{\p}_{\pre}} )$. This
new metric is depicted in the bottom left of~Figure~\ref{thmAdiagram-1},
with the original metric $g$ depicted in the top left. For~clarity,
this figure depicts only the case when $L$ is a point. Lemma 6.5.5 of
\cite{W2}, consolidating work from previous sections, shows that in
the case when $L$ is a point (and so $\p X=\beta X$), the map
$\mu_{\bar{g}_{\pre}}$ is a weak homotopy equivalence. This is
demonstrated by constructing certain isotopies of psc-metrics on the
cylinder $B_{0}\times [0,\lambda+2]$. In our case, where $L$ is not
simply a point, we must lift such a compact family of psc-metrics on
$B_{0}\times [0,\lambda+2]$ to a corresponding compact family of
psc-metrics on $\bar{E}_0$. This is done using Lemma
\ref{pscliftlemma}, with respect to the horizontal distribution
$\mathcal{H}_{0}\times \mathbb{R}$ and a sufficiently small scaling
constant $\bar{\tau}_{\min}>0$.

Turning our attention momentarily to the space $X_1=X\cup\bar{E}$,
recall that, though suppressed in the notation, the map $\mu_{(\bar
 B,\bar f, \bar h^{\beta})}$ implicitly associates to the bundle,
$\bar{p}\colon\bar{E}\to\bar{B}$, an admissible horizontal distribution,
$\bar{\mathcal{H}}$ extending $\mathcal{H}_{0}$ (earlier we denoted
this $\xi(\mathcal{H}_{0})$).

Consider now an element of the space
$\Riem_{\mathcal{S}-\boot}^{\psc}(X,\p
X)_{h^{\beta}_{\std}}:=\mathrm{Im}(\mu_{\bar g^{\p}_{\pre}} )$. Such
an element has, near the boundary, a ``standard piece" where the
submersion metric $\bar g_{\pre}^{\p}$ restricts over a~region where
the base metric takes the form
${\rm d}s_{p}^{2}+g_{\boot}^{q+2}(1)_{\Lambda, \bar{l}}$. Replacing this
piece, on the base, with $g_{\tor}^{p+1}+g_{\tor}^{q+1}$ near the
boundary, determines a base metric on $\bar{B}$. This is exactly what
is done in~\cite[Theorem~A]{W2}, determining a homeomorphism onto its
image in the relevant spaces of~psc-metrics on the bases. Using Lemma
\ref{pscliftlemma}, with respect to the distribution
$\bar{\mathcal{H}}$ allows us to lift the resulting base metric on
$\bar{B}$ to a psc-submersion metric on $\bar{E}$. This determines a
map
\begin{gather*}
 \mu_{\Estd}\colon\ \Riem_{\mathcal{S}-\boot}^{\psc}(X,\p X)_{h^{\beta}_{\std}} \to
 \Riem_{\mathcal{S}_1}^{\psc}(X_1,\p X_1)_{h^{\beta}_1}.
\end{gather*}
Replacing the codomain of this map by its image, denoted
\begin{gather*}
\Riem_{\mathcal{S}_1-\Estd}^{\psc}(X_1, \p X_1)_{h^{\beta}_1}\subset
\Riem_{\mathcal{S}_1}^{\psc}(X_1,\p X_1)_{h^{\beta}_1},
\end{gather*}
we obtain the lower horizontal map in diagram (\ref{bdydiag-1}). A
typical element in the image of this map is depicted in the lower
right of Figure~\ref{thmAdiagram-1}. As~with the base case in
\cite[Theorem~A]{W2}, this lower horizontal map is demonstrably a
homeomorphism.

It remains to show that the inclusion
$\Riem_{\mathcal{S}_1-\Estd}^{\psc}(X_1, \p X_1)_{h^{\beta}_1} \subset
\Riem_{\mathcal{S}_1}^{\psc}(X_1,\p X_1)_{h^{\beta}_1}$ is a weak
homotopy equivalence. Note that the notaiton ``$\Estd$" used in
describing the former space (originating in~\cite{W2}) is intended to
convey the fact that these metrics take a standard form on~a~much
larger region than typical metrics in
$\Riem_{\mathcal{S}_1}^{\psc}(X_1, \p X_1)_{h^{\beta}_1}$ and are thus
``Extra-standard''.

A typical element of $\Riem_{\mathcal{S}_1}^{\psc}(X_1, \p
X_1)_{h^{\beta}_1}$ (in the case when $L$ is a point and so $\p
X_1=\beta X_1$) is depicted in the upper right of
Figure~\ref{thmAdiagram-1}. Showing that, in the case when $L$ is a
point, a compact family of metrics in
$\Riem_{\mathcal{S}_1}^{\psc}(X_1, \p X_1)_{h^{\beta}_1}$ could be
continuously moved to a compact family of extra standard metrics in
$\Riem_{\mathcal{S}_1-\Estd}^{\psc}(X_1,\p X_1)_{h^{\beta}_1}$,
without moving already extra-standard metrics out of that space, is
the most technically difficult part of the proof of~\cite[Theorem~A]{W2}. This is done in~\cite[Section~6.6]{W2}.

In our case, $L$ is not simply a point. All of the adjustment takes
place off of $X$ and so we restrict our attention to $X_1\setminus
X$. Once again however, the argument in~\cite[Theorem~A]{W2} involves
continuous deformations of compact families of psc-metrics on the
base. Thus, applying Lemma~\ref{pscliftlemma}, for the one admissible
horizontal distribution $\bar{\mathcal{H}}$ and a sufficiently small
scaling constant $\bar{\tau}_{\min}>0$, all such (compact)
deformations of psc-metrics lift to appropriate deformations on the
total space $X_1\setminus X$. This completes the proof of Theorem~\ref{thm-fb-sing}.

\section{Proof of Theorem~\ref{thmA}}
{We now come to the proof of Theorem~\ref{thmA}. We~noted earlier (see Remark~\ref{Part(i)A}) that part~$(i)$ of~Theorem~\ref{thmA} follows from the main
 result of Chernysh in~\cite{Ch}. Thus, our focus is on part~$(ii)$.}

Let $X_{\Sigma}=X\cup_{\p X} -N(\beta X)$ be a pseudo-manifold as
above, where $X$ is a manifold with boundary $\p X$, the total space
of a bundle, $p\colon \p X\to \beta X$ with fibre $L$. This bundle has a~structure map $f\colon\beta X\to BG$. Recall that $G$ is a subgroup of the
isometry group of the simple link~$(L, g_{L})$. We~begin by recalling
the restriction map
\begin{gather*}
 \mathrm{res}\colon\ \Riem^{\psc}(X, \p X) \to \Riem^{\psc}(\p X),\qquad
\mathrm{res}\colon\ g \mapsto g|_{\p X}.
\end{gather*}
As we mentioned earlier, from work of Chernysh~\cite{Ch} and
Ebert--Frenck~\cite{EF}, we know that this map is a Serre fibre bundle.

Now we consider two pseudo-manifolds $X_{\Sigma} = X\cup_{\p X}
-N(\beta X)$ and $X_{\Sigma,1} = X_1\cup_{\p X} -N(\beta X_1)$, where
$X_1= X\cup_{\p X} Z$, and the manifold $Z$ is given by an elementary
psc-bordism $\bar B\colon \beta X \bord \beta X_1$ and a structure map
$\bar f\colon \bar B \to BG$, so that $f=\bar f|_{\beta X}$ and $f_1=\bar
f|_{\beta X_1}$. Namely, the manifold $Z$ is a total space of the
following smooth bundle:
\begin{gather*}
 \begin{diagram}
 \setlength{\dgARROWLENGTH}{1.95em}
\node{Z}
 \arrow{e,t}{\hat{\bar f}}
 \arrow{s,l}{p}
\node{E(L)}
 \arrow{s}
 \\
\node{\bar B}
 \arrow{e,t}{\bar f}
\node{BG.}
\end{diagram}
\end{gather*}
Consider a pair of psc-submersion metrics, $g^{\p}_0\in
\Riem_{\mathcal{S}}^{\psc}(\p X)$ and $g^{\p}_1\in
\Riem_{\mathcal{S}_1}^{\psc}(\p X_1)$, where $\mathcal{S}_1$ is the
analogue of the map $\mathcal{S}$ for the bundle $\p X_1\to \beta
X_1$. Thus, $g^{\p}_0$ and $g^{\p}_1$ are psc-submersion metrics over
$\beta X$ (respectively, over $\beta X_1$), with fibre metrics $\tau_0
g_L$ and $\tau_1 g_L$ for some constants $\tau_0, \tau_1>0$. We~denote by $h_0^{\beta}\in \Riem^{\psc}(\beta X)$ and $h_1^{\beta}\in
\Riem^{\psc}(\beta X_1)$, the respective base metrics
$\mathcal{B}\big(g^{\p}\big)=h^{\beta}=:h_0^{\beta}$ and
$\mathcal{B}_1\big(g^{\p}_1\big)=:h_1^{\beta}$.

Now we notice that the spaces $\Riem_{\mathcal{S}}^{\psc}(X, \p
X)_{h^{\beta}_0}$ and $\Riem_{\mathcal{S}}^{\psc}(X_1, \p
X_1)_{h^{\beta}_1}$ coincide with the fibres
\begin{gather*}
 \Riem_{\mathcal{S}}^{\psc}(X, \p X)_{h^{\beta}_0}= \mathrm{res}^{-1}_0\big(g_0^{\p}\big),\qquad
 \Riem_{\mathcal{S}}^{\psc}(X_1, \p X_1)_{h^{\beta}_1}= \mathrm{res}^{-1}_1\big(g_1^{\p}\big),
\end{gather*}
of the corresponding restriction maps:
\begin{gather*}
 \mathrm{res}_0\colon\ \Riem^{\psc}(X,\p X) \to \Riem^{\psc}(\p X), \qquad
 \mathrm{res}_1\colon\ \Riem^{\psc}(X_1,\p X_1) \to \Riem^{\psc}(\p X_1).
\end{gather*}
We consider the
inclusion map
\begin{gather*}
 \Riem_{\mathcal{S}}^{\psc}(\p X)\hookrightarrow \Riem^{\psc}(\p X).
\end{gather*}
Now, by definition, we obtain the space $\Riem^{\psc}(X_{\Sigma})$ as
a pull-back in the following diagram
\begin{gather*}
 \begin{diagram}
 \setlength{\dgARROWLENGTH}{1.95em}
\node{\Riem^{\psc}(X_{\Sigma})}
 \arrow{e,t}{\mathrm{res}_{\Sigma}}
 \arrow{s,l}{r}
\node{\Riem_{\mathcal{S}}^{\psc}(\p X)}
 \arrow{s,l,J}{}
 \\
\node{\Riem^{\psc}(X,\p X)}
 \arrow{e,t}{\mathrm{res}}
\node{\Riem^{\psc}(\p X),}
\end{diagram}
 \end{gather*}
 where the right vertical map is inclusion. The
 left vertical map sends a psc-metric on $X_{\Sigma}$ to its
 restriction to $X$.

There are two more maps we must define. Recall we have a projection map,
\begin{gather*}
\mathcal{B}\colon\ \Riem_{\mathcal{S}}^{\psc}\to \Riem^{\psc}(\beta X),
\end{gather*}
sending each submersion metric to its base. In Lemma~\ref{lifthe} we
showed that $\mathcal{B}$ forms part of a~homotopy equivalence. In
particular the lifting map~\eqref{SliftplusH}
\begin{gather*}
\mathcal{S}_{\mathcal{H}}^{\psc}\colon\ \Riem^{\psc}(\beta
X) \to \Riem_{\mathcal{S}}^{\psc},
\end{gather*}
for any choice of horizontal
distribution, $\mathcal{H}$, on the bundle $p\colon\p X\to \beta X$, is a
homotopy inverse. Composing, for some choice of $\mathcal{H}$, the map
$\mathcal{S}_{\mathcal{H}}^{\psc}$ with the inclusion map above yields
the map
\begin{gather*}
\pi_{\mathcal{H}}\colon \ \Riem^{\psc}(\beta
X)\xrightarrow[]{\mathcal{S}_{\mathcal{H}}^{\psc}}
\Riem_{\mathcal{S}}^{\psc} (\p X)\rh \Riem^{\psc}(\p X).
\end{gather*}
\noindent The restriction map $r\colon \Riem^{\psc}(X_{\Sigma})
\xrightarrow[]{} \Riem^{\psc}(X, \p X)$ above is in fact a
homeomorphism onto its image, $\Riem_{\mathcal{S}}^{\psc}(X, \p X)$,
the space of well adapted psc-metrics on $X$. Replacing the codomain
$\Riem^{\psc}(X, \p X)$ with $\Riem_{\mathcal{S}}^{\psc}(X, \p X)$, we
then compose $r$ with the restrictions $\mathrm{res}$ and
$\mathcal{B}$ to obtain the map
\begin{gather*}
{\mathrm{res}}_{\Sigma}\colon\ \Riem^{\psc}(X_{\Sigma}) \xrightarrow[]{r}
\Riem_{\mathcal{S}}^{\psc}(X, \p X) \xrightarrow[]{\mathrm{res}}
\Riem_{\mathcal{S}}^{\psc}(\p
X)\xrightarrow[]{\mathcal{B}}\Riem^{\psc}(\beta X).
\end{gather*}
As $r$ (as deployed above) is now a homeomorphism, $\mathrm{res}$
is a Serre fibre bundle and ${\mathcal{B}}$ is a~homotopy equivalence,
we conclude that the composition map, $\mathrm{res}_{\Sigma}$, is a
Serre fibre bundle. Recalling that the spaces $\Riem^{\psc}(X,\p
X)_{g_0^{\p}}=\mathrm{res}_{0}^{-1}\big(g_0^{\p}\big)$ and
$\Riem_{\mathcal{S}}^{\psc}(X,\p X)_{h_0^{\beta}}$ are in fact the
same, we obtain the following diagram of fibre bundles:
{\samepage\begin{gather*}
 \begin{diagram}
 \setlength{\dgARROWLENGTH}{1.95em}
\node{\Riem_{\mathcal{S}}^{\psc}(X,\p X)_{h_0^{\beta}}}
 \arrow{e,t}{i_{\Sigma}}
 \arrow{s,l}{=}
\node{\Riem^{\psc}(X_{\Sigma})}
 \arrow{e,t}{{\mathrm{res}}_{\Sigma}}
 \arrow{s,l}{r}
\node{\Riem^{\psc}(\beta X)}
 \arrow{s,l}{\pi_{\mathcal{H}}}
 \\
\node{\Riem^{\psc}(X,\p X)_{g_0^{\p}}}
 \arrow{e,t,J}{}
\node{\Riem^{\psc}(X,\p X)}
 \arrow{e,t}{\mathrm{res}}
\node{\Riem^{\psc}(\p X).}
\end{diagram}
\end{gather*}
Here, the top left horizontal map $i_{\Sigma}$ is the metric attaching
map defined in~\ref{metricattach}.

}

Let $\big(\bar B,\bar h_{\beta}\big)\colon \big(\beta X,h_{0}^{\beta}\big) \bord \big(\beta
X_1,h_1^{\beta}\big)$ be an elementary psc-bordism (with $p, q\geq 2$),
which is given together with a map $\bar f\colon \bar B \to BG$ such that
$f=\bar f|_{\beta X}$ and $f_1=\bar f|_{\beta X_1}$. In turn, the
psc-bordism $\big(\bar B,\bar h_{\beta}\big)$ determines a corresponding
psc-bordism $\big(Z,\bar g^{\p}\big)\colon \big(\p X, g^{\p}_0\big)\bord \big(\p X_1,
g^{\p}_1\big)$ by means of Lemma~\ref{pscliftlemma}. In particular, the
psc-submersion metrics $g^{\p}_0$ and $g^{\p}_1$ have base metrics
$h_{0}^{\beta}$ and $h_{1}^{\beta}$. Theorem~\ref{thm-boundary} and
Theorem~\ref{thm-fb-sing} give us the following homotopy equivalences:
\begin{gather*}
\mu_{\bar B,\bar g^{\beta}}\colon\ \Riem^{\psc}(\beta X)
\stackrel{\simeq}{\longrightarrow}
\Riem^{\psc}(\beta X_1),\nonumber
\\
\mu_{Z,\bar g^{\beta}}\colon\ \Riem^{\psc}(\p X)
\stackrel{\simeq}{\longrightarrow}
\Riem^{\psc}(\p X_1),\nonumber
\\
\mu_{Z,\bar g^{\p}}\colon\ \Riem^{\psc}(X,\p X)_{g^{\p}}
\stackrel{\simeq}{\longrightarrow}
\Riem^{\psc}(X_1,\p X_1)_{g^{\p}_1},\nonumber
\\
\mu_{\bar B, \bar f, \bar h^{\beta}}\colon\ \Riem_{\mathcal{S}}^{\psc}(X, \p X)_{h^{\beta}}
\stackrel{\simeq}{\longrightarrow}
\Riem_{\mathcal{S}_1}^{\psc}(X_1, \p X_1)_{h^{\beta}_1}.
\end{gather*}
We obtain the following commutative diagram:
\begin{gather*}
\xymatrix @!=3pc {
 & \Riem_{\mathcal{S}}^{\psc}(X, \p X)_{h^{\beta}} \ar[rr]^-{i_{\Sigma}}
\ar@{->}[dd]|!{[d];[d]}\hole
\ar[dl]_{\mu_{\bar B, \bar f, \bar h^{\beta}}\!\!\!\!}
& & \Riem^{\psc}(X_{\Sigma}) \ar@{->}[dd] |!{[d];[d]}\hole
\ar@{->}[rr]^{\mathrm{res}_{\Sigma}}\ar[dl]_*+[o][F-]{\simeq} 
&& \Riem^{\psc}(\beta X) \ar@{->}[dd]^(0.4){\pi_{\mathcal{H}_0}}\ar[dl]_{\mu_{\bar B,\bar g^{\beta}}\!\!\!\!}
\\
 \Riem_{\mathcal{S}_1}^{\psc}(X_1, \p X_1)_{h^{\beta}_1} \ar@{->}[dd]
 \ar@{->}[rr]^{\ \ \ \ \ i_{\Sigma}} && \Riem^{\psc}(X_{\Sigma,1}) \ar@{->}[dd]
 \ar@{->}[rr]^{\ \ \ \ \ \ \ \mathrm{res}_{\Sigma}} &&\Riem^{\psc}(\beta X_1)
 \ar@{->}[dd]^(0.4){\pi_{\mathcal{H}_1}}
\\
& \Riem^{\psc}(X,\p X)_{g^{\p}}
\ar@{->}[rr]^{\ \ \ i} |!{[r];[r]}\hole \ar[dl]_{\mu_{Z,\bar g^{\p}}\!\!\!\!}
& & \Riem^{\psc}(X,\p X) \ar@{->}[rr]^(0.4){\mathrm{res}}|!{[r];[r]}\hole
\ar[dl]_*+[o][F-]{\simeq}&&
\Riem^{\psc}(\p X) \ar[dl]_{\mu_{Z,\bar g^{\beta}}\!\!\!\!\!}
\\
\Riem^{\psc}(X_1,\p X_1)_{g^{\p}}
\ar@{->}[rr]^{i} & & \Riem^{\psc}(X_1,\p X_1) \ar@{->}[rr]^{\mathrm{res}}&&
\Riem^{\psc}(\p X_1)
}
\end{gather*}
where all horizontal rows are Serre fibre bundles, and the maps
$\pi_{\mathcal{H}_0}$ and $\pi_{\mathcal{H}_1}$ are defined as above.
Recall in particular that the left most vertical arrows (front and
back) represent equality. Commutativity of this diagram makes it
evident that the circled maps above are weak homotopy
equivalences. This proves Theorem~\ref{thmA}. \hfill $\Box$
\section{Some further developments}
In this section we would like to emphasize that recent results
concerning homotopy groups of the spaces, $\Riem^{\psc}(M)$, of
psc-metrics on a smooth closed manifold $M$ (see
\cite{BER-W,E-RW2,E-RW3,P2,P1}) could be applied directly and
indirectly to the case of manifolds with $(L,G)$-fibered
singularities. In~particular, we would like to attract the attention
of topologically-minded experts to relevant conjectures and results
from the recent work~\cite{BPR,BPR2}.

\subsection{Index-difference map}
We mentioned earlier that the homotopy-invariance of various spaces of
psc-metrics is a crucial property in helping detect their non-trivial
homotopy groups. With this in mind, there is a~secondary index
invariant, the index-difference map
\begin{gather}\label{eq1}
\mathsf{inddiff}_{g_0}\colon\ \Riem^{\psc}(M)\to \Omega^{\infty+n+1}\mathbf{KO},
\end{gather}
which is defined as follows. Let $g_0\in \Riem^{\psc}(M)$ be a base
point. Then for any psc-metric $g$ on $M$, there is an interval
$g_t=(1-t)g_0+tg$ of metrics such that a corresponding curve of the
Dirac operators $D_{g_t}$ starts and ends at the subspace
$\big(\mathbf{Fred}^n\big)^{\times}\subset \mathbf{Fred}^n$ of invertible
Dirac operators. Since the subspace $\big(\mathbf{Fred}^n\big)^{\times}$ is
contractible, the curve $D_{g_t}$ is a loop in the space
$\mathbf{Fred}^n$ of~all Dirac operators. This space, in turn, is
homotopy equivalent to the loop space $\Omega^{\infty+n}\mathbf{KO}$
representing a shifted $KO$-theory,
i.e., $\pi_q\big(\Omega^{\infty+n}\mathbf{KO}\big)=KO_{n+q}$. Thus the curve
$D_{g_t}$ gives an~element in $\Omega^{\infty+n+1}\mathbf{KO}$,
well-defined up to homotopy, to determine the map~\eqref{eq1}.

\begin{Theorem}[Botvinnik--Ebert--Randal-Williams~\cite{BER-W}, and Perlmutter~\cite{P2,P1}] 
Assume $M$ is a spin
 manifold with $\dim M \geq 5$ and $\Riem^{\psc}(M)\neq \varnothing$
 with a base point $g_0\in \Riem^{\psc}(M)$. Then the index-diffence
 map~\eqref{eq1} induces a non-trivial homomorphism in the
 homotopy groups
\begin{gather*}
 (\mathsf{inddiff}_{g_0})_*\colon\ \pi_q(\Riem^{\psc}(M))\to KO_{q+n+1},
\end{gather*}
when the target group $KO_{q+n+1}$ is non-trivial.
\end{Theorem}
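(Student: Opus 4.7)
The plan is to reduce the statement, via homotopy invariance, to a stable computation on moduli spaces of spin manifolds, where the index-difference factors through the Madsen--Tillmann spectrum and can be identified with the Atiyah--Bott--Shapiro orientation.

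First, I would appeal to the surgery invariance of $\Riem^{\psc}(M)$ due to Chernysh (also revisited in the present paper): the homotopy type of $\Riem^{\psc}(M)$ depends only on the spin bordism class of $M$, so non-triviality of $(\mathsf{inddiff}_{g_0})_*$ for any manifold in a bordism class implies non-triviality for every representative. In particular I may freely stabilize, replacing $M$ by $M\#(S^n\times S^n)^{\#g}$ for large $g$, where $2n=\dim M$, since such a stabilization is a composition of admissible surgeries and the index-difference map is bordism-natural up to translation by a fixed basepoint class.

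Second, I would assemble the index-difference maps for varying $g$ into a single map of infinite loop spaces. Parametrized family index theory should produce a spectrum-level map from (an appropriate completion of) the stable moduli space $\hocolim_g B\Diff^{\mathrm{spin}}(M\#(S^n\times S^n)^{\#g})$ to $\Omega^{\infty+n+1}\mathbf{KO}$, compatible with $\mathsf{inddiff}_{g_0}$ after a choice of basepoint. By the Galatius--Madsen--Tillmann--Weiss theorem, the left-hand side, after group completion, is equivalent to $\Omega^{\infty}\MTSpin(2n)$. The index-difference then corresponds in this picture to a map of spectra $\MTSpin(2n)\to \Sigma^{-(n+1)}\mathbf{KO}$ built from the universal $\widehat{A}$-class carried by the tautological bundle over $\MTSpin(2n)$.

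Third, I would deduce non-triviality from the classical fact that the Atiyah--Bott--Shapiro orientation $\mathbf{MSpin}\to \mathbf{KO}$, and its Madsen--Tillmann analogue, is surjective on homotopy groups whenever the target is non-zero; this follows from the Anderson--Brown--Peterson 2-local splitting of $\mathbf{MSpin}$ together with a rational computation identifying the image as the subgroup generated by $\widehat{A}$-characteristic numbers. Pulling nonzero classes back through the Galatius--Randal-Williams identification and then through the scanning-type section on the PSC side yields explicit non-trivial elements of $\pi_q(\Riem^{\psc}(M))$ whose image under $(\mathsf{inddiff}_{g_0})_*$ realizes the prescribed non-zero element of $KO_{q+n+1}$. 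The hardest step is the second one: constructing the infinite-loop factorization requires treating the index-difference as a genuine natural transformation of infinite loop spaces, which in turn demands Hitchin-style Dirac bundle constructions over families with boundary, together with the Chernysh-type fibration theorem that makes $\Riem^{\psc}(M)$ itself into a module over an $E_\infty$-operad on which the Madsen--Tillmann spectrum acts. This is essentially the content of Botvinnik--Ebert--Randal-Williams, with Perlmutter supplying the odd-dimensional extension.
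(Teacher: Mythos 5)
This theorem is not proved in the paper at all: it is quoted verbatim from Botvinnik--Ebert--Randal-Williams and Perlmutter as background for the conjectures in Section~5, and the only ``proof-adjacent'' material the paper supplies is the definition of $\mathsf{inddiff}_{g_0}$ as a loop of Dirac operators in $\mathbf{Fred}^n$. So there is no internal argument to compare yours against; what you have written is a reconstruction of the strategy of the cited papers, and as a roadmap it is essentially faithful: (a) reduce to a convenient representative of the spin bordism class via the Chernysh--Walsh surgery equivalences and stabilize, (b) factor the index difference through $\Omega^{\infty+1}\MTSpin(2n)$ using the Galatius--Randal-Williams identification of the stable moduli space, and (c) conclude from surjectivity on homotopy of the $KO$-theoretic Thom class $\MTSpin(2n)\to\Sigma^{-2n}\mathbf{KO}$, which rests on Anderson--Brown--Peterson and the classical surjectivity of the Atiyah--Bott--Shapiro orientation.

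Read as a proof rather than a roadmap, however, several steps are asserted where the real work lies. The cobordism invariance you invoke in step one requires the relevant inclusions into the cobordism to be $2$-connected; for non-simply-connected $M$ one does not get a homotopy equivalence of psc spaces for free, and the propagation of non-triviality across the bordism class (together with the compatibility of $\mathsf{inddiff}$ with the gluing maps $\mu_{Z,\bar g}$, which is an additivity statement for the family index that must be checked) is itself a nontrivial part of the argument. The stabilization by $S^n\times S^n$ presupposes $\dim M$ even; the odd-dimensional case is not a routine corollary but goes through the restriction fibration $\Riem^{\psc}(W,\partial W)\to\Riem^{\psc}(\partial W)$ or, in Perlmutter's approach, through a different cobordism category of manifolds equipped with Morse-theoretic data, and dimension $5$ in particular is outside the range of the even-dimensional argument. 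Most importantly, step two is not something ``parametrized family index theory should produce'': family index theory gives the map \emph{out of} a moduli space of manifolds equipped with psc metrics, but the heart of the cited proof is the parametrized Gromov--Lawson--Chernysh construction showing that psc metrics can be propagated coherently over all the handle attachments in the hocolim, so that the index difference genuinely receives a map from $\Omega^{\infty+1}\MTSpin(2n)$ (up to a zig-zag of weak equivalences) intertwining it with $\Omega^{\infty+1}$ of the spectrum map. You correctly flag this as the hardest step and correctly attribute it, but in a blind proof it remains a citation, not an argument.
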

\subsection{Results and conjectures}
The reader should note that much is also known about the spaces of psc-metrics for non-simply
connected manifolds; see~\cite{E-RW2,E-RW3}. We~will however return to the
same examples we considered above. We~have the following conjectures
concerning examples (1) and (2):
\begin{Conjecture}
 Let $X_{\Sigma}$ be a spin $(\left<k\right>\!\fb)$-manifold. Assume
 $\dim X\geq 7$ and $X$ and $\beta X\neq \varnothing $ are
 simply-connected and $\Riem^{\psc}(X_{\Sigma})\neq \varnothing$ with a
 base point $g_0\in \Riem^{\psc}(X_{\Sigma})$. Then there is an
 index-difference map
\begin{gather*}
 \mathsf{inddiff}_{g_0}^{\left<k\right>}\colon\ \Riem^{\psc}(X_{\Sigma})\to
 \Omega^{\infty+n+1}\mathbf{KO}^{\left<k\right>},
\end{gather*}
which induces a non-trivial homomorphism in the homotopy groups
\begin{gather*}
 \big(\mathsf{inddiff}_{g_0}^{\left<k\right>}\big)_*\colon\
 \pi_q(\Riem^{\psc}(X_{\Sigma}))\to KO^{\left<k\right>}_{n+q+1}
\end{gather*}
when the target group $KO^{\left<k\right>}_{n+q+1}$ $(KO$ with
$\Z_k$-coefficients$)$ is non-trivial.
\end{Conjecture}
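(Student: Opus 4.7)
The plan is to mimic the closed-manifold construction of~\eqref{eq1} while leveraging the surgery-invariance proved in Theorem~\ref{thmA} in order to reduce the non-triviality question to a well-chosen bordism representative. The first task is to construct a Dirac operator $D_g$ for each well-adapted psc-metric $g \in \Riem^{\psc}(X_\Sigma)$. Since $X_\Sigma$ is a $(\left<k\right>\fb)$-manifold, its boundary is a principal $\Z_k$-cover $\p X \to \beta X$ with the cone $C\left<k\right>$ inserted as fibre. I would work on the smooth resolution $X$, viewed as a $\Z_k$-equivariant manifold near $\p X$, and impose an equivariant boundary condition so that the resulting operator is Fredholm with index in $KO^{\left<k\right>}_*$, matching the codomain of the index map $\alpha^{(\left<k\right>,\Z_k)\fb}$ discussed in Section~\ref{Intro}. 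A Lichnerowicz-type argument, using the positive scalar curvature on $X$ together with the scalar-flatness of the cone metric near the singular stratum, should then give invertibility of $D_g$ whenever $g$ is well-adapted and psc.

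With the Fredholm model in place, the index-difference map is defined exactly as in~\eqref{eq1}: the affine interpolation $g_t = (1-t)g_0 + t g$ lifts to a path of Dirac operators whose endpoints lie in the contractible subspace of invertible operators, determining a loop and hence a homotopy class in $\Omega^{\infty+n+1}\mathbf{KO}^{\left<k\right>}$. Continuity in $g$ and independence of the chosen interpolation up to homotopy would then package this into the claimed map $\mathsf{inddiff}_{g_0}^{\left<k\right>}$.

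To prove non-triviality in homotopy, I would combine the bordism-invariance corollary for spin $(\left<k\right>\fb)$-manifolds (the first corollary after Theorem~\ref{thmA}) with the theorem of Botvinnik--Ebert--Randal-Williams~\cite{BER-W} and Perlmutter~\cite{P2,P1}. The strategy is to find, within the bordism class $[X_\Sigma] \in \Omega^{\spin,\left<k\right>\fb}_n$, a representative of product or connected-sum form involving a closed spin manifold $M$ for which the ordinary $\mathsf{inddiff}_{g_M}$ is already known to detect the desired homotopy class in the target degree. A naturality statement for $\mathsf{inddiff}^{\left<k\right>}$ under such a decomposition, combined with a forgetful/coefficient-sequence map $\Omega^{\infty+n+1}\mathbf{KO}^{\left<k\right>} \to \Omega^{\infty+n+1}\mathbf{KO}$ coming from the Bockstein for $K$-theory with $\Z_k$-coefficients, would then pull back the non-trivial classes in $\pi_q(\Riem^{\psc}(M))$ to non-trivial classes in $\pi_q(\Riem^{\psc}(X_\Sigma))$ whenever $KO^{\left<k\right>}_{n+q+1}$ is non-zero.

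The main obstacle I expect is establishing the analytic compatibility of $\mathsf{inddiff}^{\left<k\right>}$ with the weak homotopy equivalences produced by Theorem~\ref{thmA}: the surgery-invariance in this paper is purely geometric and homotopical, whereas the index-difference is defined analytically via a family of Dirac operators, and one must check that the Gromov--Lawson-type psc-concordances used in the construction of the equivalence are compatible with the analytic comparison of Dirac operators on either side of the bordism. This is the singular analogue of the delicate step in~\cite{BER-W,EF}, and in the $\left<k\right>\fb$ setting it is compounded by the fact that the Fredholm theory near the conical stratum requires a $b$-calculus or edge-calculus framework rather than the standard pseudodifferential calculus on a closed manifold.
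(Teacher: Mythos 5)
First, a point of order: the statement you have ``proved'' is presented in the paper as a \emph{Conjecture}. The authors give no proof of it; the role of the paper's actual results (Theorem~\ref{thmA} and the corollary on bordism invariance of the homotopy type of $\Riem^{\psc}(X_{\Sigma})$ for spin $(\left<k\right>\!\fb)$-manifolds) is only to supply the homotopy-invariance input that a future proof would need. Your plan to use that corollary to replace $X_\Sigma$ by a convenient bordism representative is legitimate and is indeed the intended use of the paper's results, but the remaining steps of your sketch are precisely the open parts of the conjecture, and each contains a genuine gap.

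The first gap is analytic: the existence of the map $\mathsf{inddiff}_{g_0}^{\left<k\right>}$ is itself part of what is conjectured. What is available from~\cite{BR} is an index homomorphism $\alpha^{(\left<k\right>,\Z_k)\fb}$ on bordism classes, i.e., a single Fredholm index per manifold. The index difference requires a continuous \emph{family} of Fredholm operators parametrized by $\Riem^{\psc}(X_{\Sigma})\times I$, invertibility at psc endpoints (a Lichnerowicz argument in a calculus adapted to the conical stratum), and a proof that the resulting loop in the space of Fredholm operators is well defined up to homotopy and continuous in $g$. Your sketch defers every one of these steps (``should then give'', ``would then package''), and none of them follows from anything established in this paper. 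The second and more serious gap is the detection step. The theorem of~\cite{BER-W} is not proved by exhibiting a single good representative and invoking naturality: it rests on the action of a cobordism category on the spaces $\Riem^{\psc}(M)$, on the identification of the classifying space of that category, and on a parametrized index theorem. Running the analogous argument for $(\left<k\right>,\Z_k)$-manifolds requires all three ingredients in the singular setting, none of which currently exist. Moreover, your proposed substitute --- composing $\mathsf{inddiff}^{\left<k\right>}$ with a coefficient-sequence map to ordinary $\mathbf{KO}$ and comparing with the ordinary index difference on a closed factor --- produces a composite that factors through the ordinary $KO$-groups. At best this certifies non-triviality of that composite, which is strictly weaker than the conjectured non-triviality of $\big(\mathsf{inddiff}_{g_0}^{\left<k\right>}\big)_*$ into $KO^{\left<k\right>}_{n+q+1}$ whenever that group is non-zero: the groups $KO_*(\,\cdot\,;\Z_k)$ sit in a Bockstein sequence and contain classes invisible to any forgetful comparison with the non-singular theory, and detecting those requires families of psc-metrics whose index genuinely lives in the $\Z_k$-coefficient part. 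So your argument, even if all analytic details were supplied, would prove a weaker statement than the conjecture.
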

\begin{Conjecture}
Let $X_{\Sigma}$ be a spin manifold with $(\eta\fb)$-singularity of
dimension $n\geq 9$. Assume $\beta X\neq \varnothing$, and
$\Riem^{\psc}(X_{\Sigma})\neq \varnothing$ with a base point $g_0\in
\Riem^{\psc}(X_{\Sigma})$. Then there is an index-difference map
\begin{gather*}
 \mathsf{inddiff}_{g_0}^{\eta\fb}\colon\ \Riem^{\psc}(X_{\Sigma})\to
 \Omega^{\infty+n+1}\mathbf{KO}^{\eta\fb},
\end{gather*}
which induces a non-trivial homomorphism in the homotopy groups
\begin{gather*}
 \big(\mathsf{inddiff}_{g_0}^{\eta}\big)_*\colon\
 \pi_q(\Riem^{\psc}(X_{\Sigma}))\to KO^{\eta\fb}_{q+n+1}
\end{gather*}
when the target group $KO^{\eta\fb}_{q+n+1}= KO_{q+n+1}(\CP^{\infty})$
is non-trivial.
\end{Conjecture}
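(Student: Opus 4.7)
The plan is to extend the strategy of Botvinnik--Ebert--Randal-Williams \cite{BER-W} from the closed spin case to the $(\eta\fb)$-singular setting, combining three ingredients: the $\eta$-singular Dirac operator $D^{\eta\fb}$ constructed in \cite{BR,BPR}, the homotopy invariance of $\Riem^{\psc}(X_{\Sigma})$ under $\eta\fb$-admissible surgery proved in Theorem~\ref{thmA} above, and a Galatius--Madsen--Tillmann--Weiss type theorem for the bordism category of spin $(\eta\fb)$-manifolds.

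First I would construct the index-difference map itself. By \cite{BR,BPR}, $D^{\eta\fb}$ is an essentially self-adjoint Fredholm operator whose index lies in $KO^{\eta\fb}_n\cong KU_n$ and which becomes invertible when the metric is a well-adapted psc-metric. For any $g\in\Riem^{\psc}(X_{\Sigma})$, the linear path $(1-t)g_0+tg$ yields a loop in the space $\mathbf{Fred}^{n,\eta\fb}$ of such Fredholm operators beginning and ending in the contractible subspace of invertibles. Assuming the $\eta\fb$-analogue of the Atiyah--Singer identification $\mathbf{Fred}^{n,\eta\fb}\simeq \Omega^{\infty+n}\mathbf{KO}^{\eta\fb}$, the parametric (compact family) version of this construction produces the desired map $\mathsf{inddiff}_{g_0}^{\eta\fb}\colon\Riem^{\psc}(X_{\Sigma})\to \Omega^{\infty+n+1}\mathbf{KO}^{\eta\fb}$. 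Theorem~\ref{thmA} then reduces the non-triviality statement to any convenient representative of the bordism class $[X_{\Sigma}]\in\Omega^{\spin,\eta\fb}_n$.

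To detect non-trivial classes I would adapt the BER-W scheme to the singular setting. Starting from a chosen base manifold, one stabilises by attaching suitable $(L=S^1,G=S^1)$-handles; Theorem~\ref{thmA} guarantees that each such surgery induces a weak homotopy equivalence on the corresponding spaces of well-adapted psc-metrics. In the stable limit one expects a compatible family of maps $B\Diff^{\eta\fb}(W_g;\partial)\to \Omega^{\infty}\MTSpin^{\eta\fb}$ into a Madsen--Tillmann spectrum for spin $(\eta\fb)$-manifolds, together with a spectrum-level Atiyah--Singer map $\MTSpin^{\eta\fb}\to \Sigma^{-n}\mathbf{KO}^{\eta\fb}$. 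A parametrised index theorem should intertwine $\mathsf{inddiff}_{g_0}^{\eta\fb}$ with this composite, so that non-triviality on $\pi_q$ follows from non-triviality of the underlying map of spectra, exploiting $KO^{\eta\fb}_* = KO_*(\CP^{\infty})$ to import known computations from complex $K$-theory.

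The principal obstacle will be the Galatius--Madsen--Tillmann--Weiss step: identifying the classifying space of the bordism category of spin $(\eta\fb)$-manifolds with $\Omega^{\infty}\MTSpin^{\eta\fb}$ in sufficient generality to carry the BER-W argument through. A secondary analytic difficulty is establishing a parametrised index theorem for families of $D^{\eta\fb}$ with the naturality required across the conical stratum, where the fibrewise boundary conditions of the operator interact with the singularity. Both appear achievable by combining the analytic techniques of \cite{BR,BPR2} with existing bordism-categorical machinery, but careful handling of admissible horizontal distributions (in the sense of Section~\ref{submersionsection}) in families over the moduli space will be essential.
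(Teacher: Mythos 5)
The statement you are addressing is stated in the paper as a \emph{Conjecture}, not a theorem: the authors offer no proof of it, and your text is likewise not a proof but a research programme. The two essential gaps are ones you yourself name, and they are not peripheral technicalities --- they are precisely the content of the conjecture. First, the existence of the index-difference map already presupposes an identification $\mathbf{Fred}^{n,\eta\fb}\simeq\Omega^{\infty+n}\mathbf{KO}^{\eta\fb}$ for the relevant space of $\eta$-singular Dirac operators, together with contractibility of the invertibles and continuity of $g\mapsto D^{\eta\fb}_g$ in the appropriate gap topology across the conical stratum; none of this is established in the paper or in the references \cite{BR,BPR,BPR2} at the level of \emph{families}, and without it the map $\mathsf{inddiff}_{g_0}^{\eta\fb}$ is not defined. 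Second, the detection step requires both a Galatius--Madsen--Tillmann--Weiss theorem for the bordism category of spin $(\eta\fb)$-manifolds (identifying its classifying space with $\Omega^{\infty}\MTSpin^{\eta\fb}$ for a suitable Madsen--Tillmann spectrum) and a parametrised index theorem intertwining $\mathsf{inddiff}_{g_0}^{\eta\fb}$ with a spectrum map $\MTSpin^{\eta\fb}\to\Sigma^{-n}\mathbf{KO}^{\eta\fb}$. Neither exists in the literature for manifolds with fibred singularities; asserting that both ``appear achievable'' is a statement of hope, not an argument.

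A further caution: Theorem~\ref{thmA} does not by itself ``reduce the non-triviality statement to any convenient representative of the bordism class.'' It yields homotopy equivalences only for admissible surgeries (codimension at least three on the interior, and surgeries on the Bockstein in the stated range with $f\circ\iota$ nullhomotopic), and passing from these to full bordism invariance requires the $2$-connectivity modification of the cobordism $W_\Sigma$ discussed in the paper's Remark following the corollaries, which in turn uses the dimension and simple-connectivity hypotheses. The BER-W stabilisation argument moreover needs compatibility of these equivalences over the whole moduli space of stabilised manifolds, i.e.\ coherent choices of Gromov--Lawson bordism metrics and admissible horizontal distributions in families --- a point you flag but do not resolve. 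In short, your proposal correctly identifies the natural strategy (and it is presumably the one the authors have in mind), but every load-bearing step is deferred to results that have not been proved; the statement therefore remains, as the paper presents it, a conjecture.
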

It turns out that the above examples (3) and (4) (and many others; see
\cite{BPR}) lead to particular results concerning the homotopy groups
of the spaces $\Riem^{\psc}(X_{\Sigma})$. Let $X_{\Sigma}=X\cup_{\p X}
-N(\beta X)$ be a spin manifold with $(L,G)$-singularities. Let $g\in
\Riem^{\psc}(X_{\Sigma})$ be a well-adapted metric. Then $g$ determines
the metrics $g_{\p X}\in \Riem^{\psc}(\p X)$ and $g_{\beta X}\in
\Riem^{\psc}(\beta X)$ such that the bundle $\p X \to \beta X$ is a
Riemannian submersion. We~fix the metric $g_{\beta X,0}$.
This gives rise to a Serre fibre bundle
\begin{gather*}
\mathrm{res}_{\Sigma}\colon\ \Riem^{\psc}(X_{\Sigma})\to \Riem^{\psc}(\beta X)
\end{gather*}
with fibre $\Riem^{\psc}(X_{\Sigma})_{g_{\beta X},0}$, where
$\Riem^{\psc}(X_{\Sigma})_{g_{\beta X}}$ is the space of all metrics
$g\in \Riem^{\psc}(X_{\Sigma})$ which restrict to $g_{\beta X,0}$ on
$\Riem^{\psc}(\beta X)$. Since the metric $g_{\p X,0}$ on $\p X$ is
determined by the metric $g_{\beta X,0}$, the fibre
$\Riem^{\psc}(X_{\Sigma})_{g_{\beta X},0}$ coincides with the space
$\Riem^{\psc}(X)_{g_{\p X},0}$. Here is the result we need:

\begin{Theorem}[{see~\cite[Theorem~6.1]{BPR2}}]
Let $X_\Sigma$ be an $(L,G)$-fibred compact pseudo-manifold with $L$ a
simply connected homogeneous space of a compact semisimple Lie group.
Assume $\Riem^{\psc}(X_{\Sigma})\neq \varnothing$. Then there exists a
section $s\colon \mathcal{R}^{\psc}(\beta X)\to \mathcal{R}^{\psc}
(X_\Sigma)$ to $\mathrm{res}_{\Sigma}$. In particular, there is a
split short exact sequence:
\begin{gather*}
 0 \to \pi_q(\mathcal{R}_w^+ (X_\Sigma)_{g_{\beta
 X}})\xrightarrow{i_*} \pi_q(\mathcal{R}_w^+
 (X_\Sigma))\xrightarrow{(\mathrm{res}_{\Sigma})_*}\pi_q(\mathcal{R}^+(\beta
 X))\to 0 , \qquad q=0,1,\ldots.
\end{gather*}
\end{Theorem}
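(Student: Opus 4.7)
The plan is to construct a continuous section $s\colon \Riem^{\psc}(\beta X) \to \Riem^{\psc}(X_\Sigma)$ to the restriction map $\mathrm{res}_{\Sigma}$; once such an $s$ is available, the split short exact sequence follows immediately from the long exact sequence of homotopy groups associated to the Serre fibration $\mathrm{res}_\Sigma$ (established in the proof of Theorem~\ref{thmA}), since $s_{*}$ provides the splitting of $(\mathrm{res}_\Sigma)_{*}$. The crucial feature of the hypothesis is this: because $L = G/H$ with $G$ compact semisimple and $L$ simply connected, the normal homogeneous metric $g_L$ has strictly positive scalar curvature. Via the O'Neill formula recalled in Section~\ref{submersionsection}, this means the lifting map $\mathcal{S}^{\psc}_{\mathcal{H}}$ of~\eqref{SliftplusH} is defined on all of $\Riem^{\psc}(\beta X)$, and the fibre scaling $\bar\tau(h,\mathcal{H})$ from~\eqref{taumin} may be shrunk arbitrarily small while preserving positivity of scalar curvature of the lift, since the term $s_{g_L}/\bar\tau$ then dominates the other two.

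Given $h \in \Riem^{\psc}(\beta X)$ I would define $s(h)$ in three pieces. On $\partial X$, take $g^{\partial}_h := \mathcal{S}^{\psc}_{\mathcal{H}}(h)$ with fibre $\bar\tau\, g_L$. On the conical part $N(\beta X)$, apply the cone-attaching construction preceding~\eqref{metricattach}, lifted over $(\beta X, h)$ with the scalar-flat cone metric $g_{C(L)}$ and attaching metric $g_{\att(L)}$ (both compatible with the chosen fibre scaling). On $X$ itself, fix once and for all a long collar $c\colon \partial X \times [0,T] \hookrightarrow X$ and a decomposition $X = c(\partial X\times[0,T])\cup X_{\mathrm{core}}$; place the product $g^{\partial}_h + {\rm d}t^{2}$ on the collar and the restriction $g_0|_{X_{\mathrm{core}}}$ of the fixed basepoint on the core, matching the two across a narrow transition region near $t = T$.

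The main obstacle will be executing the transition continuously in $h$ while maintaining positive scalar curvature. My strategy here is to exploit the freedom in $\bar\tau$: a parameter-family version of Lemma~\ref{pscliftlemma} produces a single uniform scaling $\bar\tau_{\mathrm{min}}$ valid over any compact set of parameters, and along the transition region a Gromov--Lawson-style bending of the collar allows one to interpolate between $g^{\partial}_h$ and $g_0|_{\partial X}$ (after first psc-isotoping $g_0|_{\partial X}$ to have fibre scaling $\bar\tau_{\mathrm{min}}\, g_L$), while the dominant fibre scalar curvature $s_{g_L}/\bar\tau_{\mathrm{min}}$ absorbs any negative contribution arising from the bend. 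Continuity in $h$ follows from continuity of $\mathcal{S}^{\psc}_{\mathcal{H}}$, $\bar\tau$ and the parameter-wise interpolation. The resulting section satisfies $\mathrm{res}_\Sigma \circ s = \mathrm{id}$ by construction, producing the asserted split short exact sequence of homotopy groups.
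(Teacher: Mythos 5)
First, a point of comparison: the paper does not prove this statement at all --- it is imported verbatim from \cite[Theorem~6.1]{BPR2} (``Here is the result we need''), so there is no in-paper argument to measure yours against. Judged on its own terms, your strategy is the right one and, as far as the cited source goes, essentially the intended one: the entire content is the construction of a section, the splitting of the homotopy sequence being formal once $\mathrm{res}_{\Sigma}$ is known to be a Serre fibration; and the hypothesis on $L$ enters exactly where you say it does, namely that $s_{g_L}>0$ makes the O'Neill term $\tau^{-1}s_{g_L}$ dominant, so that \emph{arbitrary} (not merely psc) base metrics lift to psc submersion metrics after shrinking the fibre. This is also precisely why no such section is claimed for the scalar-flat links such as $L=S^1$.

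Two steps in your sketch need repair before it is a proof. First, your appeal to ``a single uniform scaling $\bar{\tau}_{\min}$ valid over any compact set of parameters'' does not suffice: the section must be defined on all of $\Riem^{\psc}(\beta X)$, which is not compact, so there is no uniform $\bar{\tau}_{\min}$ and no uniform length for the transition collar. The fix is to let everything depend continuously on $h$: use the continuous function $\bar{\tau}(h,\mathcal{H})$ of \eqref{taumin} itself, and absorb the $h$-dependent length $\Lambda(h)$ of the transition cylinder by an $h$-dependent collar diffeomorphism of $X$ (identity outside a collar), as in the Chernysh--Walsh--Ebert--Frenck arguments. Second, the transition between $g^{\partial}_h$ and $g_0|_{\partial X}$ should not be left to an unspecified ``Gromov--Lawson-style bending''; the clean route is to take the linear path $u\mapsto (1-u)h+uh_0$ in the convex space $\Riem(\beta X)$ (where $h_0$ is the Bockstein base metric of the fixed basepoint $g_0$), lift it with fibre $\tau g_L$ for $\tau$ small enough that $\min_u s_{(1-u)h+uh_0}+\tau^{-1}s_{g_L}-\tau\max_u|A|^2>0$, concatenate with the fibre-rescaling isotopies between $\bar{\tau}(h,\mathcal{H})$, $\tau$ and $\tau_0$ and with the straight-line isotopy of horizontal distributions (convexity of $\mathcal{HD}(p)$), and only then convert this psc isotopy of boundary metrics into a psc concordance on $\partial X\times[0,\Lambda(h)]$ glued onto $(X,g_0)$. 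With these repairs your construction yields a continuous section; the metric on the conical part is then determined automatically by the boundary submersion data, as in the definition of the attaching map $i_{\Sigma}$ in \eqref{metricattach}.
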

Here is one of the conclusions we would like to emphasize:
\begin{Corollary}[{see~\cite[Corollary 6.7]{BPR}}]
{\sloppy Let $X_\Sigma$ be an
 $(L,G)$-fibred compact pseudo-manifold with~$L$ a simply connected
 homogeneous space of a compact semisimple Lie group, and
 \mbox{$n-\ell-1\geq 5$}, where $\dim X=n$, $\dim L =\ell$. Let $g_0\in
 \Riem^{\psc}(X_{\Sigma})\neq \varnothing$ be a base point giving
 corresponding base points, the metrics $g_{\beta X,0}\in \Riem^{\psc}(\beta
 X)$, $g_{\p X,0}\in \Riem^{\psc}(\p X)$ and $g_{X,0}\in \Riem^{\psc}(X)_{g_{\p
 X,0}}$.

 }

If $M_\Sigma$ is spin and
simply connected, then we have the following commutative diagram:
\begin{gather*}
 \begin{diagram}
 \setlength{\dgARROWLENGTH}{1.95em}
\node{0\to \pi_q\Riem^{\psc}(X_\Sigma)_{g_{\beta X,0}}}
 \arrow{e,t}{j_*}
 \arrow{s,l}{\mathrm{inddiff}_{g_{X,0}}}
\node{\pi_q\mathcal{R}^{\psc} (X_\Sigma)}
 \arrow{e,t}{(\mathrm{res}_{\Sigma})_*}
 \arrow{s,l}{\mathrm{inddiff}_{g_0}}
\node{\pi_q\mathcal{R}^{\psc}(\beta X)\to 0}
 \arrow{s,l}{\mathrm{inddiff}_{g_{\beta X,0}}}
 \\
\node{\!\!\! 0\longrightarrow KO_{q+n+1}}
 \arrow{e}
\node{KO_{q+n+1}\oplus KO_{q+n-\ell} }
 \arrow{e}
\node{KO_{q+n-\ell}\to 0,}
\end{diagram}
\end{gather*}
where the homomorphisms $\mathrm{inddiff}_{g_{X,0}}$ and
$\mathrm{inddiff}_{g_{\beta X,0}}$ are both nontrivial whenever the
target groups are. In particular, the homomorphism
\begin{gather*}
 \mathrm{inddiff}_{g_0}\colon\ \pi_q\mathcal{R}^{\psc} (X_\Sigma) \to
 KO_{q+n+1}\oplus KO_{q+n-\ell}
\end{gather*}
is surjective rationally and surjective onto the torsion
of $KO_{q+n+1}\oplus KO_{q+n-\ell}$.
\end{Corollary}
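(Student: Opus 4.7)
The plan is to derive the diagram by combining two essentially independent inputs: the split short exact sequence of homotopy groups produced by the preceding theorem, and the classical nontriviality of the index-difference on each of the two outer terms. First I would apply Theorem 6.1 of [BPR2] to obtain the section $s\colon \Riem^{\psc}(\beta X)\to \Riem^{\psc}(X_\Sigma)$ of $\mathrm{res}_\Sigma$, which furnishes the split short exact sequence forming the top row of the diagram. The fibre $\Riem^{\psc}(X_\Sigma)_{g_{\beta X,0}}$ is then identified with $\Riem^{\psc}(X)_{g_{\p X,0}}$ by observing that a well-adapted metric on $X_\Sigma$ whose Bockstein restriction is $g_{\beta X,0}$ is determined by its restriction to $X$, and this restriction must meet the submersion lift $g_{\p X,0}$ on $\p X$.

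Next I would activate the nontriviality of the index-difference on the two outer factors. For the right-hand factor, $\beta X$ is a closed simply connected spin manifold of dimension $n-\ell-1\geq 5$, so the Botvinnik--Ebert--Randal-Williams and Perlmutter theorem applies directly and yields that $\mathrm{inddiff}_{g_{\beta X,0}}\colon \pi_q\Riem^{\psc}(\beta X)\to KO_{q+(n-\ell-1)+1}=KO_{q+n-\ell}$ is nontrivial whenever the target is nontrivial. For the left-hand factor, $X$ is a simply connected spin manifold with boundary on which a psc-metric has been fixed; here one uses the boundary version of the theorem, proved by gluing $(X,g_{X,0})$ to an auxiliary psc-cap along $\p X$ and transferring the nontriviality from the closed case via standard Serre-fibration arguments (using the results of Chernysh and Ebert--Frenck invoked earlier). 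This gives that $\mathrm{inddiff}_{g_{X,0}}\colon \pi_q\Riem^{\psc}(X)_{g_{\p X,0}}\to KO_{q+n+1}$ is nontrivial whenever that target is nontrivial. The middle column $\mathrm{inddiff}_{g_0}$ is defined using the well-adapted Dirac operator on $X_\Sigma$ constructed in [BPR,BPR2].

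Commutativity of the two squares is then a matter of naturality of the index-difference construction. The left square commutes because the inclusion $j$ sends a loop of metrics on $X$ rel boundary to the same loop in $\Riem^{\psc}(X_\Sigma)$, and the well-adapted Dirac operator on $X_\Sigma$ restricts to the ordinary Dirac operator on $X$ along the canonical linear paths used to define the index-difference. For the right square, a loop in $\Riem^{\psc}(\beta X)$ lifted via $s$ produces a family of submersion metrics whose fibrewise Dirac operator on $L$ is invertible; this is where the hypothesis that $L$ is a simply connected homogeneous space of a compact semisimple Lie group enters, via a Lichnerowicz argument for the normal homogeneous metric $g_L$, so that the index of the total operator reduces to the index of the base-coupled operator and matches $\mathrm{inddiff}_{g_{\beta X,0}}$ after the expected dimensional shift. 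Once both squares commute, the splitting of the top row via $s_*$ splits the middle column as $KO_{q+n+1}\oplus KO_{q+n-\ell}$, and a five-lemma argument transfers the rational and torsion surjectivity from the two outer maps to the middle one. The principal obstacle I anticipate is verifying the right square: one must show cleanly that the well-adapted Dirac operator on $X_\Sigma$ decouples, in an index-theoretic sense, into an invertible fibrewise part plus a base part, so that the suspension/shift inherent in the index-difference on $X_\Sigma$ is compatible with the shift on $\beta X$; this is precisely the analytic content of [BPR2] that allows the factor $KO_{q+n-\ell}$ to appear in the diagram.
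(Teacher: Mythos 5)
You should first be aware that the paper does not prove this corollary at all: it is quoted verbatim from \cite[Corollary~6.7]{BPR} (just as the preceding split-exact-sequence theorem is quoted from \cite[Theorem~6.1]{BPR2}), so there is no in-paper argument to compare yours against. Judged on its own, your outline is the natural one and is consistent with how the cited source proceeds: the top row is exactly the split short exact sequence from the quoted theorem, the identification of the fibre $\Riem^{\psc}(X_\Sigma)_{g_{\beta X,0}}$ with $\Riem^{\psc}(X)_{g_{\p X,0}}$ is stated in the paper immediately before the theorem, the right-hand nontriviality follows from Botvinnik--Ebert--Randal-Williams/Perlmutter applied to the closed, simply connected spin manifold $\beta X$ of dimension $n-\ell-1\geq 5$, and the surjectivity conclusion is the standard diagram chase using the section $s_*$.

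Two places where your sketch is thinner than a proof should be flagged. First, the nontriviality of $\mathrm{inddiff}_{g_{X,0}}$ on $\pi_q\Riem^{\psc}(X)_{g_{\p X,0}}$ is not a formal consequence of the closed case: the manifold-with-boundary version of the index-difference theorem carries its own hypotheses (in \cite{BER-W} one needs, roughly, a right-stable or $2$-connected boundary condition, or one must first doubled/capped the manifold and then compare fibres of the restriction fibration), and ``gluing an auxiliary psc-cap'' presupposes that $\big(\p X, g_{\p X,0}\big)$ bounds compatibly; this step deserves a precise citation rather than a gesture at Serre fibrations. Second, the commutativity of the right-hand square --- i.e., that the well-adapted Dirac operator on $X_\Sigma$ has a $KO_{q+n-\ell}$-component computed by the Bockstein $\beta X$, with the fibrewise operator on $L$ invertible by Lichnerowicz for the homogeneous metric $g_L$ --- is the analytic heart of the statement. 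You correctly identify it as the main obstacle and defer it to \cite{BPR,BPR2}; since the corollary is itself imported from \cite{BPR}, that deferral is legitimate here, but it means your argument is a reduction to the cited index theory rather than an independent proof.
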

There are much more general results concerning the homotopy groups of
the space $\pi_q\mathcal{R}^{\psc} (X_\Sigma)$ if $X_{\Sigma}$ is not
simply-connected; see~\cite[Section 6]{BPR}.

\section{Appendix: cone metrics}
\subsection[Cone metric of (L,gL) when gL
 has constant non-negative scalar curvature]%
{Cone metric of $\boldsymbol{(L, g_{L})}$ when $\boldsymbol{g_L}$
 has constant non-negative \\scalar curvature}
 Let $L$ be smooth closed
manifold of dimension $l$ and $g_{L}$, a metric on $L$ with constant
non-negative scalar curvature. We~wish to extend $g_{L}$ to a scalar
flat metric on the cone $C(L)$. The resulting metric will be denoted
$g_{C(L)}$ and is constructed as follows. Consider the warped product
metric ${\rm d}t^{2}+\phi(t)^{2}g_{L}$ on the cylinder $(0,b)\times L$ for
some smooth function $\phi\colon [0,b]\rightarrow (0,\infty)$. The scalar
curvature, $s$, of such a metric is given by the formula
\begin{gather}\label{conewarp}
s = \frac{-4l}{(l+1)}\left(\frac{u''}{u}\right)+s_{g_L}u^{\frac{-4}{l+1}},
\end{gather}
where $l:=\dim{L}$, $u$ is the function satisfying
$u^{\frac{2}{l+1}}=\phi$ and $s_{g_L}$ is the scalar curvature of the
metric $g_L$.

Let us first consider the case when $s_{g_L}=0$. An easy calculation
shows $s=0$ precisely when
\begin{gather*}
u(t)=At+B,
\end{gather*} for various constants
$A,B$. Indeed, in the case when $A=0$, we obtain a Riemannian
cylinder. In our case, we set $A=1$, $B=0$ and $b=\frac{1}{2}$. This
gives rise to a metric
\begin{gather*}
{g_{C(L)}}={\rm d}t^{2}+ t^{\frac{4}{l+1}}g_{L},
\end{gather*}
on the cylinder
$\big[0,\frac{1}{2}\big]\times L$, which collapses at $t=0$. The result is a
scalar-flat metric (with a~singularity) on the cone $C(L)$ obtained
from $\big[0,\frac{1}{2}\big]\times L$ by collapsing $\{ 0 \}\times L$ to a
point.

Now consider the case that the constant $s_{g_L}>0$. In this case, we
set $\phi(t)=\frac{1}{c_L}t$, where the constant $c_L$
satisfies
\begin{gather*}
c_L=\sqrt{\frac{l(l-1)}{s_{g_L}}}.
\end{gather*}
Thus
$u=t^{\frac{l+1}{2}}$ and the resulting metric
\begin{gather*}
{g_{C(L)}}= {\rm d}t^{2}+\frac{1}{c_{L}^{2}}t^{2}g_{L},
\end{gather*}
has scalar curvature given by
\begin{gather}\label{scalarflatcone}
s_{g_{C(L)}}=\big(c_{L}^{2}s_{g_L}-l(l-1)\big)t^{-2} = 0,
\end{gather}
given our choice of $c_{L}$. Again, we may assume that
$t\in\big[0,\frac{1}{2}\big]$.
\begin{Remark}
Regarding the earlier setting when $g_L$ is scalar flat, we will
typically only be interested in the case when $L=S^{1}$ and so
$l=1$. Thus, metric formulae in both cases coincide.
\end{Remark}

\subsection{Attaching the cone to the cylinder}
In constructing certain metrics on $X_{\Sigma}$, it is necessary to
attach the above conical metric, ${g_{C(L)}}$, to a cylindrical metric
${\rm d}t^{2}+g_{L}$. This requires an intermediary ``attaching" metric to
transition smoothly between the two forms. To do this, we first
specify a smooth transition function,
$a\colon [0,1]\rightarrow\big[0,\frac{1}{2}\big]$, satisfying:
\begin{enumerate}\itemsep=0pt
\item[$(i)$] $a(t)=\frac{1}{2}+t$ when $t$ is near $0$,
\item[$(ii)$] $a(t)=1$ when $t$ is near $1$,
\item[$(iii)$] for all $t\in[0,1]$, $0\leq a'(t)\leq 1$ and $a''(t)\leq 0$.
\end{enumerate}
The existence of such functions is obvious. We~choose one and define
the attaching metric, ${g_{\att(L)}}$, on the cylinder $[0, 1]\times
L$ by the formula:
\begin{gather*}
 {g_{\att(L)}}=
\begin{cases}
 {\rm d}t^{2}+a^{\frac{4}{l+1}}(t)g_{L},& s_{g_L}=0,
 \\
 {\rm d}t^{2}+a^{2}(t)g_{L},& s_{g_L}>0.
\end{cases}
\end{gather*}
\begin{Proposition}\label{attachprop}
Attaching $\big(\big[0, \frac{1}{2}\big]\times L, g_{C(L)}\big)$ to $\big([0, 1]\times L,
g_{\att(L)}\big)$ by identifying $\big\{\frac{1}{2}\big\}\times L$ in~the former
with $\{0\}\times L $ in the latter determines a smooth metric of
non-negative scalar curvature $g_{\att(L)}\cup g_{C(L)}$.
\end{Proposition}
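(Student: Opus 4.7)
The plan is to verify two claims separately: smoothness of the glued metric across the interface $\{1/2\}\times L = \{0\}\times L$, and non-negativity of its scalar curvature. Both reduce to short computations with the warped product formula \eqref{conewarp}.

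For smoothness, the two pieces share the orthogonal $dt^{2}$ factor, so only the warping factor of $g_{L}$ has to be checked. On the cone side, for the radial parameter near $1/2$, this factor is $t^{4/(l+1)}$ if $s_{g_L}=0$ and $t^{2}$ if $s_{g_L}>0$ (under the normalization $c_{L}=1$, i.e., $s_{g_L}=l(l-1)$, which is the convention of Section 1.3.2). On the attaching side, for the parameter near $0$, condition (i) gives $a(s)=1/2+s$, so the warping factor equals $(1/2+s)^{4/(l+1)}$ or $(1/2+s)^{2}$ respectively. Under the identification of the interface via $\tau = 1/2 + s$ these are literally the same smooth function of $\tau$, so the combined warping factor is $C^{\infty}$ across the seam and the glued metric is smooth.

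For the scalar curvature bound, apply \eqref{conewarp} on each piece. The cone piece is scalar-flat by the very construction recalled just above. On the attaching piece, let $u$ be the function with $u^{2/(l+1)} = \phi$, so $u = a$ when $s_{g_L}=0$ and $u = a^{(l+1)/2}$ when $s_{g_L}>0$. Expanding $u''/u$ by the chain rule and substituting into \eqref{conewarp} gives
\begin{gather*}
s = -\frac{4l}{l+1}\,\frac{a''}{a} \quad \text{if } s_{g_L}=0,
\\
s = \frac{l(l-1)}{a^{2}}\bigl(1-(a')^{2}\bigr) - \frac{2l\,a''}{a} \quad \text{if } s_{g_L}=l(l-1),
\end{gather*}
and both expressions are non-negative because $a > 0$, $a'' \le 0$, and $0 \le a' \le 1$ from condition (iii). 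The only genuine subtlety sits in the second case: the negative term $-l(l-1)(a'/a)^{2}$ coming from the first derivative of the warping function is exactly what the normalization $s_{g_L} = l(l-1)$ compensates via the positive fibre contribution $s_{g_L}/a^{2}$; once this cancellation is exhibited, the remaining $-2l\,a''/a$ is non-negative by the concavity of $a$, and the proposition follows.
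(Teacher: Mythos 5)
Your proof is correct and follows the same route as the paper's, which simply asserts that smoothness follows from the properties of $a$ and that non-negativity of the scalar curvature of $g_{\att(L)}$ is "easily seen" from \eqref{conewarp}; you have carried out the computations the paper leaves implicit, and your formulas for $s$ in both cases check out. Your observation that the interface only matches under the normalization $c_L=1$ (i.e., $s_{g_L}=\ell(\ell-1)$) is a legitimate point the paper glosses over, but it does not change the substance of the argument.
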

\begin{proof}
Smoothness is immediate from the properties of the transition function
$a$. The component, $g_{C(L)}$, has zero scalar curvature and that
$g_{\att(L)}$ has non-negative scalar curvature is easily seen from
the formula (\ref{conewarp}).
\end{proof}

\subsection*{Acknowledgments}
BB was partially supported by Simons collaboration grant 708183. We~thank the referees for their careful reading and thoughtful
comments. The second author would like to thank David Wraith for some
productive conversations.

\pdfbookmark[1]{References}{ref}
\LastPageEnding


\begin{thebibliography}{99}
\footnotesize\itemsep=0pt

\bibitem{B-B}
B\'erard-Bergery L., Scalar curvature and isometry group, in Spectra of
 Riemannian Manifolds, Kaigai Publications, Tokyo, 1983, 9--28.

\bibitem{Besse}
Besse A.L., Einstein manifolds, \textit{Ergebnisse der Mathematik und ihrer
 Grenzgebiete~(3)}, Vol.~10, \href{https://doi.org/10.1007/978-3-540-74311-8}{Springer-Verlag}, Berlin, 1987.

\bibitem{B1}
Botvinnik B., Manifolds with singularities accepting a metric of positive
 scalar curvature, \href{https://doi.org/10.2140/gt.2001.5.683}{\textit{Geom. Topol.}} \textbf{5} (2001), 683--718,
 \href{https://arxiv.org/abs/math.DG/9910177}{arXiv:math.DG/9910177}.

\bibitem{BER-W}
Botvinnik B., Ebert J., Randal-Williams O., Infinite loop spaces and positive
 scalar curvature, \href{https://doi.org/10.1007/s00222-017-0719-3}{\textit{Invent. Math.}} \textbf{209} (2017), 749--835,
 \href{https://arxiv.org/abs/1411.7408}{arXiv:1411.7408}.

\bibitem{BPR}
Botvinnik B., Piazza P., Rosenberg J., Positive scalar curvature on simply
 connected spin pseudomanifolds, \href{https://arxiv.org/abs/1908.04420}{arXiv:1908.04420}.

\bibitem{BPR2}
Botvinnik B., Piazza P., Rosenberg J., Positive scalar curvature on spin
 pseudomanifolds: the fundamental group and secondary invariants,
 \href{https://arxiv.org/abs/2005.02744}{arXiv:2005.02744}.

\bibitem{BR}
Botvinnik B., Rosenberg J., Positive scalar curvature on manifolds with fibered
 singularities, \href{https://arxiv.org/abs/1808.06007}{arXiv:1808.06007}.

\bibitem{Ch}
Chernysh V., On the homotopy type of the space $\mathcal{R}^+(M)$,
 \href{https://arxiv.org/abs/math.GT/0405235}{arXiv:math.GT/0405235}.

\bibitem{EF}
Ebert J., Frenck G., The Gromov--Lawson--Chernysh surgery theorem,
 \href{https://arxiv.org/abs/1807.06311}{arXiv:1807.06311}.

\bibitem{E-RW2}
Ebert J., Randal-Williams O., Infinite loop spaces and positive scalar
 curvature in the presence of a fundamental group, \href{https://doi.org/10.2140/gt.2019.23.1549}{\textit{Geom. Topol.}}
 \textbf{23} (2019), 1549--1610, \href{https://arxiv.org/abs/1711.11363}{arXiv:1711.11363}.

\bibitem{E-RW3}
Ebert J., Randal-Williams O., The positive scalar curvature cobordism category,
 \href{https://arxiv.org/abs/1904.12951}{arXiv:1904.12951}.

\bibitem{GL}
Gromov M., Lawson Jr. H.B., The classification of simply connected manifolds of
 positive scalar curvature, \href{https://doi.org/10.2307/1971103}{\textit{Ann. of Math.}} \textbf{111} (1980),
 423--434.

\bibitem{P2}
Perlmutter N., Cobordism categories and parametrized Morse theory,
 \href{https://arxiv.org/abs/1703.01047}{arXiv:1703.01047}.

\bibitem{P1}
Perlmutter N., Parametrized Morse theory and positive scalar curvature,
 \href{https://arxiv.org/abs/1705.02754}{arXiv:1705.02754}.

\bibitem{St}
Stolz S., Simply connected manifolds of positive scalar curvature, \href{https://doi.org/10.2307/2946598}{\textit{Ann.
 of Math.}} \textbf{136} (1992), 511--540.

\bibitem{TW}
Tuschmann W., Wraith D.J., Moduli spaces of {R}iemannian metrics,
 \textit{Oberwolfach Seminars}, Vol.~46, \href{https://doi.org/10.1007/978-3-0348-0948-1}{Birkh\"auser Verlag}, Basel, 2015.

\bibitem{W0}
Walsh M., Metrics of positive scalar curvature and generalised {M}orse
 functions, {P}art~{I}, \href{https://doi.org/10.1090/S0065-9266-10-00622-8}{\textit{Mem. Amer. Math. Soc.}} \textbf{209} (2011),
 xviii+80~pages, \href{https://arxiv.org/abs/0811.1245}{arXiv:0811.1245}.

\bibitem{W1}
Walsh M., Cobordism invariance of the homotopy type of the space of positive
 scalar curvature metrics, \href{https://doi.org/10.1090/S0002-9939-2013-11647-3}{\textit{Proc. Amer. Math. Soc.}} \textbf{141}
 (2013), 2475--2484, \href{https://arxiv.org/abs/1109.6878}{arXiv:1109.6878}.

\bibitem{W3}
Walsh M., {$H$}-spaces, loop spaces and the space of positive scalar curvature
 metrics on the sphere, \href{https://doi.org/10.2140/gt.2014.18.2189}{\textit{Geom. Topol.}} \textbf{18} (2014), 2189--2243,
 \href{https://arxiv.org/abs/1301.5670}{arXiv:1301.5670}.

\bibitem{W2}
Walsh M., The space of positive scalar curvature metrics on a manifold with
 boundary, \textit{New York~J. Math.} \textbf{26} (2020), 853--930,
 \href{https://arxiv.org/abs/1411.2423}{arXiv:1411.2423}.

\end{thebibliography}
\end{document}